\tikzset{bluenode/.style={circle,fill=blue!50,minimum size=0.4cm,inner sep=0pt},}
\numberwithin{equation}{section}
\title[Random geometric complexes and graphs]{Random geometric complexes and graphs on Riemannian manifolds in the thermodynamic limit}
\date{}
\author{Antonio Lerario}
\address{SISSA, Trieste, Italy}
\email{lerario@sissa.it}
\author{Raffaella Mulas}
\address{Max Planck Institute for Mathematics in the Sciences, Leipzig, Germany}
\email{raffaella.mulas@mis.mpg.de}
\newcommand{\bracenom}{\genfrac{\lbrace}{\rbrace}{0pt}{}}
\newcommand*\diff{\mathop{}\!\mathrm{d}}
\newcommand{\starabove}{\overset{*}}
\newcommand{\be}{\begin{equation}}
\newcommand{\ee}{\end{equation}}
\newcommand{\R}{\mathbb{R}}
\DeclareMathOperator{\vol}{vol}
\DeclareMathOperator{\id}{id}
\DeclareMathOperator{\supp}{supp}
\DeclareMathOperator{\spec}{spec}
\DeclareMathOperator{\Sym}{Sym}
\DeclareMathOperator{\tv}{tv}
\DeclareMathOperator{\diag}{diag}
\theoremstyle{plain}
\newtheorem{theorem}{Theorem}[section]
\newtheorem{lem}[theorem]{Lemma}
\newtheorem{cor}[theorem]{Corollary}
\newtheorem{prop}[theorem]{Proposition}
\theoremstyle{definition}
\newtheorem{definition}[theorem]{Definition}
\newtheorem{ex}[theorem]{Example}
\theoremstyle{remark}
\newtheorem{rmk}[theorem]{Remark}
\tikzset{main node/.style={circle,fill=orange!20,draw,minimum size=1cm,inner sep=0pt},}
\begin{document}
	\maketitle
	\begin{abstract} We investigate some topological properties of random geometric complexes and random geometric graphs on Riemannian manifolds in the thermodynamic limit. In particular, for random geometric complexes we prove that the normalized counting measure of connected components, counted according to isotopy type, converges in probability to a deterministic measure. More generally, we also prove similar convergence results for the counting measure of types of components of each $k$--skeleton of a random geometric complex. As a consequence, in the case of the $1$--skeleton (i.e. for random geometric graphs) we show that the empirical spectral measure associated to the normalized Laplace operator converges to a deterministic measure. 
	\end{abstract}
	\section{Introduction}
	\subsection{Random geometric complexes}The subject of random geometric complexes has recently attracted a lot of attention, with a special focus on the study of expectation of topological properties of these complexes \cite{Kahle2011, Penrose, NSW, YSA, BobrowskiMukherjee, BobrowskiKahle}\footnote{This list is by no mean complete, see \cite{BobrowskiKahle} for a survey and a more complete set of references!} (e.g. number of connected components, or more generally Betti numbers). In a recent paper \cite{Antonio}, Auffinger, Lerario and Lundberg have imported methods from \cite{NazarovSodin, SarnakWigman} for the study of finer properties of these random complexes, namely the distribution of the homotopy types of the connected components of the complex. Before moving to the content of the current paper, we discuss the main ideas from \cite{Antonio} and introduce some terminology.
	
	Let $(M,g)$ be a compact, Riemannian manifold of dimension $m$. We normalize the metric $g$ in such a way that
	\be \mathrm{vol}(M)=1.\ee
	We denote by $\hat{B}(x,r)\subset M$ the Riemannian ball centered at $x$ of radius $r>0$ and we construct a \emph{random $M$--geometric complex} in the \emph{thermodynamic regime} as follows. We let $\{p_1,\ldots,p_n\}$ be a set of points independently sampled from the uniform distribution on $M$, we fix a positive number $\alpha>0$, and we consider:
		\be\label{eq:alpha}
		\mathcal{U}_n:=\bigcup_{k=1}^n\hat{B}(p_k,r)\quad \textrm{where}\quad  r:=\alpha n^{-1/m}.\ee
		The choice of such $r$ is what defines the so-called \emph{critical} or \emph{thermodynamic regime}\footnote{Quoting the Introduction from \cite{Antonio}: random geometric complexes are studied \emph{within three main phases or regimes based on the relation between density of points and radius of the neighborhoods determining the complex: the subcritical regime (or ``dust phase'') where there are many connected components with little topology, the critical regime (or ``thermodynamic regime'') where topology is the richest (and where the percolation threshold appears), and the supercritical regime where the connectivity threshold appears.
The thermodynamic regime is seen to have the most intricate topology.}}  and it's the regime where topology is the richest \cite{Antonio,Kahle}. We say that $\mathcal{U}_n$ is a \emph{random $M$--geometric complex} and the name is motivated by the fact that, for $n$ large enough, $\mathcal{U}_n$ is homotopy equivalent to its \v{C}ech complex, as we shall see in Lemma \ref{nervelemma} below.
		
		Auffinger, Lerario and Lundberg \cite{Antonio} proved that, in the case when $\vol M=1$, the normalized counting measure of connected components of such complexes, counted according to homotopy type, converges in probability to a deterministic measure. That is,
		\begin{equation}\label{equation:oldconvergence}
		    \tilde{\Theta}_n:=\frac{1}{b_0(\mathcal{U}_n)}\sum\delta_{[u]}\xrightarrow[n\rightarrow\infty]{\mathbb{P}}\tilde{\Theta},
		\end{equation}where the sum is over all connected components $u$ of $\mathcal{U}_n$, $[u]$ denotes their homotopy type and $b_0$ is the zero-th Betti number, therefore $b_0(\mathcal{U}_n)$ is the number of connected components of $\mathcal{U}_n$. In \eqref{equation:oldconvergence} the measure $\tilde{\Theta}_n$ is a \emph{random} probability measure on the countable set of all possible homotopy types of connected geometric complexes and the convergence is in probability with respect to the total variation distance (see Section \ref{section:Random measures} for more precise definitions). The support of the limiting deterministic measure $\tilde{\Theta}$ equals the set of all homotopy types for Euclidean geometric complexes of dimension $m = \dim M$. Roughly speaking, \eqref{equation:oldconvergence} tells that, for every fixed homotopy type  $[u]$ of connected geometric complexes, denoting by $\mathcal{N}_n([u])$ the random variable ``number of connected components of $\mathcal{U}_n$ which are in the homotopy equivalence class $[u]$'', there is a convergence of the random variable $\mathcal{N}_n([u])/b_0(\mathcal{U}_n)$ to a constant $c_{[u]}$ as $n\to \infty$ (the convergence is in $L^1$ and $c_{[u]}>0$ if and  only if $[u]$ contains a $\R^m$--geometric complex).  
		
		\subsection{Isotopy classes of geometric complexes}
		\begin{figure}
		\begin{center}
			\includegraphics[width=5cm]{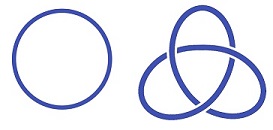}
		\end{center}
		\caption{The unknot and the trefoil knot are homotopy equivalent but they are not isotopic.}\label{Figureknots}
	\end{figure}We now move to the content of the current paper. Our first goal is to include the results of \cite{Antonio} into a more general framework which allows to make even more refined counts (e.g. according to the type of the embedding of the components, or on the structure of their skeleta, or on the property of containing a given motif\footnote{A \emph{motif} in a graph (or more generally in a complex) is a recurrent and statistically significant sub-graph or pattern.}). The first result that we prove is that (\ref{equation:oldconvergence}) still holds if we consider \emph{isotopy classes} instead of homotopy classes: intuitively, two complexes are isotopic if the vertices of one can be moved continuously to the vertices of the other without ever changing the combinatorics of the intersection of the corresponding balls (see Definition \ref{def:isotopy}). From now on we will always make the assumption that our complexes are \emph{nondegenerate}, i.e. that the boundaries of the balls defining them intersect transversely (see Definition \ref{def:nondeg}); our random geometric complexes will be nondegenerate with probability one, and the notion of isotopic nondegenerate complexes coincides with the one from differential topology.
		 In Theorem \ref{mainthm} we show that
	\begin{equation}\label{equation:newconvergence}
	    \Theta_n\xrightarrow[n\rightarrow \infty]{\mathbb{P}}\Theta,
	\end{equation}where $\Theta_n$ is defined in a similar way as $\tilde{\Theta}_n$ above, with isotopy classes instead of homotopy classes Interestingly, the limiting measure depends only on $\alpha$ on the dimension of $M$.\footnote{In the rest of the paper we will consider $\alpha$ as fixed from the very beginning and omit it from the notation; the study of the dependence of the various objects on $\alpha>0$ is  an interesting problem, on which for now we cannot say much.}. 
To appreciate the difference with the results from \cite{Antonio}: the unknot and the trefoil knot in $\R^3$ (Figure \ref{Figureknots}) are homotopy equivalent but they are not isotopic, and with positive probability there are connected $M$--geometric complexes whose embedding looks like these two knots (see Proposition \ref{propexistence} below); Theorem \ref{mainthm} is able to distinguish between them, whereas the construction from \cite{Antonio} is not.

\subsection{A cascade of measures}Theorem \ref{mainthm} contains in a sense the richest possible information on the topological structure of our geometric complexes and the convergence of many other counting measures can be deduced from it. To explain this idea, we consider the space
\be \left(\mathcal{G}/{\cong}\right):=\{\textrm{isotopy classes of connected geometric complexes}\}\ee
and we put an equivalence relation $\rho$ on $\mathcal{G}/{\cong}$ (the relation can be for example: two isotopy classes are the same if their $k$--skeleta are isomorphic, or if they contain the same number of a given motif). Then the natural map $\psi:\left(\mathcal{G}/{\cong}\right)\to \left(\mathcal{G}/{\cong}\right)/{\rho}$ defines the random pushforward measure $\psi_*\Theta_n$ on $\left(\mathcal{G}/{\cong}\right)/{\rho}$ and Theorem \ref{mainthm} implies that $\psi_*\Theta_n\to \psi_*\Theta$.

This idea can be used to produce a ``cascade'' of random relevant measures. Consider in fact the following diagram of maps and spaces:
\be 
\begin{tikzcd}
\mathcal{G}/{\cong} \arrow[r, "\varphi"] \arrow[rd, "\varphi^{(k)}"'] \arrow[rr, "\phi", bend left=49] & \mathcal{G}/{\simeq} \arrow[d] \arrow[r] & \mathcal{G}/{\sim} \\
                                                                                                       & \mathcal{G}^{(k)}/{\simeq}               &                   \end{tikzcd}\ee
where the spaces are:
\begin{align} \left(\mathcal{G}/{\simeq}\right)&:=\{\textrm{isomorphism classes of connected geometric \v{C}ech complexes}\}\\
\left(\mathcal{G}^{(k)}/{\simeq}\right)&:=\{\textrm{isomorphism classes of components of the $k$--skeleton of \v{C}ech complexes}\}\\
\left(\mathcal{G}/{\sim}\right)&:=\{\textrm{homotopy classes of connected geometric complexes}\}\end{align}
and the maps are the natural ``forgetful'' maps. For example, the map $\varphi$ takes the isotopy class of a nondegenerate complex and associates to it its homotopy class; the map $\varphi^{(k)}$ associates to it the isomorphism class of its $k$--skeleton (it is well defined since isotopic complexs have isomorphic \v{C}ech complexes).
Then for all the pushforward measures defined by these maps we have convergence in probability with respect to the total variation distance (see Section \ref{section:Random measures}) and as $n\to \infty$
\be \phi_*\Theta_n\to\phi_*\Theta,\quad  \varphi_*\Theta_n\to\varphi_*\Theta\quad \textrm{and}\quad \varphi^{(k)}_*\Theta_n\to\varphi^{(k)}_*\Theta.\ee

\subsection{Random Geometric Graphs}

Of special interest is the case of random geometric graphs: vertices of a random $M$--geometric graph $\Gamma_n$ are the points $\{p_1,\ldots,p_n\}$ and we put an edge between $p_i$ and $p_j$ if and only if $i\neq j$ and $\hat{B}(p_i, r)\cap\hat{B}(p_j, r)\neq \emptyset$. Using the above language, a random $M$--geometric graph is the $1$--skeleton of the \v{C}ech complex associated to the complex $\mathcal{U}_n$.

	To every random $M$--geometric graph $\Gamma_n$ we can associate the measure:
\be \label{eq:graphsintro}\varphi^{(1)}_{*}\Theta_n= \frac{1}{b_0(\Gamma_n)}\sum\delta_{\gamma}\to \varphi^{(1)}_*\Theta,\ee
where the sum is over all connected components of $\Gamma_n$ and $\gamma$ denotes their isomorphism class (as graphs). There is an interesting fact regarding the random variable $b_0(\Gamma_n)$ appearing in \eqref{eq:graphsintro}: it is the same random variable as $b_0(\mathcal{U}_n)$ (the number of components of the random graph and of the random complex are the same), and in \cite{Antonio} it is proved that there exists a constant $\beta$ (depending on the parameter $\alpha$ in \eqref{eq:alpha}) such that:
\be
		\frac{b_0(\Gamma_n)}{n}=\frac{b_0(\mathcal{U}_n)}{n}\xrightarrow{L^1}\beta.
\ee

The existence of this limit also follows from \cite{GoTrTs}, where the authors establish a limit law in the thermodynamic regime for Betti numbers
of random geometric complexes built over possibly inhomogeneous Poisson point processes in Euclidean space, including the case when the point process is supported on a submanifold.
Moreover, we note that for a related model of random graphs (the Poisson model on $\R^m$, see Section \ref{sec:poissonintro} below) Penrose \cite{Penrose} has proved that there exists a constant $\beta$ (depending on the parameter $\alpha$ in \eqref{eq:alpha}) such that the normalized component count converges to a constant in $L^2$. In fact, as we will see below, related to our $M$--geometric model there is a way to construct a corresponding $\R^m$--geometric model, which is in a sense the rescaled limit of the Riemannian one, and the limit constants for the two models are the same.

In fact the limit measure $\varphi^{(1)}_{*}\Theta$ also comes from the rescaled Euclidean limit and it is supported on connected $\R^m$--geometric graphs. For a given $m$, the set of such graphs is not easy to describe, but in the case $m=1$ they can be characterized by a result of Roberts \cite{unitgraphs2}, and from this result we can deduce a description of the support of the limit measure in \eqref{eq:graphsintro} (see Corollary \ref{cor:limit1} and Section \ref{section:Geometric graphs} for more details).
\begin{rmk}[Related work on random geometric graphs]
The general theory of random graphs has been founded in 1959 by Erd\"os and Rényi, who proposed a model of random graph $G(n,p)$ where the number of vertices is fixed to be $n$ and each pair of distinct vertices is joined by an edge with probability $p$, independently of other edges \cite{erdos1,erdos2,erdos3,erdos4,referee3}. Later on, other models have been proposed in the literature \cite{referee2}, as for instance the Barabási–Albert scale-free network model \cite{referee2-ref2} and the Watts-Strogatz small-world network model \cite{referee2-ref53}. For general references on random graphs, the reader is referred to \cite{referee3-ref10,Chung,chung-random,referee3-ref16,referee3-ref18,referee3-ref23,referee3-ref30,referee3-ref34,referee3-ref40}.	Here we focus on the random geometric graph model and we refer to \cite{referee2-ref23,referee2-ref42,referee2-ref50} for more literature on this topic. Applications of random geometric graphs can be found, for instance, in the contexts of wireless networks, epidemic spreading, city growth, power grids, protein-protein interaction networks \cite{referee2}. \end{rmk}
\subsection{The spectrum of a random geometric graph}
When talking about a graph, a natural associated object to look at is its normalized Laplace operator, see Section \ref{section:Laplacian of a graph and its spectrum}. It is known that the spectrum of the (symmetric) normalized Laplace operator for graphs encodes important information about the graphs \cite{Chung}. For example, it tells us how many connected components a graph has; it tells whether a graph is bipartite and whether it is complete; it tells us how difficult it is to partition the vertex set of a graph into two disjoint sets $V_1$ and $V_2$ such that the number of edges between $V_1$ and $V_2$ is as small as possible and such that the \emph{volume} of both $V_1$ and $V_2$, i.e. the sum of the degrees of their vertices, is as big as possible. Therefore, the normalized Laplace operator gives a partition of graphs into families and \emph{isospectral graphs} share important common features. Since, furthermore, the computation of the eigenvalues can be performed with tools from linear algebra, such operator is a very powerful and used tool in graph theory and data analytics.

In the context of random $M$--geometric graphs, the convergence of the counting measure in \eqref{eq:graphsintro} can be used to deduce the existence of a limit measure for the spectrum of the normalized Laplace operator for random geometric graphs. More specifically, we define the \emph{empirical spectral measure} of a graph as the normalized counting measure of eigenvalues of the normalized Laplace operator and we prove that there exists a deterministic measure $\mu$ on the real line such that (Theorem \ref{teomu})
\begin{equation}\label{equation:mu}
\mu_{\Gamma_n}:=\frac{1}{n}\sum_{i=1}^{n}\delta_{\lambda_i(\Gamma_n)}\overset{*}{\underset{n\rightarrow\infty}{\rightharpoonup}}\mu.
\end{equation}Here, $\lambda_1(\Gamma_n),\ldots,\lambda_n(\Gamma_n)$ are the eigenvalues of the normalized Laplace operator of $\Gamma_n$ and the convergence in \eqref{equation:mu} means that for every continuous function $f:[0,2]\to \R$ we have:
\be \lim_{n\to \infty}\mathbb{E}\int_{[0,2]}f d\mu_{\Gamma_n}=\int_{[0,2]} f d\mu.\ee
The measure $\mu$ in \eqref{equation:mu} is far from trivial and we don't have yet a clear understanding of it: we know it is supported on the interval $[0,2]$, but for example it is not absolutely continuous with respect to Lebesgue measure (in fact $\mu(\{0\})=\beta>0$).

\begin{rmk}Interestingly, \cite{referee3} studies the convergence of $\mu_{\Gamma_n}$ as $n\rightarrow\infty$ in the case where $\Gamma_n$ is a $G(n,p)$ random graphs and the eigenvalues are the ones of the non-normalized Laplacian or the ones of the adjacency matrix. In particular, it is shown that in such context, under suitable conditions, $\mu_{\Gamma_n}$ converges to the semi-circle law if associated to the adjacency matrix and it converges to the free convolution of the standard normal distribution if associated to the non-normalized Laplacian.\end{rmk}
\begin{rmk}\label{rmk:JJ}In \cite{JJ}, Gu, Jost, Liu and Stadler introduce the notion of \emph{spectral class} of a family of graphs. Given a Radon measure $\rho $ on $[0,2]$ and a sequence $(\Gamma_n)_{n\in \mathbb{N}}$ of graphs with $\#(V(\Gamma_n))=n$, they say that this sequence belongs to the spectral class $\rho$ if $\mu_{\Gamma_n}\starabove \rightharpoonup \rho$ as $n\to \infty$. We can interpret \eqref{equation:mu} as saying that our family of random geometric graphs $(\Gamma_n)_n$ belongs to the spectral class $\mu$ (in a probabilistic sense).
		\end{rmk}
		
		\begin{rmk}[Related work on spectral theory]Similarly to the spectrum of the normalized Laplace operator, also the spectra of the non-normalized Laplacian matrix (defined in Section \ref{section:Laplacian of a graph and its spectrum}) and the one of the adjacency matrix have been widely studied. We refer the reader to \cite{Chung,referee3-ref42} for general references on spectral graph theory. We refer to \cite{referee3-ref9,Hypergraphs} for applications of spectral graph theory in chemistry and we refer to \cite{referee3-ref24,referee3-ref25,referee3-ref26,referee3-ref38,referee3-ref39,referee3-ref43,referee3-ref44} for applications in theoretical physics and quantum mechanics. For references on spectral graph theory of (not necessarily geometric) random graphs, we refer to \cite{chung-random,referee3,referee2,referee4}. In \cite{referee2}, in particular, the eigenvalues of the adjacency matrix for random gometric graphs are studied using numerical and statistical methods. Remarkably, it is shown that random geometrix graphs are statistically very similar to the other random graph models we have mentioned above: Erdős-Rényi random graphs, Barabási–Albert scale-free networks, Watts-Strogatz small-world networks. On the other hand, in \cite{referee4}, it is shown that symmetric structures abundantly occur in random geometric graphs, while the same doesn't hold for the other random graph models. Our main results on spectral graph theory for random geometric graphs, Theorem \ref{teomu} and Proposition \ref{prop:last} below, follow the same general idea as \cite{referee2} and \cite{referee4}, in the sense that we are interested on the limiting spectrum of large random geometric graphs. The main difference is that \cite{referee2} is focused on the adjacency matrix, \cite{referee4} gives a focus on the non-normalized Laplacian and we focus on the normalized Laplacian. Therefore the final implications differ very much.
\end{rmk}
\subsection{The Euclidean Poisson model}\label{sec:poissonintro} As we already observed, in \cite{Antonio}, the proof of (\ref{equation:oldconvergence}) is based on a \emph{rescaling limit} idea. Namely, one can fix $R>0$ and a point $p\in M$, and study the limit structure of the random complexes inside the ball $\hat{B}(p,Rn^{-1/d})$. The random geometric complex obtained as $n\rightarrow\infty$ can then be described as follows. Let $P:=\{p_1,p_2,\ldots\}$ be a set of points sampled from the standard spatial Poisson distribution in $\mathbb{R}^m$. For $\alpha>0$, let
		\begin{equation*}
		\mathcal{P}:=\bigcup_{p\in P}B(p,\alpha)
		\end{equation*}and let
		\begin{equation*}
		\mathcal{P}_R:=\{\text{connected components of $\mathcal{P}$ entirely contained in the interior of }B(0,R)\}.
		\end{equation*}For the random complex $\mathcal{P}_R$, one can define completely analogue measures, where now the parameter is $R>0$, and all the above discussion applies also to this model (this is discussed throughout the paper). 

\subsection*{Structure of the paper}This paper is structured as follows. In Section \ref{section:Geometric complexes} we discuss (deterministic) $M$--geometric complexes and, in particular, we define and see some properties of the set $\mathcal{G}/{\cong}$ of isotopy classes of connected, nondegenerate $M$--geometric complexes. In Section \ref{section: Random geometric complexes} we discuss random $M$--geometric complexes; in Section \ref{section:Random measures} we prove (\ref{equation:newconvergence}). Morevorer, in Section \ref{section:Geometric graphs} we define and see some properties of geometric graphs; in Section \ref{section:Laplacian of a graph and its spectrum} we recall the definition of the normalized Laplace operator $\hat{L}$ for graphs and we prove some properties of the spectral measure in the case of geometric graphs. Finally, in Section \ref{section:Random geometric graphs} we prove (\ref{equation:mu}).
\subsection*{Acknowledgements}
We are grateful to Bernd Sturmfels, because without him this paper would not exist. We are grateful to Fabio Cavalletti, J\"urgen Jost, Matthew Kahle, Erik Lundberg, Leo Mathis and Michele Stecconi for helpful comments, discussions and forbidden graphs. We are grateful to the anonymous referees for the constructive comments. 

\section{Geometric complexes}\label{section:Geometric complexes}
Throughout this paper we fix a Riemannian manifold $(M,g)$ of dimension $m$. 
	\begin{definition}[$M$--geometric complex and its skeleta]Let $p_1,\ldots,p_n$ be points in $M$ and fix $r\geq 0$. We define a \emph{$M$--geometric complex} as
		\begin{align*}
		\mathcal{U}(\{p_1,\ldots,p_n\},r):&=\bigcup_{k=1}^n\hat{B}(p_k,r)\\
		&=\{x\in M: d_M(x,\{p_1,\ldots,p_n\})\leq r\}.
		\end{align*}For $\mathcal{U}:=\mathcal{U}(\{p_1,\ldots,p_n\},r)$, we also let
		\begin{align*}
		\check{C}(\mathcal{U})&:=\check{C}(\{p_1,\ldots,p_n\},r)\\&:=\text{ nerve of the cover }\{\hat{B}(p_k,r)\}_{k=1}^n
		\end{align*}and we let
		\begin{align*}
		\check{C}^{(k)}(\mathcal{U})&:=\check{C}^{(k)}(\{p_1,\ldots,p_n\},r)\\
		&:=k-\text{skeleton of }\check{C}(\{p_1,\ldots,p_n\},r).
		\end{align*}In particular, we call $\check{C}^{(1)}(\{p_1,\ldots,p_n\},r)$ a \emph{$M$--geometric graph}. 
	\end{definition}
		\begin{rmk}\label{remark:smooth} In order to avoid unnecessary complications, in the sequel we will always assume that the injectivity radius\footnote{Recall that the injectivity radius $\textrm{inj}_p(M)$ of $M$ at one point $p$ is defined to be the largest radius of a ball in the tangent space $T_pM$ on which the exponential map $\textrm{exp}_p:T_{p}M\to M$ is a diffeomorphism and the injectivity radius of $M$ is defined as the infimum of the injectivity radii at all points: 
		\be \textrm{inj}(M)=\inf_{p\in M}\textrm{inj}_p(M).
		\ee}
		$\textrm{inj}(M)$ of $M$ is strictly positive (which is true if $M$ is compact or if $M=\R^m$ with the flat metric) and that \be \label{eq:inj(M)}0<r\leq\textrm{inj}(M).\ee 
		This requirement ensures that for every point $p\in M$ the set 
		\be \partial\hat{B}(p,r)=\{x\in M:d(x,p_k)=r\}\ee
		is smooth (in fact it is the image of the sphere of radius $r$ in the tangent space at $p$ under the exponential map, which is a diffeomorphism on $B_{T_pM}(0, \textrm{inj}(M))$).
		Observe also that for $r\leq \textrm{inj}(M)$ the ball $\widehat{B}(p,r)$ is contractible, but not necessarily geodesically convex. 
	\end{rmk}
	\begin{definition}\label{def:nondeg}
		We say that $\mathcal{U}(\{p_1,\ldots,p_n\},r)$ is \emph{nondegenerate} if for each $J=\{j_1,\ldots,j_l\}\in\bracenom{n}{l}$ the intersection \be\label{eq:transv} \bigcap_{k=1}^l\hat{B}(p_{j_k},r)\quad \textrm{is transversal}.\ee
	\end{definition}
The next result is classical and relates the homotopy of a geometric complex to the one of its associated \v{C}ech complex.
	\begin{lem}[Nerve Lemma]\label{nervelemma}
		If $M$ is compact, there exists $\rho>0$ such that, for each $r\leq \rho$,
		\begin{equation*}
		\mathcal{U}(\{p_1,\ldots,p_n\},r)\sim \check{C}(\{p_1,\ldots,p_n\},r),
		\end{equation*}i.e. they are homotopy equivalent. 
	\end{lem}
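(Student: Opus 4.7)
The plan is to reduce the statement to the classical Leray--Borsuk Nerve Lemma, which says that if a paracompact space admits a finite cover $\{V_k\}$ all of whose non-empty finite intersections are contractible (a \emph{good} cover), then the union $\bigcup_k V_k$ is homotopy equivalent to the nerve of the cover. Hence the real task is to find $\rho>0$ such that, for any $r \leq \rho$ and any choice of points $p_1,\ldots,p_n \in M$, the closed balls $\{\hat{B}(p_k, r)\}_{k=1}^n$ form a good cover of $\mathcal{U}(\{p_1,\ldots,p_n\},r)$.

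The key geometric input is the existence, on any compact Riemannian manifold, of a \emph{convexity radius} $\rho_{\mathrm{conv}}(M) > 0$: for every $p \in M$ and every $r \leq \rho_{\mathrm{conv}}(M)$, the closed geodesic ball $\hat{B}(p,r)$ is strongly geodesically convex, meaning that any two of its points are joined by a unique length-minimizing geodesic which remains inside $\hat{B}(p,r)$. I would set
\begin{equation*}
\rho := \min\bigl\{\mathrm{inj}(M),\, \rho_{\mathrm{conv}}(M)\bigr\}
\end{equation*}
and take any non-empty intersection $W := \bigcap_{k=1}^{\ell}\hat{B}(p_{j_k},r)$ with $r \leq \rho$. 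Since a finite intersection of strongly convex sets is strongly convex, $W$ is geodesically convex; picking any $q \in W$, the map $H(x,t) = \gamma_{x,q}(t)$ along the unique minimizing geodesic from $x$ to $q$ provides a continuous deformation retraction of $W$ onto $\{q\}$, so $W$ is contractible. This verifies that $\{\hat{B}(p_k, r)\}_{k=1}^n$ is a good cover.

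Applying the Nerve Lemma yields $\mathcal{U}(\{p_1,\ldots,p_n\}, r) \sim \check{C}(\{p_1,\ldots,p_n\}, r)$, since by construction $\check{C}(\{p_1,\ldots,p_n\}, r)$ is exactly the nerve of this cover. The main technical wrinkle is that the classical nerve theorem is most commonly stated for \emph{open} good covers, whereas we are dealing with closed balls. This is not a serious obstacle: for $r < \rho$ one can slightly enlarge the radii to $r + \varepsilon < \rho$, obtain open balls that still form a good cover by the same convexity argument, and observe that (by the transversality assumption, which is automatic after an arbitrarily small perturbation) the nerve is unchanged and the open union deformation retracts onto the closed one. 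Alternatively, one can directly invoke the version of the Nerve Lemma valid for finite closed good covers of paracompact spaces. Either route delivers the desired homotopy equivalence.
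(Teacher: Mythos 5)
The paper's own ``proof'' of this lemma is simply a citation to \cite[Lemma 6.1]{Antonio}, and your reconstruction is correct and is the standard route: pass to the convexity radius $\rho_{\mathrm{conv}}(M)>0$ of a compact Riemannian manifold so that balls of radius $\leq\rho$ are strongly geodesically convex, note that finite intersections of strongly convex sets are strongly convex and hence contractible, and apply the nerve theorem. The paper's Remark~\ref{remark:smooth} explicitly warns that $r\leq\mathrm{inj}(M)$ alone does \emph{not} give convexity, which is exactly the gap your choice $\rho=\min\{\mathrm{inj}(M),\rho_{\mathrm{conv}}(M)\}$ closes.

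One small caution concerning your first workaround for the open-versus-closed issue: the lemma is asserted for \emph{every} $r\leq\rho$ and every configuration of points, not just nondegenerate ones, so the argument should not invoke a transversality assumption. Fortunately it does not need one. For a fixed configuration and radius $r$, each of the finitely many index sets $J$ with $\bigcap_{j\in J}\hat{B}(p_j,r)=\emptyset$ satisfies $\min_{x\in M}\max_{j\in J}d(x,p_j)>r$ by compactness, and choosing $\varepsilon>0$ below the smallest such gap guarantees that the nerve of the open cover by $B(p_j,r+\varepsilon)$ coincides with the nerve of the closed cover at radius $r$. Your second suggestion --- invoking the nerve theorem directly for finite closed good covers by absolute retracts, which compact geodesically convex subsets of $M$ are --- is cleaner still and sidesteps the perturbation entirely; either way the proof goes through.
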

	\begin{proof}For the proof in this setting, see \cite[Lemma 6.1]{Antonio}.
	\end{proof}
	\begin{definition}[Isotopy classes of connected geometric complexes]\label{def:isotopy}Let $p_1,\ldots,p_n$ and $q_1,\ldots,q_n$ be points in $M$ and let $r_0,r_1\geq 0$ such that 
		\begin{equation*}
		\mathcal{U}_0:=\mathcal{U}(\{p_1,\ldots,p_n\},r_0)\quad\textrm{and}\quad
		\mathcal{U}_1:=\mathcal{U}(\{q_1,\ldots,q_n\},r_1)
		\end{equation*}are nondegenerate $M$--geometric complexes. We say that $\mathcal{U}_0$ and $\mathcal{U}_1$ are \emph{(rigidly) isotopic} and we write $\mathcal{U}_0\cong\mathcal{U}_1$ if there exists an isotopy of diffeomorphisms $\varphi_t:M\rightarrow M$ with $\varphi_0=\id_M$ and a continuous function $r(t)>0$, for $t\in[0,1]$, such that:
		\begin{itemize}
			\item For each $t\in[0,1]$, $\mathcal{U}(\{\varphi_t(p_1),\ldots,\varphi_t(p_n)\},r(t))$ is nondegenerate,
			\item $r(0)=r_0$,
			\item $r(1)=r_1$ and
			\item $\varphi_1(p_1)=q_1,\ldots,\varphi_1(p_n)=q_n$.
		\end{itemize}
	\end{definition}
	\begin{rmk}[Isotopy classes and discriminants]\label{rmkdiscriminant}The definition of two complexes being (rigidly) isotopic is very reminiscent of the notion of rigid isotopy from algebraic geometry, where the ``regular'' deformations are those which do not intersect some discriminant. We can make this analogy more precise. For every $n\in\mathbb{N}$ consider the smooth manifold
	\be\label{eq:H} H_{n}:=\underbrace{M\times \cdots\times M}_{\textrm{$n$ many times}}\times (0, \textrm{inj}(M)),\ee
	together with the \emph{discriminant}
	\be\label{eq:Sigma} \Sigma_{n}:=\{(p_1, \ldots, p_n, r)\,|\,\textrm{ $\mathcal{U}(\{p_1, \ldots, p_n\}, r)$ is degenerate}\}.
	\ee
	The set $\Sigma_{n}$ is closed since its complement $R_{n}$  is defined by the (finitely many) transversality conditions \eqref{eq:transv}. Adopting this point of view, isotopy classes of nondegenerate $M$--geometric complexes built using $n$ many balls are labeled by the connected components of $R_{n}:=H_{n}\backslash \Sigma_{n}$ (the complement of a discriminant).
	In fact, given a nondegenerate complex $\mathcal{U}(\{p_1, \ldots, p_n\}, r)$, then $(p_1, \ldots, p_n, r)\in R_{n}$ (because it is nondegenerate) and viceversa every point in $R_{n}$ correspond to a nondegenerate complex. Moreover, a nondegenerate isotopy of two nondegenerate complexes defines a curve between the corresponding points  of $R_{n}$, and this curve is entirely contained in $R_{n}$; the two corresponding parameters must therefore lie in the same connected component of $R_{n}$; viceversa, because for an open set of a manifold connected and path connected are equivalent, every two points in the same component $R_{n}$ can be joined by an arc all contained in $R_{n}$ and give therefore rise to isotopic complexes.
	\end{rmk}
	\begin{definition}
	        We define the set
	        \begin{equation*}
	            \mathcal{G}(M):=\{\text{connected, nondegenerate $M$--geometric complexes}\}
	        \end{equation*}and use the notation $\mathcal{G}:=\mathcal{G}(M)$ when $M$ is given. We also let
	        \begin{equation*}
		    \left(\mathcal{G}/{\cong}\right):=\{\text{isotopy classes of connected, nondegenerate $M$--geometric complexes}\}.
		\end{equation*}
	\end{definition}
	\begin{rmk}
		Observe that, by definition, each class $[\mathcal{U}]\in\mathcal{G}/{\cong}$ keeps also the information on the way $\mathcal{U}$ is embedded in $M$. In particular, it might be that to two nondegenerate $M$--geometric complexes $\mathcal{U}_1$ and $\mathcal{U}_2$ there correspond isomorphic \v{C}ech complexes $\check{C}(\mathcal{U}_1)\simeq \check{C}(\mathcal{U}_2)$, but at the same time the complexes $\mathcal{U}_1$ and $\mathcal{U}_2$ themselves are not rigidly isotopic (see Figure \ref{fig:complexes}).
	\end{rmk}

	\begin{rmk}For each $M$ of dimension $m$,
		$\mathcal{G}(M)/{\cong}\supset\mathcal{G}(\mathbb{R}^m)/{\cong}$.
	\end{rmk}
	\begin{theorem}\label{teocountable}
		$\mathcal{G}/{\cong}$ is a countable set.
	\end{theorem}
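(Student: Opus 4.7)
The plan is to invoke the discriminant description from Remark \ref{rmkdiscriminant} and exploit the second countability of finite-dimensional manifolds. For each fixed $n\in\mathbb{N}$, let $\mathcal{G}_n/{\cong}$ denote the set of isotopy classes of connected, nondegenerate $M$--geometric complexes built from $n$ balls. By Remark \ref{rmkdiscriminant}, every such isotopy class corresponds to a connected component of the open set
\begin{equation*}
R_n = H_n \setminus \Sigma_n \subset H_n = \underbrace{M\times\cdots\times M}_{n}\times (0,\mathrm{inj}(M)),
\end{equation*}
where $\Sigma_n$ is the closed discriminant locus of degenerate configurations.

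First I would observe that connectedness of the complex $\mathcal{U}(\{p_1,\ldots,p_n\},r)$ is an open condition on $R_n$: at a nondegenerate parameter, the combinatorics of the intersections $\hat{B}(p_i,r)\cap\hat{B}(p_j,r)$ are stable under small perturbations (this follows from the transversality requirement \eqref{eq:transv}), so the \v{C}ech complex is locally constant on $R_n$, and hence so is the number of connected components of $\mathcal{U}$. Therefore the subset $R_n^{\mathrm{conn}}\subset R_n$ parametrizing connected complexes is open. Consequently, a surjection
\begin{equation*}
\pi_0(R_n^{\mathrm{conn}}) \twoheadrightarrow \mathcal{G}_n/{\cong}
\end{equation*}
is obtained by sending a connected component of $R_n^{\mathrm{conn}}$ to the common isotopy class of the complexes parametrized by its points (any two such points can be joined by a path in $R_n^{\mathrm{conn}}$, which provides the required nondegenerate isotopy).

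Second, since $H_n$ is a finite-dimensional smooth manifold, it is second countable, and hence so is its open subset $R_n^{\mathrm{conn}}$. A locally path-connected, second-countable space has at most countably many connected components, so $\pi_0(R_n^{\mathrm{conn}})$ is countable, and therefore $\mathcal{G}_n/{\cong}$ is countable. Finally,
\begin{equation*}
\mathcal{G}/{\cong} = \bigsqcup_{n\in\mathbb{N}} \mathcal{G}_n/{\cong}
\end{equation*}
is a countable union of countable sets, hence countable.

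The only delicate point — and the one that would require the most care — is verifying that connectedness of $\mathcal{U}$ is constant on connected components of $R_n$; this reduces to checking that the combinatorial type of the \v{C}ech cover is locally constant on $R_n$, which in turn follows from the openness of the transversality condition \eqref{eq:transv} together with a standard argument that transverse intersections of smooth submanifolds persist (and retain their combinatorial pattern of intersections) under $C^1$--small perturbations of the centers and of the radius.
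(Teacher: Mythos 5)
Your proof is correct and follows essentially the same route as the paper: decompose $\mathcal{G}/{\cong}$ by the number $n$ of balls, use the discriminant picture of Remark~\ref{rmkdiscriminant} to pass to connected components of the open set $R_n\subset H_n$, and conclude by the countability of $\pi_0$ for an open subset of a finite-dimensional manifold. The paper phrases the last two steps slightly differently (it bounds $\#(\mathcal{G}_n/{\cong})$ by $\#\pi_0(R_n)$ directly, noting that the number of connected components of $\mathcal{U}$ is an isotopy invariant and hence constant on each component of $R_n$, and it proves countability of $\pi_0(R_n)$ by an explicit chart cover plus the rational-point trick rather than by invoking second countability), but these are cosmetic variations on the same argument.
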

	\begin{proof}
		We first partition $\mathcal{G}/{\cong}$ into countably many sets. For every $n\in \mathbb{N}$ we consider the set:
		\be
		\left(\mathcal{G}_{n}/{\cong}\right):=\{\textrm{classes of complexes in $\mathcal{G}/{\cong}$ which are built using $n$ many balls}\},\ee
		and we need to prove that this set is countable.
		We have already seen (Remark \ref{rmkdiscriminant}) that isotopy classes of nondegenerate complexes which are built using $n$ many balls are in one-to-one correspondence with the connected components of $R_{n}=H_{n}\backslash \Sigma_{n}$ (these sets are defined by \eqref{eq:H} and \eqref{eq:Sigma}). The function ``number of connected components of a nondegenerate complex'' is constant on each component of $R_{n}$ and consequently the number of isotopy classes of \emph{connected} and nondegenerate complexes (i.e. the cardinality of $\mathcal{G}_{n}/{\cong}$) is smaller than the number of components of $R_{n}$:
		\be \#\bigl(\mathcal{G}_{n}/{\cong}\bigr)\leq \#\bigl(\{\textrm{connected components of $R_{n}$}\}\bigr).\ee
		
		We are therefore reduced to prove that $R_{n}$ has at most countably many components. To this end we write $R_{n}$ as the disjoint union of its components (each of which is an open set in $H_{n}$):
		\be \label{eq:Ca}R_{n}=\bigsqcup_{\alpha \in A}C_{\alpha}.\ee
		We cover now the manifold $H_{n}$ with countably many manifold charts $\{(V_j, \varphi_j)\}_{j\in \mathbb{N}}$ with $\varphi_j:V_j\stackrel{\sim}{\longrightarrow} \R^m$. For every $j\in \mathbb{N}$ we consider also the decomposition of the open set $R_{n}\cap V_j$ into its connected components:
		\be R_{n}\cap V_j=\bigsqcup_{\beta \in B_j}C_{\beta, j}.\ee
		Since each $C_\alpha$ from \eqref{eq:Ca} is the union of elements of the form 
		\be C_\alpha=\bigcup_{j\in N_\alpha,\, \beta \in B_j}C_{\beta, j}\ee
		with the index set $j$ running over the countable set $N_\alpha\subset \mathbb{N}$, it is therefore enough to prove that for every $j\in \mathbb{N}$ the index set $B_{j}$ is countable, i.e. that the number of connected components of $R_{n}\cap V_j$ is countable.
		Observe now that, since $\varphi_j$ is a diffeomorphism between $V_j$ and $\R^m$, then the number of connected components of $R_{n}\cap V_j$ is the same of the number of connected components of $\varphi_j(R_{n}\cap V_j),$ which is an open subset of $\R^m$. Since in each component of $\varphi_j(R_{n}\cap V_j)$ we can pick a point with rational coordinates, it follows that the number of such components is countable, and this concludes the proof.
		
		\end{proof}
		\begin{rmk}Observe that the key point of the proof of Theorem \ref{teocountable} is showing that the number of connected components of an open set in a differentiable manifold is countable.
		\end{rmk}
	\begin{definition}
		We let 
		\begin{equation*}
		    \left(\mathcal{G}/{\sim}\right):=\{\text{homotopy classes of connected, nondegenerate $M$--geometric complexes}\}
		\end{equation*}
		and we let
		\begin{equation*}
		\left(\mathcal{G}^{(k)}/{\simeq}\right):=\{\text{isomorphism classes of $k$--skeleta of connected $M$--geometric complexes}\}.
		\end{equation*}In particular,
		\begin{equation*}
		\mathcal{G}^{(1)}/{\simeq}=\{\text{isomorphism classes of connected $M$--geometric graphs}\}.
		\end{equation*}
	\end{definition}
	\begin{rmk}\label{rmkpallettine}In order to appreciate the difference between the classes in $\left(\mathcal{G}/{\cong}\right)$, $\left(\mathcal{G}/{\sim}\right)$ and $\left(\mathcal{G}^{(k)}/{\simeq}\right)$, we look at Figure \ref{fig:complexes}. Here, we have three $\R^3$--geometric complexes, given by the union of balls in $\R^3$, that have three different shapes. All three complexes are homotopy equivalent to each other and they are all pairwise not isotopic. Moreover, while the first one and the second one have isomorphic $1$--skeleta, the $1$--skeleton of the third complex is not isomorphic to the other ones.
	\end{rmk}
	\begin{figure}
		\begin{center}
				\includegraphics[width=9cm]{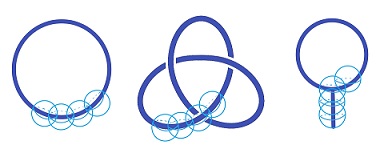}

	\caption{The three $\R^3$--geometric complexes described in Remark \ref{rmkpallettine}. From left to right, we call them $\mathcal{U}_1, \mathcal{U}_2$ and $ \mathcal{U}_3$ and we assume the number of balls and their combinatorics needed to to describe the first two are the same. Then $[\mathcal{U}_1]=[\mathcal{U}_2]=[\mathcal{U}_3]$ in $\mathcal{G}/{\sim}$, because they are all homotopy equivalent, $[\mathcal{U}_1]=[\mathcal{U}_2]\neq[\mathcal{U}_3]$ in $\mathcal{G}/{\simeq}$ (the first two give rise to same \v{C}ech complexes, which is however different from the last one) and $[\mathcal{U}_1]\neq[\mathcal{U}_2]\neq[\mathcal{U}_3]\neq [\mathcal{U}_1]$ in $\mathcal{G}/{\cong}$ (they are all pairwise non-isotopic).}\label{fig:complexes}
	\end{center}
	\end{figure}
	\begin{rmk}\label{rmk:forgetful}
		There are natural forgetful maps
		\begin{align*}
		\phi:\mathcal{G}/{\cong}\longrightarrow&\mathcal{G}/{\sim}\\
		[\mathcal{U}]\longmapsto&[\mathcal{U}]
		\end{align*}and
		\begin{align*}
		\varphi^{(k)}:\mathcal{G}/{\cong}\longrightarrow&\mathcal{G}^{(k)}/{\simeq}\\
		[\mathcal{U}]\longmapsto&[\check{C}^{(k)}(\mathcal{U})].
		\end{align*}
	\end{rmk}
	\begin{definition}[Component counting function]
		Given a nondegenerate geometric complex $\mathcal{U}\subset M$, a topological subspace $Y\subset M$ and a class $w\in\mathcal{G}/{\cong}$, we define
		\begin{equation*}
		\mathcal{N}(\mathcal{U};w):=\#\bigl(\text{components of $\mathcal{U}$ of type $w$}\bigr);
		\end{equation*}
		\begin{equation*}
		\mathcal{N}(\mathcal{U},Y;w):=\#\bigl(\text{components of $\mathcal{U}$ of type $w$ entirely contained in the interior of $Y$}\bigr);
		\end{equation*}
		\begin{equation*}
		\mathcal{N}^*(\mathcal{U},Y;w):=\#\bigl(\text{components of $\mathcal{U}$ of type $w$ intersecting $Y$}\bigr).
		\end{equation*}
	\end{definition}
	In the paper \cite{NazarovSodin} Nazarov and Sodin have introduced a powerful tool (the ``integral geometry sandwitch'') for localizing the count of the number of components of the zero set of random waves in a Riemannian manifold. This tool has been used by Sarnak and Wigman \cite{SarnakWigman} for the study of distribution of components type of the zero set of random waves on a Riemannian manifold, and it has been adapted to geometric complexes in \cite{Antonio}. We recall here this tool, stated in the language of this paper. 
	\begin{theorem}[Analogue of the Integral Geometry Sandwich]\label{teosandwich}The following two estimates are true:
	\begin{enumerate}
	\item \emph{(The local case)} Let $\mathcal{U}$ be a generic geometric complex in $\mathbb{R}^m$ and fix $w\in \mathcal{G}(\R^m)/{\cong}$. Then for $0<r<R$
	\begin{equation*}
	\int_{B_{R-r}}\frac{\mathcal{N}(\mathcal{U},B(x,r);w)}{\vol(B_r)}\diff x\leq \mathcal{N}(\mathcal{U},B_R;w)\leq\int_{B_{R+r}}\frac{\mathcal{N}^*(\mathcal{U},B(x,r);w)}{\vol (B_r)}\diff x.
	\end{equation*}
	\item \emph{(The global case)} Let $\mathcal{U}$ be a generic geometric complex in a compact Riemannian manifold $M$ and fix $w\in \mathcal{G}(M)/{\cong}$. Then for every $\varepsilon>0$ there exists $\eta>0$ such that for every $r<\eta$:
	\be (1-\varepsilon)\int_{M}\frac{\mathcal{N}(\mathcal{U},B(x,r);w)}{\vol(B_r)}\diff x\leq \mathcal{N}(\mathcal{U},M;w)\leq(1+\varepsilon)\int_{M}\frac{\mathcal{N}^*(\mathcal{U},B(x,r);w)}{\vol (B_r)}\diff x.
	\ee
	\end{enumerate}
\end{theorem}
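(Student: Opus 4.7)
The approach for both parts is the Nazarov--Sodin double-counting trick (compare \cite{NazarovSodin, SarnakWigman}): rewrite the integral over centers $x$ as a sum over components $C$ of $\mathcal{U}$ of type $w$, weighted by the measure of the ``set of centers'' for which $C$ contributes to the integrand, and then bound this weight in terms of $\vol(B_r)$.

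For the local case, I would handle the lower bound by Fubini:
\begin{equation*}
\int_{B_{R-r}} \mathcal{N}(\mathcal{U},B(x,r);w)\,\diff x = \sum_{C} \vol\bigl(\{x\in B_{R-r}: C\subset B(x,r)\}\bigr),
\end{equation*}
where $C$ ranges over components of type $w$. A summand is nonzero only if $C\subset B_R$ (because $B(x,r)\subset B_R$ whenever $x\in B_{R-r}$), and for such $C$ the set $\{x: C\subset B(x,r)\}=\bigcap_{p\in C}B(p,r)$ is contained in $B(p,r)$ for any single $p\in C$, hence has volume at most $\vol(B_r)$. Dividing by $\vol(B_r)$ yields the left inequality. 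For the upper bound, to each component $C\subset B_R$ of type $w$ I would attach an arbitrary point $p_C\in C$; then $B(p_C,r)\subset B_{R+r}$, and for every $x\in B(p_C,r)$ the point $p_C$ witnesses $C\cap B(x,r)\neq\emptyset$, so $C$ contributes to $\mathcal{N}^*(\mathcal{U},B(x,r);w)$. Summing over such $C$ gives
\begin{equation*}
\int_{B_{R+r}}\mathcal{N}^*(\mathcal{U},B(x,r);w)\,\diff x \geq \sum_{C} \vol(B(p_C,r)) = \vol(B_r)\cdot\mathcal{N}(\mathcal{U},B_R;w).
\end{equation*}

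For the global case the same Fubini computation transcribes verbatim with the Riemannian ball $\hat B(x,r)$ in place of $B(x,r)$. The only new feature is that $\vol(\hat B(x,r))$ is no longer exactly $\vol(B_r)$, and similarly for the volumes of intersections $\bigcap_{p\in C}\hat B(p,r)$. Working below the injectivity radius (cf. Remark \ref{remark:smooth}), compactness of $M$ together with smoothness of the exponential map gives, for every $\varepsilon>0$, some $\eta>0$ such that uniformly in $x\in M$ and $r\leq\eta$,
\begin{equation*}
(1-\varepsilon)\vol(B_r)\leq \vol(\hat B(x,r))\leq (1+\varepsilon)\vol(B_r),
\end{equation*}
and likewise for the relevant intersections. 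Since we integrate over the whole of $M$, there are no boundary effects analogous to the passage $B_R\leadsto B_{R\pm r}$, and the only loss is the multiplicative factor $(1\pm\varepsilon)$. The main technical point is precisely making this volume comparison uniform in $x$ (and across all relevant intersections of balls centered at points of a single component $C$), which is a consequence of compactness of $M$ and smoothness of $g$.
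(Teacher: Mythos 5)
Your proposal is correct and unwinds exactly the Nazarov--Sodin double-counting argument (Fubini over centers $x$, then bounding each component's ``set of centers'' by the volume of a single ball) that the paper simply cites from \cite{SarnakWigman} with the remark that it applies verbatim to any partition of components into types. One minor economy: in the global case you never actually need the uniform volume comparison for the intersections $\bigcap_{p\in C}\hat B(p,r)$, since both bounds rest only on the single-ball containments $\{x: C\subset\hat B(x,r)\}\subset\hat B(p,r)$ and $\hat B(p_C,r)\subset\{x: C\cap\hat B(x,r)\neq\emptyset\}$, so the uniform estimate $(1-\varepsilon)\vol(B_r)\leq\vol(\hat B(p,r))\leq(1+\varepsilon)\vol(B_r)$ for $r<\eta$ already suffices.
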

\begin{proof}The proof of both statements is exactly the same as in \cite{SarnakWigman}, after noticing that the only property needed on the counting functions is that the considered topological spaces only have finitely many components, and these components are counted according to a specific type (here are selected according to isotopy type, but we could consider instead any function that partitions the set of components and count only the components belonging to a given class).
\end{proof}
	
	\section{Random geometric complexes (thermodynamic regime)}\label{section: Random geometric complexes}
	\begin{definition}[Riemannian case]Let $M$ be a compact Riemannian manifold and consider a set of points $\{p_1,\ldots,p_n\}$ independently sampled from the uniform distribution on $M$. Fix a positive number $\alpha>0$, let $r:=\alpha n^{-1/m}$ and let
		\begin{equation*}
		\mathcal{U}_n:=\mathcal{U}(\{p_1,\ldots,p_n\},r).
		\end{equation*}We say that $\mathcal{U}_n$ is a \emph{random $M$--geometric complex}. The choice of such $r$ is what defines the so-called \emph{critical} or \emph{thermodynamic regime}.
	\end{definition}
	\begin{definition}[Euclidean Poisson case]
		Let $P:=\{p_1,p_2,\ldots\}$ be a set of points sampled from the standard spatial Poisson distribution in $\mathbb{R}^m$. For $\alpha>0$, let
		\begin{equation*}
		\mathcal{P}:=\bigcup_{p\in P}B(p,\alpha)
		\end{equation*}and, for $R>0$, let
		\begin{equation*}
		\mathcal{P}_R:=\{\text{connected components of $\mathcal{P}$ entirely contained in the interior of }B(0,R)\}.
		\end{equation*}
	\end{definition}
	\begin{rmk}
		With probability $1$, we have that $\#\bigl(P\cap B(0,R)\bigr)$ is finite. To see this, observe that
		\begin{equation*}
		\mathbb{P}\{\#\bigl(P\cap B(0,R)\bigr)\geq l\}=\sum_{k\geq l}\frac{\vol(B(0,R))}{k!}e^{-\vol(B(0,R))}
		\end{equation*}and
		\begin{equation*}
		\mathbb{P}\{\#\bigl(P\cap B(0,R)\bigr)=\infty\}\leq\mathbb{P}\{\#\bigl(P\cap B(0,R)\bigr)\geq l\}\xrightarrow{l\rightarrow\infty}0.
		\end{equation*}
	\end{rmk}
	
From now on, we will only consider nondegenerate complexes, without further mentioning this assumption. This is not reductive, since our random complexes are nondegenerate with probability one.	
	\section{Random measures}\label{section:Random measures}
	We fix the following notation. Given a set $A$ with a fixed sigma algebra (omitted from the notation), we denote by:
			\begin{equation*}
			\mathcal{M}(A):=\{\text{measures on }A\}
\qquad\text{and}\qquad
		\mathcal{M}^1(A):=\{\text{probability measures on }A\}.
		\end{equation*}

	\begin{definition}
			Let $\mathcal{U}\subset M$ be a finite geometric complex and let
		\begin{equation*}
		\mathcal{U}=\mathcal{U}^1\sqcup\ldots\sqcup\mathcal{U}^{b_0(\mathcal{U})}
		\end{equation*}be its decomposition into connected components.
		We define $\Theta_\mathcal{U}\in\mathcal{M}^1(\mathcal{G}/{\cong})$ as
		\begin{equation*}
		\Theta_\mathcal{U}:=\frac{1}{b_0(\mathcal{U})}\sum_{j=1}^{b_0(\mathcal{U})}\delta_{[\mathcal{U}^j]}.
		\end{equation*}
		Observe that the measure $\Theta_{\mathcal{U}}$ just defined is a probability measure.
		We also endow $\mathcal{M}^1(\mathcal{G}/{\cong})$ with the \emph{total variation distance}:
		\begin{equation*}
		d_{\tv}(\Theta_1,\Theta_2):=\sup_{A\subset\mathcal{G}/{\cong}}|\Theta_1(A)-\Theta_2(A)|.
		\end{equation*}When $\mathcal{U}$ is a random geometric complex, $\Theta_{\mathcal{U}}$ is a random variable with values in the metric space $(\mathcal{M}^1(\mathcal{G}/{\cong}),d_{\tv})$. In this context, recall the notion of \emph{convergence in probability}:
		\begin{equation*}
		\Theta_n\xrightarrow{\mathbb{P}}\Theta \iff \forall\varepsilon, \lim_{n\rightarrow \infty}\mathbb{P}(d_{\tv}(\Theta_n,\Theta)\geq\varepsilon)=0.
		\end{equation*}Using the previous notation, we set
		\begin{equation*}
		\Theta_n:=\Theta_{\mathcal{U}_n} \qquad\text{and}\qquad\Theta_R:=\Theta_{\mathcal{P}_R}.
		\end{equation*}
	\end{definition}
	\begin{theorem}\label{mainthm}
		There exists a probability measure $\Theta\in\mathcal{M}^1(\mathcal{G}/{\cong})$ such that
		\begin{enumerate}
			\item $\Theta_n\xrightarrow[n\rightarrow \infty]{\mathbb{P}}\Theta\qquad$ and $\qquad\Theta_R\xrightarrow[R\rightarrow \infty]{\mathbb{P}}\Theta$
			\item $\supp(\Theta)=\mathcal{G}(\R^m)/{\cong}$.
		\end{enumerate}
	\end{theorem}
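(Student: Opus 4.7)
The plan is to transfer the problem to the Euclidean Poisson setting via rescaling, establish the limit there using ergodicity, then pass back to the Riemannian setting using the Integral Geometry Sandwich (Theorem \ref{teosandwich}). Since $\mathcal{G}/{\cong}$ is countable (Theorem \ref{teocountable}), the total variation distance admits the explicit representation
\begin{equation*}
d_{\tv}(\Theta_n,\Theta) \;=\; \tfrac{1}{2}\sum_{w\in\mathcal{G}/{\cong}}\bigl|\Theta_n(\{w\})-\Theta(\{w\})\bigr|,
\end{equation*}
so the proof splits into (a) convergence of each atom $\Theta_n(\{w\})$, and (b) a tightness statement allowing us to sum up.

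I would first treat the Poisson model $\mathcal{P}_R$. The standard spatial Poisson process is ergodic (indeed mixing) with respect to $\R^m$-translations, so Wiener's multi-parameter ergodic theorem applied to the stationary random field $x \mapsto \mathcal{N}^*(\mathcal{P},B(x,r);w)$ gives, for each fixed $w \in \mathcal{G}/{\cong}$ and each $r>0$, the almost sure convergence
\begin{equation*}
\frac{1}{\vol(B_R)}\int_{B_R}\mathcal{N}(\mathcal{P},B(x,r);w)\,dx \;\longrightarrow\; \mathbb{E}\,\mathcal{N}(\mathcal{P},B(0,r);w)\quad\text{as } R\to\infty,
\end{equation*}
and similarly with $\mathcal{N}^*$. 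Sandwiching $\mathcal{N}(\mathcal{P},B_R;w)$ via the local Integral Geometry Sandwich (Theorem \ref{teosandwich} (1)) and letting $r\to\infty$ after $R\to\infty$ yields a constant $c(w)\in[0,\infty)$ with $\mathcal{N}(\mathcal{P},B_R;w)/\vol(B_R)\to c(w)$ a.s. The same argument with the "all components'' counting function produces $b_0(\mathcal{P}_R)/\vol(B_R)\to \beta$ a.s., and one defines $\Theta(\{w\}):=c(w)/\beta$. Summing over $w$ gives a probability measure (up to checking normalisation, which follows from monotone convergence and the bound $\sum_w c(w)=\beta$, obtained by partitioning all components by isotopy type).

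For the Riemannian case, fix $w\in\mathcal{G}(\R^m)/{\cong}$ and apply the global Integral Geometry Sandwich (Theorem \ref{teosandwich} (2)) with the localising radius $r_n:=Rn^{-1/m}$ for $R$ large but fixed. Under the rescaling $y\mapsto n^{1/m}\mathrm{exp}_x^{-1}(y)$, the restriction of $\mathcal{U}_n$ to $\hat B(x,r_n)$ converges in distribution (uniformly in $x\in M$, using $\vol(M)=1$ and control on the metric at scale $n^{-1/m}$) to the Poisson complex $\mathcal{P}$ restricted to $B(0,R)$: the conditional distribution of the points in a ball of Riemannian volume $\vol(B_R)/n$ is Poisson with intensity tending to Lebesgue as $n\to\infty$. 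Taking expectations, dominating by the total number of components, and then letting $n\to\infty$ followed by $R\to\infty$, one obtains $\mathbb{E}\,\mathcal{N}(\mathcal{U}_n,M;w)/n\to c(w)\alpha^m$; a second-moment/concentration argument (identical to the one used in \cite{Antonio} for homotopy types and relying only on the fact that components in disjoint far-apart balls are essentially independent) upgrades this to $L^1$-convergence, and combined with $b_0(\mathcal{U}_n)/n\xrightarrow{L^1}\beta$ yields $\Theta_n(\{w\})\xrightarrow{\mathbb{P}}\Theta(\{w\})$ for each $w$.

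The main obstacle is the passage from pointwise (atom-by-atom) convergence to convergence in total variation, which requires a tightness estimate: for every $\varepsilon>0$ one must exhibit a finite set $F\subset\mathcal{G}/{\cong}$ such that $\Theta_n(\mathcal{G}/{\cong}\setminus F)<\varepsilon$ with probability tending to one. The natural exhaustion is by the number $N(w)$ of balls in a representative, so I would set $F_K:=\{w:N(w)\leq K\}$ and bound $\sum_{w\notin F_K}\Theta_n(\{w\})$ by $\bigl(1/b_0(\mathcal{U}_n)\bigr)\cdot\#\{\text{components with }{\geq}K\text{ balls}\}$, which is controlled by $n/(Kb_0(\mathcal{U}_n))\to 1/(K\beta)$; choosing $K>1/(\beta\varepsilon)$ gives tightness. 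Combining tightness with the atom-wise convergence established above closes the argument for both the Riemannian and Poisson models. Finally, the support identity $\supp(\Theta)=\mathcal{G}(\R^m)/{\cong}$ follows from $\Theta(\{w\})=c(w)/\beta$: for $w\in\mathcal{G}(\R^m)/{\cong}$, positivity of $c(w)$ is obtained by exhibiting a compactly supported configuration of Poisson points realising $w$ with positive probability (using Proposition \ref{propexistence}), while any $w$ not isotopic to a component of an $\R^m$-geometric complex cannot appear in the Poisson rescaling limit, forcing $c(w)=0$.
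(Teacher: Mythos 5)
Your proposal follows the same architecture as the paper's proof: transfer to the Euclidean Poisson model via rescaling, establish atom-wise $L^1$/a.s.\ convergence of the normalized counting functions $\mathcal{N}(\cdot;w)/\vol$ using the Integral Geometry Sandwich (Theorem \ref{teosandwich}), promote atom-wise convergence to total-variation convergence via a tightness estimate (bounding the mass of components built from $\geq K$ balls by $n/(Kb_0)$), and get positivity of the limit atoms on $\mathcal{G}(\R^m)/{\cong}$ from a local configuration argument. This is exactly the strategy the paper executes in Proposition \ref{prop2.1}, Theorems \ref{theo:main1}, \ref{teo3.1}, \ref{teo4.1}, and \ref{theo:main2}, which themselves are adaptations of \cite{Antonio} with isotopy types in place of homotopy types.

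One thing you pass over quickly but which the paper treats with some care: the bridge between the Riemannian and Poisson counts (your ``converges in distribution to the Poisson complex'' step, corresponding to Theorem \ref{teo3.1}) must be upgraded from a homotopy-preserving coupling (as in \cite{Antonio}) to an isotopy-preserving one. The paper handles this by noting that the construction in \cite{Antonio} actually matches the \emph{combinatorics of the covers}, which together with nondegeneracy pins down the rigid isotopy class, not just the homotopy type. You should make this explicit, since without it the whole refinement from $\mathcal{G}/{\sim}$ to $\mathcal{G}/{\cong}$ is not justified at the crucial transfer step. Also, a small inconsistency: you invoke Proposition \ref{propexistence} (stated for the Riemannian model $\mathcal{U}_n$) to show positivity of $c(w)$ for the \emph{Poisson} model; the paper instead argues directly in the Poisson model inside Proposition \ref{prop2.1} by planting a configuration of Poisson points near a rescaled representative of $w$ and using nondegeneracy to get a whole open set of realizing configurations. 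The two arguments are morally the same, but for the Poisson support claim you should use the Poisson-side argument.

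Minor normalization note: you write the Riemannian limit as $c(w)\alpha^m$, whereas the paper's Theorem \ref{teo4.1} gives $c_w\cdot\vol(M)=c_w$ (since $\vol(M)=1$); the constant $\alpha$ is absorbed into the definition of the Poisson complex $\mathcal{P}$ (balls of radius $\alpha$), so no extra $\alpha^m$ factor should appear. This does not affect the structure of the argument.
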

	We shall see the proof of Theorem \ref{mainthm} in Section \ref{sectionmainproof}. As a first corollary we recover the results from \cite{Antonio}.
	\begin{cor}[Theorem 1.1 and Theorem 1.3 from \cite{Antonio}]
		Consider the forgetful map \begin{align*}
		\phi:\mathcal{G}/{\cong}\longrightarrow&\,\mathcal{G}/{\sim}\\
		[\mathcal{U}]\longmapsto&[\mathcal{U}].
		\end{align*}We have that
		\begin{equation*}
		\phi_*\Theta_n\xrightarrow{\mathbb{P}}\phi_*\Theta\qquad\text{and}\qquad\phi_*\Theta_R\xrightarrow{\mathbb{P}}\phi_*\Theta.
		\end{equation*}Also, $\supp\phi_*\Theta=\mathcal{G}(\R^m)/{\sim}$.
	\end{cor}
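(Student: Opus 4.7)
The plan is to reduce the corollary entirely to Theorem \ref{mainthm} by exploiting the fact that pushforward by a measurable map is a $1$-Lipschitz contraction for the total variation distance. Concretely, for any two probability measures $\mu_1,\mu_2\in\mathcal{M}^1(\mathcal{G}/{\cong})$ and any $B\subset\mathcal{G}/{\sim}$, one has
$$|\phi_*\mu_1(B)-\phi_*\mu_2(B)|=|\mu_1(\phi^{-1}(B))-\mu_2(\phi^{-1}(B))|\leq d_{\tv}(\mu_1,\mu_2),$$
so taking the supremum over $B$ yields $d_{\tv}(\phi_*\mu_1,\phi_*\mu_2)\leq d_{\tv}(\mu_1,\mu_2)$. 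I would isolate this as a small preliminary lemma (also to reuse it for $\varphi_*$ and $\varphi^{(k)}_*$ elsewhere in the paper), noting that $\mathcal{G}/{\cong}$ and $\mathcal{G}/{\sim}$ are countable (Theorem \ref{teocountable}), so we may equip them with the discrete $\sigma$-algebra and every map between them is automatically measurable.

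Next I would transfer the convergence in probability. For every $\varepsilon>0$, the contraction property gives the inclusion of events
$$\bigl\{d_{\tv}(\phi_*\Theta_n,\phi_*\Theta)\geq\varepsilon\bigr\}\subseteq\bigl\{d_{\tv}(\Theta_n,\Theta)\geq\varepsilon\bigr\},$$
and the right-hand side has probability tending to zero by Theorem \ref{mainthm}(1), proving $\phi_*\Theta_n\xrightarrow{\mathbb{P}}\phi_*\Theta$. The same argument applied to the Euclidean Poisson model gives $\phi_*\Theta_R\xrightarrow{\mathbb{P}}\phi_*\Theta$ as $R\to\infty$.

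For the support statement, since $\mathcal{G}/{\sim}$ is a countable discrete space, the support of any probability measure on it coincides with its set of atoms. For an atom $[u]\in\mathcal{G}/{\sim}$ we have $\phi_*\Theta(\{[u]\})=\Theta(\phi^{-1}([u]))$, and this is positive precisely when $\phi^{-1}([u])$ contains an atom of $\Theta$; this gives the general identity $\supp(\phi_*\Theta)=\phi(\supp(\Theta))$. Combining with Theorem \ref{mainthm}(2) yields
$$\supp(\phi_*\Theta)=\phi\bigl(\mathcal{G}(\R^m)/{\cong}\bigr)=\mathcal{G}(\R^m)/{\sim},$$
where the last equality holds because $\phi$ sends the isotopy class of an $\R^m$-geometric complex to its homotopy class, and by definition every element of $\mathcal{G}(\R^m)/{\sim}$ arises this way.

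I do not anticipate a real obstacle: this is a purely formal consequence of Theorem \ref{mainthm}, with the functoriality of the pushforward doing all the work. The only point worth checking carefully is that the natural discrete measurable structure on the countable spaces $\mathcal{G}/{\cong}$ and $\mathcal{G}/{\sim}$ makes $\phi$ measurable and the total variation distance take the familiar form used above; this is where Theorem \ref{teocountable} is invoked.
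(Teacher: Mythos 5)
Your argument is correct and is exactly the mechanism the paper has in mind: the corollary is stated without a separate proof precisely because it is the pushforward principle sketched in the introduction (that any ``forgetful'' map $\psi$ on $\mathcal{G}/{\cong}$ induces $\psi_*\Theta_n\to\psi_*\Theta$). The two ingredients you isolate — that pushforward is a contraction for $d_{\tv}$, hence transfers convergence in probability, and that for countable discrete spaces $\supp(\phi_*\Theta)=\phi(\supp\Theta)$ — are the right ones, and your invocation of Theorem \ref{teocountable} to justify the discrete $\sigma$-algebra and measurability of $\phi$ is the right place to point.
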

	As a second corollary we see that, because Theorem \ref{mainthm} keeps track of fine properties of the geometric complex, we can use other forgetful maps and obtain information on the limit distribution of the components type of each $k$--skeleton.
	\begin{cor}\label{corsupppihik}
		For each $k\in\mathbb{N}$, consider the forgetful map from Remark \ref{rmk:forgetful}:
		\begin{align*}
		\varphi^{(k)}:\mathcal{G}/{\cong}\longrightarrow&\,\mathcal{G}^{(k)}/{\simeq}\\
		[\mathcal{U}]\longmapsto&[\check{C}^{(k)}(\mathcal{U})].
		\end{align*}We have that
		\begin{equation*}
		\varphi^{(k)}_*\Theta_n\xrightarrow{\mathbb{P}}\varphi^{(k)}_*\Theta\qquad\text{and}\qquad\varphi^{(k)}_*\Theta_R\xrightarrow{\mathbb{P}}\varphi^{(k)}_*\Theta.
		\end{equation*}Also, $\supp\varphi^{(k)}_*\Theta=\mathcal{G}^{(k)}(\mathbb{R}^m)/{\simeq}$.
	\end{cor}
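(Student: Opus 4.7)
The corollary is a formal consequence of Theorem \ref{mainthm}, so the plan is to push the convergence through $\varphi^{(k)}$ using the fact that pushforward is $1$-Lipschitz for total variation. For the support statement, the plan is to use that $\varphi^{(k)}$ has countable discrete source and target, so the support of the pushforward is just the image of the support.

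\textbf{Step 1: Contractivity of pushforward in TV.} First I would observe that for any map $\psi:X\to Y$ between countable sets (equipped with the power-set sigma-algebras) and any two probability measures $\mu_1,\mu_2$ on $X$,
\begin{equation*}
d_{\tv}(\psi_*\mu_1,\psi_*\mu_2)=\sup_{B\subset Y}\bigl|\mu_1(\psi^{-1}(B))-\mu_2(\psi^{-1}(B))\bigr|\leq \sup_{A\subset X}|\mu_1(A)-\mu_2(A)|=d_{\tv}(\mu_1,\mu_2).
\end{equation*}
Applying this pointwise (in $\omega$) to $\mu_1=\Theta_n(\omega)$ and $\mu_2=\Theta$, I get the event inclusion
\begin{equation*}
\bigl\{d_{\tv}(\varphi^{(k)}_*\Theta_n,\varphi^{(k)}_*\Theta)\geq \varepsilon\bigr\}\subset \bigl\{d_{\tv}(\Theta_n,\Theta)\geq \varepsilon\bigr\}
\end{equation*}
for every $\varepsilon>0$. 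Taking probabilities and using Theorem \ref{mainthm}(1), the right-hand side tends to $0$, so $\varphi^{(k)}_*\Theta_n\xrightarrow{\mathbb{P}}\varphi^{(k)}_*\Theta$. The same verbatim argument with $\Theta_R$ in place of $\Theta_n$ gives $\varphi^{(k)}_*\Theta_R\xrightarrow{\mathbb{P}}\varphi^{(k)}_*\Theta$.

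\textbf{Step 2: The support.} Since the underlying spaces $\mathcal{G}/{\cong}$ and $\mathcal{G}^{(k)}/{\simeq}$ are countable and discrete, $\supp(\varphi^{(k)}_*\Theta)=\{y:\varphi^{(k)}_*\Theta(\{y\})>0\}=\{y:\Theta((\varphi^{(k)})^{-1}(y))>0\}$. A singleton $y\in \mathcal{G}^{(k)}/{\simeq}$ has positive $\Theta$-preimage mass if and only if there exists $[\mathcal{U}]\in\supp(\Theta)$ with $\varphi^{(k)}([\mathcal{U}])=y$; equivalently, $\supp(\varphi^{(k)}_*\Theta)=\varphi^{(k)}(\supp(\Theta))$. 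By Theorem \ref{mainthm}(2), $\supp(\Theta)=\mathcal{G}(\R^m)/{\cong}$, so it remains to show
\begin{equation*}
\varphi^{(k)}\bigl(\mathcal{G}(\R^m)/{\cong}\bigr)=\mathcal{G}^{(k)}(\R^m)/{\simeq}.
\end{equation*}
The inclusion $\subset$ is immediate from the definition of $\varphi^{(k)}$. For $\supset$, any class in $\mathcal{G}^{(k)}(\R^m)/{\simeq}$ is by definition represented by the $k$-skeleton of the \v{C}ech complex of some connected nondegenerate $\R^m$-geometric complex $\mathcal{U}$; the isotopy class $[\mathcal{U}]\in \mathcal{G}(\R^m)/{\cong}$ then maps to it under $\varphi^{(k)}$.

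\textbf{Main obstacle.} There is essentially no technical obstacle here: both parts of the corollary are formal consequences of Theorem \ref{mainthm} together with the categorical facts that pushforward is $1$-Lipschitz in TV and that on countable discrete spaces the support of a pushforward equals the pushforward of the support. The only small care needed is to note that $\varphi^{(k)}$ is well defined on isotopy classes (which was already remarked in the introduction, since isotopic nondegenerate complexes have isomorphic \v{C}ech complexes, hence isomorphic $k$-skeleta), so that the event inclusion in Step 1 makes sense.
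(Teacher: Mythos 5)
Your proof is correct and is exactly the argument the paper intends: the paper states this corollary without proof, treating it as an immediate formal consequence of Theorem~\ref{mainthm} via the forgetful map, which is precisely the $1$-Lipschitz-pushforward-in-TV argument and the discrete-space support identity that you spell out.
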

\begin{prop}[Existence of all isotopy types]\label{propexistence}
		Let $\mathcal{U}$ be a nondegenerate geometric complex in $\mathbb{R}^m$ and let $\alpha>0$. Let $(\mathcal{U}_n)_n$ be a sequence of random $M$--geometric complexes constructed using $\alpha$. There exist $n_0$, $R$, $c>0$ (depending on $\Gamma$ and $\alpha$ but not on $M$) such that for every $p\in M$ and for every $n\geq n_0$:
		\begin{equation*}
		\mathbb{P}\{\mathcal{U}_n\cap\hat{B}(p,Rn^{-1/m})\cong\mathcal{U}\}>c.
		\end{equation*}
	\end{prop}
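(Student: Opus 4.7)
The plan is to build a small, explicit ``good'' region in configuration space such that whenever the random sample places exactly $k$ points in it and no points in a surrounding buffer, the piece of $\mathcal{U}_n$ inside $\hat{B}(p,Rn^{-1/m})$ is forced to be isotopic to $\mathcal{U}$, and then to estimate the probability of this event from below by a Poisson--type calculation. First I would fix a representative $\mathcal{U}=\mathcal{U}(\{q_1,\ldots,q_k\},\alpha)\subset\mathbb{R}^m$ and choose $R_0>0$ with $\mathcal{U}\subset B(0,R_0)$; set $R:=R_0+2\alpha$, so that $B(0,R-\alpha)$ contains $\mathcal{U}$ with room to spare and $B(0,R+\alpha)\setminus B(0,R-\alpha)$ is a buffer annulus of width at least $\alpha$. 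By Remark \ref{rmkdiscriminant} the nondegenerate locus $R_k\subset H_k$ is open, so for a small enough $\varepsilon>0$ the product neighborhood $W:=\prod_{i=1}^{k}B(q_i,\varepsilon)$ has the property that every $(q'_1,\ldots,q'_k)\in W$ yields a Euclidean complex $\mathcal{U}(\{q'_1,\ldots,q'_k\},\alpha)$ that is nondegenerate, isotopic to $\mathcal{U}$, and still contained in $B(0,R-\alpha)$.

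Next I would transfer this picture to $M$ via the exponential map at $p$ followed by rescaling by $n^{1/m}$. Using compactness of $M$ (hence bounded curvature and strictly positive injectivity radius), there are $n_1$ and $C>0$ \emph{independent of} $p$ such that for $n\geq n_1$ the map $\exp_p$ is $(1+Cn^{-2/m})$--bi-Lipschitz on $B_{T_pM}(0,2Rn^{-1/m})$. Hence the rescaled image of any Riemannian $r$-ball with center in $\hat{B}(p,Rn^{-1/m})$ is arbitrarily close, for $n$ large, to a Euclidean $\alpha$-ball, and both nondegeneracy and isotopy type transfer between the two sides. Now define $E_n$ to be the event that (a) exactly $k$ of the sample points $p_1,\ldots,p_n$ lie in $\hat{B}(p,(R-\alpha)n^{-1/m})$ and their rescaled image forms an (unordered) set with one element in each $B(q_i,\varepsilon)$, and (b) no sample point lies in the annulus $\hat{B}(p,(R+\alpha)n^{-1/m})\setminus\hat{B}(p,(R-\alpha)n^{-1/m})$. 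On $E_n$ no ball of radius $r=\alpha n^{-1/m}$ centered outside $\hat{B}(p,(R+\alpha)n^{-1/m})$ can meet $\hat{B}(p,Rn^{-1/m})$, so $\mathcal{U}_n\cap\hat{B}(p,Rn^{-1/m})$ is the Riemannian complex generated by exactly the $k$ selected points, which by the previous step is isotopic to $\mathcal{U}$.

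A standard multinomial/Poisson estimate then gives the lower bound: using the uniform volume expansion $\vol(\hat{B}(p,sn^{-1/m}))=\vol(B(0,s))/n+o(1/n)$, the probability $\mathbb{P}(E_n)$ converges as $n\to\infty$ to the product of a strictly positive ``correct placement'' factor controlled by $\vol(W)$ and a strictly positive ``empty annulus'' factor controlled by the volume of the buffer; both are standard Poisson probabilities. Choose $c$ strictly below this limit and $n_0\geq n_1$ so that $\mathbb{P}(E_n)\geq c$ for all $n\geq n_0$ and all $p$. The main obstacle is securing \emph{uniformity in} $p$: the bi-Lipschitz constants of $\exp_p$ and the volume asymptotics for small Riemannian balls must be controlled by global invariants of $(M,g)$ so that a single pair $(n_0,c)$ works simultaneously at every basepoint. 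Compactness of $M$ delivers these uniform bounds, after which the estimate proceeds essentially as in the Euclidean Poisson model $\mathcal{P}_R$ of Section \ref{section: Random geometric complexes}.
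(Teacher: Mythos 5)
Your proposal is correct and follows essentially the same strategy as the paper's proof: fix a Euclidean representative of the isotopy class, use openness of the nondegenerate locus (Remark~\ref{rmkdiscriminant}) to get an $\varepsilon$-stable neighbourhood of generating points, transfer via $\exp_p$ and rescaling (uniformly in $p$ by compactness of $M$), define the event that the sample places one point near each target generator and none in a buffer annulus, and conclude by a multinomial estimate. The only cosmetic difference is that you work directly with $\alpha$-radius balls rather than radius-one balls followed by a dilation by $\alpha$, and you phrase the good event in an unordered way, which is a slightly cleaner bookkeeping of the same estimate.
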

	\begin{proof}The proof is similar to the proof of \cite[Proposition 1.2]{Antonio}. Here we sketch this proof for the sake of completeness, pointing out what is the main difference with \cite{Antonio}. 
	
	Assume that $\mathcal{U}\subset B(0, R')$ is constructed using balls of radius $r=1$:
	\be \mathcal{U}=\bigcap_{k=1}^\ell B(y_k, 1),\ee 
	set $R=\alpha R'$ and
	consider the sequence of maps:
	\be\psi_n:\hat{B}(p, Rn^{-1/m})\stackrel{\mathrm{exp}_p^{-1}}{\longrightarrow}B_{T_pM}(0, Rn^{-1/m})\stackrel{\mathrm{dilation}}{\longrightarrow}B_{T_pM}(0, R')\simeq B(0, R')\ee
	 For $n>$ large enough the map $\psi_n$ becomes a diffeomorphism and we denote by $\varphi_n$ its inverse.  \cite[Proposition 6.2]{Antonio} implies that there exist $\epsilon_0>0$ and $n_0>0$ such that if $\|\tilde{y}_k-y_k\|\leq \epsilon_0$ for every $k=1, \ldots, \ell$, then for $n\geq n_0$ the two complexes $\bigcup_{k=1}^\ell \hat{B}(\varphi_n(\tilde y_k), \alpha n^{-1/m})$ and $\mathcal{U}$ are isomorphic. In fact, because $\mathcal{U}$ is nondegenerate, possibly choosing $\epsilon_0>0$ even smaller, we can make sure that the complex $\bigcup_{k=1}^\ell \hat{B}(\varphi_n(\tilde y_k), \alpha n^{-1/m})$ belong to the same rigid isotopy class of $\mathcal{U}$, because this is an open condition.
	 
	One then proceeds considering the event:
\be E_{n}=\left\{\exists I_\ell\in\genfrac{\{ }{\}}{0pt}{}{n}{\ell}\,:\, \forall j\in I_\ell \quad p_{j}\in \psi_n^{-1}(B(y_j, \epsilon)), \text{ and} \quad \forall j\notin I_\ell \quad p_j\in \hat{B}(p, (R+\alpha)n^{-1/m})^c\right\}.\ee

Observe that:
\be E_n\implies \mathcal{U}_n\cap\hat{B}(p,Rn^{-1/m})\cong\mathcal{U},\ee
and in particular, in order to get the conclusion, it is enough to estimate from below the probability of $E_n$. This is done in the last lines of the proof of \cite[Proposition 1.2]{Antonio}.

%
\end{proof}

		\begin{ex}
			Let $C_{137}$ be the cycle with $137$ vertices and $137$ edges. By Proposition \ref{propexistence} there exist $n_0$, $R$ and $c>0$ (depending on $C_{137}$ and $\alpha$ but not on $M$) such that for every $p\in M$ and for every $n\geq n_0$:
			\begin{equation*}
			\mathbb{P}\{\Gamma_n\cap\hat{B}(p,Rn^{-1/m})\cong C_{137}\}>c.
			\end{equation*}
		\end{ex}
	\subsection{Proof of Theorem \ref{mainthm}}\label{sectionmainproof}
We split the statement of Theorem \ref{mainthm} into two parts. The first part, Theorem \ref{theo:main1}, states that the random measure $\Theta_R$ converges in probability to a deterministic measure $\Theta\in\mathcal{M}^1(\mathcal{G}/{\cong})$ supported on the set $\mathcal{G}(\R^m)/{\cong}$. The second part, Theorem \ref{theo:main2}, states that also the random measure $\Theta_n$ converges in probability to $\Theta$.

\subsubsection{The local model}
	\begin{prop}\label{prop2.1}
		For every $w\in \mathcal{G}(\R^m)/{\cong}$ there exists a constant $c_w>0$ such that the random variable
		\begin{equation*}
		c_{R,w}:=\frac{\mathcal{N}(\mathcal{P}_R,w)}{\vol(B(0,R))}
		\end{equation*}converges to $c_w$ in $L^1$ and almost surely as $R\rightarrow\infty$.
	\end{prop}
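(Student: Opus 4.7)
The plan is to adapt the Nazarov--Sodin style ergodic argument of \cite{Antonio} (where homotopy types were counted) to the finer isotopy setting, combining the translation invariance and ergodicity of the spatial Poisson process with the local integral geometry sandwich (Theorem \ref{teosandwich}(1)). By stationarity of $\mathcal{P}$ under $\R^m$--translations, for every fixed $r>0$ the quantity
$$\nu_w(r) \;:=\; \frac{\mathbb{E}\,\mathcal{N}(\mathcal{P},B(0,r);w)}{\vol B(0,r)}$$
is well defined, and I would take $c_w := \lim_{r\to\infty}\nu_w(r)$, whose existence is established en route by an approximate monotonicity: a component of $\mathcal{P}$ entirely contained in $B(0,r)$ is entirely contained in every larger ball, so $\nu_w(r)$ has a limit up to a boundary defect that will also be controlled below.

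Applying Theorem \ref{teosandwich}(1) to $\mathcal{P}$ inside $B_R$ and dividing by $\vol B_R$ gives, for $0<r<R$,
$$\frac{1}{\vol B_R}\int_{B_{R-r}}\frac{\mathcal{N}(\mathcal{P},B(x,r);w)}{\vol B(0,r)}\diff x \;\leq\; c_{R,w} \;\leq\; \frac{1}{\vol B_R}\int_{B_{R+r}}\frac{\mathcal{N}^{*}(\mathcal{P},B(x,r);w)}{\vol B(0,r)}\diff x.$$
For fixed $r$, the integrand is a bounded stationary functional of the translated Poisson process $\tau_{-x}\mathcal{P}$, so Wiener's multi--parameter pointwise ergodic theorem, together with the classical fact that $\R^m$ acts ergodically on the Poisson probability space, gives almost sure and $L^1$ convergence of the two spatial averages, as $R\to\infty$, to $\nu_w(r)$ and $\nu_w^{*}(r):=\mathbb{E}\,\mathcal{N}^{*}(\mathcal{P},B(0,r);w)/\vol B(0,r)$ respectively. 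Letting $r\to\infty$ afterwards, the boundary defect $\nu_w^{*}(r)-\nu_w(r)$ is controlled by components meeting $\partial B(0,r)$, which live in an annular layer whose relative volume vanishes; hence both envelopes meet at the common constant $c_w$, yielding almost sure and $L^1$ convergence $c_{R,w}\to c_w$. The $L^1$ part also uses that $c_{R,w}$ is dominated by the total normalized component count, whose expectation is uniformly bounded in $R$ via a standard Poisson moment estimate.

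Positivity $c_w>0$ then comes from the Euclidean analogue of Proposition \ref{propexistence}: there exist $R_0,c_0>0$ such that for every $p\in\R^m$ the probability that $\mathcal{P}\cap B(p,R_0)$ realizes a component of isotopy type $w$ is at least $c_0$. Integrating this lower bound against the left side of the sandwich over $B_{R-R_0}$ yields
$$\mathbb{E}\,c_{R,w} \;\geq\; \frac{\vol B_{R-R_0}}{\vol B_R}\cdot\frac{c_0}{\vol B(0,R_0)},$$
which is bounded away from zero for $R$ large; passing to the limit gives $c_w>0$.

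The main obstacle is the double limit $R\to\infty$ followed by $r\to\infty$: the two sandwich envelopes only meet in the inner limit, but the ergodic theorem is applied in the outer one, so one must quantify how fast $\nu_w^{*}(r)-\nu_w(r)\to 0$. This reduces to a tail bound on the spatial extent of a Poisson geometric component of type $w$; here one exploits that, for fixed $w$, such a component consists of a uniformly bounded number of balls of fixed radius $\alpha$ and therefore has deterministically bounded diameter, so the boundary contribution is truly of surface order $r^{m-1}$ and the defect decays like $1/r$. Controlling this uniformly in the sample and combining it with the bounded moments of the Poisson counting functional is the technically delicate point of the proof.
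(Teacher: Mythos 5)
Your proposal is correct and takes essentially the same route as the paper, which defers the convergence part to the integral-geometry-sandwich-plus-ergodic-theorem argument of \cite{Antonio} applied with the isotopy-type counting function in place of the homotopy-type one, and then establishes $c_w>0$ by a local existence estimate; your observation that a component of a fixed isotopy type $w$ has deterministically bounded diameter (hence the boundary defect $\nu_w^{*}(r)-\nu_w(r)$ is of surface order) is precisely what makes the double limit close. Your positivity argument via the Euclidean analogue of Proposition~\ref{propexistence} integrated against the lower sandwich envelope is a mild rephrasing of the paper's argument of summing a positive expectation over a packing of $B(0,R)$ by $\Theta(\vol B(0,R))$ disjoint congruent balls and using that the restrictions of the Poisson process to these balls are i.i.d.
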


	\begin{proof}[Proof of Proposition \ref{prop2.1}]
		Following the proof of \cite[Proposition 2.1]{Antonio} with $\mathcal{N}(\mathcal{P}_R,\tau,w)$ instead of $\mathcal{N}(\mathcal{P}_R,\tau,\gamma)$ and with the application of Theorem \ref{teosandwich} instead of \cite[Theorem 6.6]{Antonio}, one proves that there exists a constant $c_w$ such that
	\begin{equation*}
	\frac{\mathcal{N}(\mathcal{P}_R,B(0,R);w)}{\vol(B(0,R))}\longrightarrow c_w.
	\end{equation*}Since $\mathcal{P}_R\subset B(0,R)$, this implies that 
	\begin{equation*}
	\frac{\mathcal{N}(\mathcal{P}_R,w)}{\vol(B(0,R))}\longrightarrow c_w.
	\end{equation*}We now have to prove that $c_w>0$. Since $w\in \mathcal{G}(\R^m)/{\cong}$, given $\mathcal{U}\in w$ there exist $\beta>0$ and $y_1,\ldots,y_n\in\mathbb{R}^m$ such that
	\begin{equation*}
	    \mathcal{U}\cong\mathcal{U}(\{y_1,\ldots,y_n\},\beta).
	\end{equation*}Let $R_1$ be such that $\mathcal{U}\subset B(0,R_1)$ and choose $r$ such that $r\beta=\alpha$, where $\alpha$ is the constant that we used for constructing the random complex $\mathcal{P}$. We can then rescale $\mathcal{U}$ in $B(0,r\cdot R_1)$ so that it is constructed on radius $\alpha$. Now, since $\mathcal{U}$ is nondegenerate, there exists $\varepsilon>0$ such that, if for every $i$ we have that $\|y_i-y'_i\|<\varepsilon$, then the complex 
	\begin{equation*}
	    \mathcal{U}'(\{y_1',\ldots,y_n'\},\alpha )
	\end{equation*}
is isotopic to $\mathcal{U}$.
	 Take $K_R=k\cdot\mathrm{vol}(B(0, R))$ disjoint balls $\{B(y_j,r\cdot R_1)\}_{j=1,\ldots,K_R}$ inside $B(0,R)$ with $c>0$. Then
	\begin{equation*}
	\frac{\mathcal{N}(\mathcal{P}_R,B(0,R);w)}{\vol(B(0,R))}\geq \frac{1}{\vol(B(0,R))}\cdot\sum_{j=1}^{K_R}\mathcal{N}(\mathcal{P}_R,B(y_j,r\cdot R_1);w).
	\end{equation*}Therefore
	\begin{align*}
	\mathbb{E}\Biggl(\frac{\mathcal{N}(\mathcal{P}_R,B(0,R);w)}{\vol(B(0,R))}\Biggr)&\geq \mathbb{E}\Biggr(\frac{1}{\vol(B(0,R))}\cdot\sum_{j=1}^{K_R}\mathcal{N}(\mathcal{P}_R,B(y_j,r\cdot R_1);w)\Biggr)\\
	&=\frac{K_R}{\vol(B(0,R))}\cdot\mathbb{E}\Biggl(\mathcal{N}(\mathcal{P}_R,B(y,r\cdot R_1);w)\Biggr),
	\end{align*}since the random variables $\mathcal{N}(\mathcal{P}_R,B(y_j,r\cdot R_1);w)$ are identically distributed by the fact that the balls are disjoint. Now,
	\begin{equation*}
	\frac{K_R}{\vol(B(0,R))}\geq 	\frac{k\cdot \mathrm{vol}(B(0, R))}{\vol(B(0,R))}=k>0
\end{equation*} 
and  
	\begin{equation*}
	\mathbb{E}\Biggl(\mathcal{N}(\mathcal{P}_R,B(y,r\cdot R_1);w)\Biggr)>0.
	\end{equation*}Therefore, $c_w>0$.
	\end{proof}
			Proposition \ref{prop2.1} allows us to deduce the following theorem.
	\begin{theorem}\label{theo:main1}There exists $\Theta\in\mathcal{M}^1(\mathcal{G}/{\cong})$ such that
	\begin{equation*}
	   \Theta_R\xrightarrow[R\rightarrow \infty]{\mathbb{P}}\Theta.
	\end{equation*}
		 Also, $\supp(\Theta)=\mathcal{G}(\R^m)/{\cong}$.
\end{theorem}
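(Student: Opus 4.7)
The plan is to read off $\Theta$ directly from the constants produced by Proposition \ref{prop2.1}. For every $w\in \mathcal{G}(\R^m)/{\cong}$ let $c_w>0$ be the constant given by that proposition, and for $w\notin \mathcal{G}(\R^m)/{\cong}$ set $c_w:=0$ (consistently with the fact that $\mathcal{P}_R\subset \R^m$ forces $\Theta_R$ to be supported on $\mathcal{G}(\R^m)/{\cong}$). An identical application of the integral geometry sandwich of Theorem \ref{teosandwich} to the ``total number of components'' functional (rather than to ``components of a fixed type'') yields a constant $\beta>0$ such that $b_0(\mathcal{P}_R)/\vol(B(0,R))\to \beta$ in $L^1$ and a.s. I then define $\Theta(\{w\}):=c_w/\beta$ for every $w\in \mathcal{G}/{\cong}$.

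The main obstacle is verifying that $\Theta$ is a probability measure, i.e.\ that $\sum_{w}c_w=\beta$. The inequality $\sum_{w}c_w\leq \beta$ is immediate from Fatou applied to the identity $\sum_{w}c_{R,w}=b_0(\mathcal{P}_R)/\vol(B(0,R))$. The reverse inequality is a tightness statement: no mass escapes to infinity in $\mathcal{G}/{\cong}$ as $R\to \infty$. To prove it I would argue that a component of $\mathcal{P}_R$ built using more than $k$ balls forces a cluster of at least $k$ Poisson points in a region of diameter bounded in terms of $\alpha$, an event whose probability decays rapidly in $k$; combined with the fact that there are only finitely many isotopy classes of complexes built using at most $k$ balls (a quantitative strengthening of Theorem \ref{teocountable} via Remark \ref{rmkdiscriminant}), this shows that the expected contribution to $b_0(\mathcal{P}_R)/\vol(B(0,R))$ from isotopy classes outside some finite set can be made arbitrarily small uniformly in $R$. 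This is the same tightness mechanism as in \cite{Antonio}, which I would transfer essentially verbatim, with isotopy classes playing the role of homotopy classes.

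Once $\Theta$ is known to be a probability measure, convergence in total variation is routine. For every fixed $w\in \mathcal{G}(\R^m)/{\cong}$, Proposition \ref{prop2.1} together with the convergence of $b_0(\mathcal{P}_R)/\vol(B(0,R))$ to $\beta$ gives $\Theta_R(\{w\})\xrightarrow{\mathbb{P}}\Theta(\{w\})$. Given $\varepsilon>0$ I pick a finite $F\subset \mathcal{G}(\R^m)/{\cong}$ with $\Theta(F)>1-\varepsilon$ and use the standard identity $d_{\tv}(\Theta_R,\Theta)=\frac{1}{2}\sum_{w}|\Theta_R(\{w\})-\Theta(\{w\})|$, valid for probability measures on a countable set. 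Splitting the sum into $w\in F$ and $w\notin F$, the finite part over $F$ vanishes in probability, while the two tails are controlled by $\sum_{w\notin F}\Theta(\{w\})<\varepsilon$ and $\sum_{w\notin F}\Theta_R(\{w\})=1-\Theta_R(F)$, which converges in probability to $1-\Theta(F)<\varepsilon$. This yields $d_{\tv}(\Theta_R,\Theta)\xrightarrow{\mathbb{P}}0$. Finally, the support statement $\supp(\Theta)=\mathcal{G}(\R^m)/{\cong}$ is immediate from the positivity part of Proposition \ref{prop2.1} and from $\mathcal{P}_R\subset \R^m$.
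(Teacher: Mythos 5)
Your proposal is correct and follows essentially the same approach as the paper, which simply defers to the proof of Theorem 1.3 in \cite{Antonio} with isotopy classes in place of homotopy classes: define $\Theta(\{w\})=c_w/\beta$ via Proposition \ref{prop2.1}, establish that $\Theta$ is a probability measure via a one-sided Fatou estimate together with a tightness argument (no mass escapes to isotopy types built from many balls), and then deduce total-variation convergence from the pointwise convergence $\Theta_R(\{w\})\to\Theta(\{w\})$ together with the finite/tail split. One small remark: the finiteness of the set of isotopy types realized by at most $k$ balls of radius $\alpha$ in $\R^m$ (needed for your tightness step) does not actually follow from Theorem \ref{teocountable}, which only yields countability; it requires the additional observation that in the Euclidean fixed-radius case the discriminant of Remark \ref{rmkdiscriminant} is semialgebraic, so its complement in $(\R^m)^k$ has finitely many connected components. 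This is true and is the content one would import from \cite{Antonio}, but it is worth being explicit that it is a stronger input than the countability statement proved here.
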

\begin{proof}The proof is the same as the one of \cite[Theorem 1.3]{Antonio}, replacing the homotopy type counting function with the isotopy type one. 
\end{proof}
	\subsubsection{Riemannian case}
	\begin{theorem}\label{teo3.1}
		Let $p\in M$. For every $\delta>0$ and for $R>0$ sufficiently big there exists $n_0$ such that for every $w\in \mathcal{G}(\R^m)/{\cong}$ and for $n\geq n_0$:
		\begin{equation*}
		\mathbb{P}\{\mathcal{N}(\mathcal{P}_R,B(0,R);w)=\mathcal{N}(\mathcal{U}_n,\hat{B}(p,Rn^{-1/m});w)\}\geq 1-\delta.
		\end{equation*}
	\end{theorem}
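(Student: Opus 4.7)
The approach is to rescale the small Riemannian ball $\hat{B}(p, Rn^{-1/m})$ to a fixed-size Euclidean ball via the exponential map at $p$, and couple the rescaled binomial process with the Poisson process on $\R^m$. I would begin with the localization: the count $\mathcal{N}(\mathcal{U}_n, \hat{B}(p, Rn^{-1/m}); w)$ is determined by the points of $\{p_1,\ldots,p_n\}$ lying in the slightly enlarged ball $\hat{B}(p, (R+\alpha)n^{-1/m})$, since a component entirely contained in $\hat{B}(p, Rn^{-1/m})$ has all centers in $\hat{B}(p, (R-\alpha)n^{-1/m})$ and can interact only with centers at Riemannian distance at most $(R+\alpha)n^{-1/m}$ from $p$. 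The same reasoning shows that $\mathcal{N}(\mathcal{P}_R, B(0, R); w)$ depends only on $P \cap B(0, R+\alpha)$.

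Define the diffeomorphism $\Psi_n := n^{1/m}\exp_p^{-1} : \hat{B}(p, (R+\alpha)n^{-1/m}) \to B(0, R+\alpha) \subset T_p M \simeq \R^m$, valid for $n$ large by Remark \ref{remark:smooth}. I would couple $\Psi_{n*}(\{p_1,\ldots,p_n\} \cap \hat{B}(p, (R+\alpha)n^{-1/m}))$ with $P \cap B(0, R+\alpha)$ in two steps. The cardinality of the former is $\mathrm{Binomial}(n, v_n)$ with $n v_n = \omega_m (R+\alpha)^m + O(n^{-2/m})$ by the Riemannian volume expansion at $p$; by the standard Poisson approximation, this is within total variation $O(n^{-1})$ of $\mathrm{Poisson}(\omega_m (R+\alpha)^m)$. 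Conditional on the cardinality, the pushforward density on $B(0, R+\alpha)$ equals $(n v_n)^{-1}|\det d\exp_p(n^{-1/m}\cdot)| = \omega_m^{-1} (R+\alpha)^{-m}(1+O(n^{-2/m}))$ uniformly, hence converges in total variation to the uniform density. Together, for $n$ large, these produce a coupling under which the two configurations coincide as multisets with probability at least $1-\delta/2$.

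On the event that the configurations agree, call $Y \subset B(0, R+\alpha)$ the common configuration. The rescaled Riemannian balls $\Psi_n(\hat{B}(\Psi_n^{-1}(y), \alpha n^{-1/m}))$ converge, uniformly in $y \in B(0, R+\alpha)$, to the Euclidean balls $B(y, \alpha)$ in $C^\infty$ at rate $O(n^{-2/m})$, because the pulled-back metric $n^{2/m}(\Psi_n^{-1})^*g$ converges in $C^\infty$ on $B(0, R+\alpha)$ to the Euclidean metric. Using the same perturbation argument as in the proof of Proposition \ref{propexistence} (ultimately via \cite[Proposition 6.2]{Antonio}), and restricting to a good set $G$ (of Poisson probability at least $1-\delta/2$) on which the Euclidean configuration is at uniform distance $\eta_0 > 0$ both from the discriminant of Remark \ref{rmkdiscriminant} and from $\partial B(0, R-\alpha)$, for $n$ sufficiently large the $O(n^{-2/m})$ perturbation preserves transversality of every pair of intersecting ball boundaries, preserves all combinatorial adjacencies, and preserves whether each connected component lies entirely inside $B(0, R)$. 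It also realizes each Riemannian component as being rigidly isotopic (in the sense of Definition \ref{def:isotopy}) to a standard Euclidean component in the same class of $\mathcal{G}(\R^m)/{\cong} \subset \mathcal{G}(M)/{\cong}$. On the intersection of the coupling event and $G$, which has probability at least $1-\delta$, the counts $\mathcal{N}(\mathcal{U}_n, \hat{B}(p, Rn^{-1/m}); w)$ and $\mathcal{N}(\mathcal{P}_R, B(0, R); w)$ therefore coincide for every $w$ simultaneously.

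The main obstacle is producing the good set $G$ with uniform margins: one needs the Poisson probability that the configuration lies $\eta_0$-close to the discriminant or to $\partial B(0, R-\alpha)$ to be made arbitrarily small, uniformly in the (random) number of points. This follows from the absolute continuity of the Poisson $k$-point distribution with respect to Lebesgue measure on $B(0, R+\alpha)^k$, combined with the fact that both the discriminant and the boundary-proximity locus are closed subsets of Lebesgue measure zero, and with the finiteness of all moments of the total point count in $B(0, R+\alpha)$, which allows one to sum the exceptional probabilities over $k$.
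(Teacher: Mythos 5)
Your proof is correct and takes essentially the same approach as the paper: the paper defers to the coupling argument of \cite[Theorem~3.1]{Antonio} (rescaled exponential map, Poisson approximation of the binomial point process) and simply notes that the nondegeneracy established there upgrades the conclusion from homotopy equivalence to rigid isotopy, which is exactly the discriminant-avoidance step you make explicit via the good set $G$. You spell out the Poissonization, the $C^\infty$ convergence of the rescaled metric, and the uniform margin from $\Sigma_n$ and from $\partial B(0,R-\alpha)$ in more detail than the paper does, but the underlying strategy is identical.
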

		\begin{proof}The proof is the same as the one of \cite[Theorem 3.1]{Antonio}, with the following differences:
	\begin{itemize}
		\item In point (3), instead of considering the homotopy equivalence between the unions of the balls, we consider the isotopy equivalences between their $k$--skeleta. This is allowed because, at the end of the proof of point (3), it is proved that the combinatorics of the covers are the same.
		\item After assuming point (1), point (2) and the modified point (3), we can say that the $k$--skeleta of the two unions of balls are isotopic and also the unions of all the components entirely contained in $B(0,R)$ (respectively $\hat{B}(p,Rn^{-1/m})$) have isotopic $k$--skeleta. In particular, the number of components of a given isotopy class $w$ is the same for both sets with probability at least $1-\delta$.
	\end{itemize}
		\end{proof}
		\begin{cor}\label{cor3.2}
		For each $w\in \mathcal{G}(\R^m)/{\cong}$, $\alpha>0$, $x\in M$ and $\varepsilon>0$, we have
		\begin{equation*}
		    \lim_{R\rightarrow\infty}\limsup_{n\rightarrow\infty}\mathbb{P}\biggl\{\biggl|\frac{\mathcal{N}(\mathcal{U}_n,B(x,Rn^{-1/d});w)}{\vol (B(0,R))}-c_w\biggr|>\varepsilon\biggr\}=0.
		\end{equation*}
				\end{cor}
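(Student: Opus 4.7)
The plan is to combine Proposition \ref{prop2.1}, which pins down the Euclidean Poisson count, with Theorem \ref{teo3.1}, which couples the Riemannian local count to the Euclidean one on a common probability space, and then to conclude via a union bound and a triangle inequality.

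Concretely, I would fix $\varepsilon>0$ and $\delta>0$ and proceed in two steps. First, since $\mathcal{P}_R$ consists by definition of those components of $\mathcal{P}$ entirely contained in $B(0,R)$, one has the identity $\mathcal{N}(\mathcal{P}_R,w)=\mathcal{N}(\mathcal{P}_R,B(0,R);w)$. Proposition \ref{prop2.1} (which gives almost sure and $L^1$ convergence, hence convergence in probability) then furnishes $R_0=R_0(\varepsilon,\delta)$ such that for every $R\geq R_0$
\begin{equation*}
\mathbb{P}\Bigl\{\Bigl|\tfrac{\mathcal{N}(\mathcal{P}_R,B(0,R);w)}{\vol(B(0,R))}-c_w\Bigr|>\varepsilon\Bigr\}<\tfrac{\delta}{2}.
\end{equation*}
Second, for any such $R$ I would invoke Theorem \ref{teo3.1} with $p=x$ and tolerance $\delta/2$: this produces $n_0=n_0(R,\delta)$ such that for every $n\geq n_0$ the two counts $\mathcal{N}(\mathcal{P}_R,B(0,R);w)$ and $\mathcal{N}(\mathcal{U}_n,\hat{B}(x,Rn^{-1/m});w)$ coincide with probability at least $1-\delta/2$. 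A union bound over the two ``bad'' events then shows that for $n\geq n_0$ the probability appearing in the statement of the corollary is at most $\delta$. Taking $\limsup_n$ and then $R\to\infty$, and letting $\delta\downarrow 0$, yields the conclusion.

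The argument is therefore essentially a triangle inequality mediated by a coupling, and no new probabilistic machinery is required beyond what is already provided by the two earlier results. The main subtlety — which is also why the conclusion is phrased as $\lim_R\limsup_n$ rather than as a single limit — is the \emph{order of limits}: one must first choose $R$ large enough for the Euclidean count to be close to $c_w$, and only afterwards may the threshold $n_0=n_0(R,\delta)$ be selected so that Theorem \ref{teo3.1}'s coupling succeeds at that fixed scale $R$. Everything else is bookkeeping that mirrors the corresponding passage from the local to the global model already carried out in \cite{Antonio}.
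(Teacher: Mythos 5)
Your proposal is correct and follows essentially the same route as the paper, which simply cites the analogous argument from \cite{Antonio}: combine the $L^1$/a.s.\ convergence of the rescaled Poisson count (Proposition \ref{prop2.1}) with the coupling of Theorem \ref{teo3.1} via a union bound, choosing $R$ first and $n_0=n_0(R,\delta)$ afterward. The only cosmetic omission is that $R$ must simultaneously be large enough for both Proposition \ref{prop2.1} and for Theorem \ref{teo3.1}'s hypothesis ``$R>0$ sufficiently big,'' but this is trivially arranged and does not affect the argument.
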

					\begin{proof}
		The  proof  is  the  same  as  the  one  of  \cite[Corollary 3.2]{Antonio},  with  the application of Theorem \ref{teo3.1} and Proposition \ref{prop2.1} instead of \cite[Theorem 3.1]{Antonio} and \cite[Proposition 2.1]{Antonio}.
		\end{proof}
		\begin{theorem}\label{teo4.1}
		  For every $w\in \mathcal{G}(\R^m)/{\cong}$, the random variable
		  \begin{equation*}
		      c_{n,w}:=\frac{\mathcal{N}(\mathcal{U}_n;w)}{n}
		  \end{equation*}converges in $L^1$ to $c_w\cdot\vol(M)$, where $c_w$ is the constant appearing in Proposition \ref{prop2.1}. In particular, this implies that the random variable
		  \begin{equation*}
		      c_{n}:=\frac{\mathcal{N}(\mathcal{U}_n)}{n},
		  \end{equation*}i.e. when we consider all components with no restriction on their type, converges in $L^1$ to $c$, where $c=\sum_{w\in \mathcal{G}(\R^m)/{\cong}}c_w>0$.
		\end{theorem}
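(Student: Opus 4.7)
The plan is to adapt the proof of \cite[Theorem 4.1]{Antonio} directly, substituting isotopy types for homotopy types throughout; the three essential ingredients---the Integral Geometry Sandwich (Theorem \ref{teosandwich}), the local/global comparison (Theorem \ref{teo3.1}) and the pointwise estimate (Corollary \ref{cor3.2})---have already been generalized to isotopy counts in the previous sections.

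First, I would apply the global Integral Geometry Sandwich (Theorem \ref{teosandwich}(2)) at the mesoscopic scale $r=Rn^{-1/m}$. For any $\varepsilon>0$, any $R>0$ and any $n$ large enough that $r<\eta(\varepsilon)$, this yields
\begin{equation*}
(1-\varepsilon)\int_M\frac{\mathcal{N}(\mathcal{U}_n,\hat B(x,r);w)}{\vol(\hat B(x,r))}\,dx\;\leq\;\mathcal{N}(\mathcal{U}_n;w)\;\leq\;(1+\varepsilon)\int_M\frac{\mathcal{N}^*(\mathcal{U}_n,\hat B(x,r);w)}{\vol(\hat B(x,r))}\,dx.
\end{equation*}
By compactness and smoothness of $M$, the volume expansion $n\vol(\hat B(x,Rn^{-1/m}))=\vol(B(0,R))(1+o(1))$ holds uniformly in $x$. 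Dividing through by $n$ therefore rewrites both sides, up to an overall factor $1+o(1)$, as $M$--integrals of exactly the random variables studied in Corollary \ref{cor3.2}.

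Next, I would take expectations, apply Fubini and invoke Corollary \ref{cor3.2} to see that the pointwise expectation of the integrand tends to $c_w$ as $n\to\infty$ for each fixed $x\in M$ and each fixed $R$. The starred and unstarred versions give the same limit, since the contribution of components crossing the boundary of $\hat B(x,r)$ is of order $R^{m-1}$ and becomes negligible after normalizing by $\vol(B(0,R))\sim R^m$ and letting $R\to\infty$. Sending $n\to\infty$, then $R\to\infty$, then $\varepsilon\to 0$ in this order yields $\mathbb{E}[c_{n,w}]\to c_w\vol(M)$. Finally, since $0\leq c_{n,w}\leq 1$ (each component of $\mathcal{U}_n$ contains at least one of the $n$ sampled points), a variance estimate on the sandwich integrand---again following the Chebyshev-based pattern of \cite{Antonio}---gives convergence of $c_{n,w}$ in probability to the same constant, and boundedness upgrades this to $L^1$ convergence.

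The ``in particular'' statement follows from the first part: writing $c_n=\sum_w c_{n,w}$ as a sum of non-negative terms with $c_n\leq 1$, and with each summand converging in $L^1$ to $c_w\vol(M)$, dominated convergence for series gives $c_n\to c:=\sum_w c_w\vol(M)$ in $L^1$; the positivity $c>0$ is immediate from the existence of at least one $w$ with $c_w>0$ (Proposition \ref{prop2.1}). The main technical obstacle, as already in \cite{Antonio}, is the careful control of the boundary error $\mathcal{N}^*-\mathcal{N}$ and the justification of the iterated limits in $n$, $R$ and $\varepsilon$; Theorem \ref{teo3.1} is precisely the quantitative tool that underwrites this interchange.
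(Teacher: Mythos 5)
Your overall strategy matches the paper's: the authors state explicitly that the proof is the same as \cite[Theorem 4.1]{Antonio}, substituting the isotopy-type Integral Geometry Sandwich (Theorem \ref{teosandwich}) and Corollary \ref{cor3.2} for their homotopy-type analogues, and your outline---apply the global sandwich at scale $Rn^{-1/m}$, use the uniform volume expansion to pass from $\vol(\hat B(x,r))$ to $\vol(B(0,R))/n$, invoke Corollary \ref{cor3.2} for the pointwise limit, dispose of the starred/unstarred discrepancy in the $R\to\infty$ limit, and upgrade convergence in probability to $L^1$ via the uniform bound $0\leq c_{n,w}\leq 1$---is precisely that proof.

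The one place your write-up is too glib is the ``in particular'' clause. You assert that since each $c_{n,w}\to c_w\vol(M)$ in $L^1$ and $c_n=\sum_w c_{n,w}\leq 1$, ``dominated convergence for series'' yields $c_n\to\sum_w c_w\vol(M)$. But the uniform bound $\sum_w c_{n,w}\leq 1$ is not a dominating-sequence condition on the individual terms: one needs an $n$-independent summable majorant $g_w\geq c_{n,w}$, or, equivalently, a tightness estimate showing that $\mathbb{E}\sum_{w\notin F}c_{n,w}$ is small uniformly in $n$ once $F$ is a large enough finite set (compare Lemma \ref{lemg1}, which plays exactly this role in Section \ref{section:Random geometric graphs}, and \cite[Lemma 6.3]{Antonio}). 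Without this, termwise $L^1$ convergence of a nonnegative series bounded by $1$ does not force convergence of the sum---mass can escape to ever-rarer types. The cleaner route, and the one consistent with how \cite{Antonio} is organized, is to run the sandwich argument once more with the unrestricted counting function $\mathcal{N}(\mathcal{U}_n)$ to obtain $c_n\to c$ directly, and then identify $c=\sum_w c_w$ via the tightness estimate. So: your main argument is correct and is the paper's argument; the ``in particular'' deduction needs either the direct sandwich for $\mathcal{N}(\mathcal{U}_n)$ or an explicit tightness lemma in place of the dominated-convergence shortcut.
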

		\begin{proof}
		The proof is the same as the one of \cite[Theorem 4.1]{Antonio}, with the application of Theorem \ref{teosandwich} instead of \cite[Theorem 6.7]{Antonio} and the application of Corollary \ref{cor3.2} instead of \cite[Corollary 3.2]{Antonio}. 
		\end{proof}
		\begin{theorem}\label{theo:main2}The measure $\Theta\in\mathcal{M}^1(\mathcal{G}/{\cong})$ appearing in Theorem \ref{theo:main1} is such that 
		\begin{equation*}
		   \Theta_n\xrightarrow[n\rightarrow \infty]{\mathbb{P}}\Theta.
		\end{equation*}
		\end{theorem}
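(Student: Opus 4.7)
The plan is to leverage Theorem~\ref{teo4.1}, which already controls the asymptotic frequency of each isotopy type, and combine it with the tightness of the limit measure $\Theta$ provided by Theorem~\ref{theo:main1}. Since both $\Theta_n$ and $\Theta$ are probability measures supported on the countable set $\mathcal{G}/{\cong}$, the total variation distance is controllable via a finite-truncation argument of the form
\begin{equation*}
d_{\tv}(\Theta_n,\Theta)\leq\sum_{w\in F}|\Theta_n(\{w\})-\Theta(\{w\})|+\Theta_n(F^c)+\Theta(F^c),
\end{equation*}
valid for every finite $F\subset\mathcal{G}/{\cong}$.

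First I would establish pointwise convergence. For every $w\in\mathcal{G}(\R^m)/{\cong}$ one has
\begin{equation*}
\Theta_n(\{w\})=\frac{\mathcal{N}(\mathcal{U}_n;w)}{\mathcal{N}(\mathcal{U}_n)}=\frac{c_{n,w}}{c_n}.
\end{equation*}
By Theorem~\ref{teo4.1}, $c_{n,w}\to c_w$ and $c_n\to c>0$ in $L^1$, hence in probability. The continuous mapping theorem applied to $(x,y)\mapsto x/y$ at $(c_w,c)$ then gives $\Theta_n(\{w\})\xrightarrow{\mathbb{P}}c_w/c=\Theta(\{w\})$, the last equality being read off from Theorem~\ref{theo:main1}.

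Next, given $\varepsilon>0$, I would use the tightness of $\Theta$: since $\Theta$ is a probability measure on the countable set $\mathcal{G}(\R^m)/{\cong}$, there is a finite $F\subset\mathcal{G}(\R^m)/{\cong}$ with $\Theta(F^c)<\varepsilon/4$. Applying the pointwise convergence above to the finitely many $w\in F$ shows that $\sum_{w\in F}|\Theta_n(\{w\})-\Theta(\{w\})|<\varepsilon/4$ with probability tending to $1$. For the tail, note that $\Theta_n(F^c)=1-\sum_{w\in F}\Theta_n(\{w\})$ converges in probability to $1-\Theta(F)=\Theta(F^c)<\varepsilon/4$, hence $\Theta_n(F^c)<\varepsilon/2$ with probability tending to $1$. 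Combining these bounds in the displayed inequality yields $\mathbb{P}(d_{\tv}(\Theta_n,\Theta)\geq\varepsilon)\to 0$.

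The only substantive point is the tightness step: without the fact that $\Theta$ is a probability measure supported on $\mathcal{G}(\R^m)/{\cong}$, there would be no way to replace the supremum over arbitrary subsets by a finite sum plus a controllable tail. This is precisely the content of Theorem~\ref{theo:main1}, so the obstacle is already resolved upstream. A minor technicality is that isotopy types in $\mathcal{G}(M)/{\cong}\setminus\mathcal{G}(\R^m)/{\cong}$ carry no mass under $\Theta$; their $\Theta_n$-mass equals $1-\sum_{w\in\mathcal{G}(\R^m)/{\cong}}c_{n,w}/c_n$, which by Theorem~\ref{teo4.1} vanishes in probability and is therefore absorbed into the same tail estimate $\Theta_n(F^c)$.
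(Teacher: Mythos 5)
Your argument is correct and follows the same basic strategy the paper delegates to \cite[Theorem 1.1]{Antonio}: read off the atoms of $\Theta_n$ via $\Theta_n(\{w\})=c_{n,w}/c_n$, invoke Theorem~\ref{teo4.1} for $L^1$ (hence in-probability) convergence of $c_{n,w}$ and $c_n$, and upgrade to total-variation convergence by a finite truncation. The only notable difference is how the tail $\Theta_n(F^c)$ is controlled. You use the complementation identity $\Theta_n(F^c)=1-\sum_{w\in F}\Theta_n(\{w\})$, so that the tail mass is forced to be small once the finitely many atoms in $F$ have nearly converged --- a trick that exploits the fact that both $\Theta_n$ and $\Theta$ are probability measures. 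The proof in \cite{Antonio} (mirrored in this paper's proof of Theorem~\ref{teomu} via Lemma~\ref{lemg1}) instead establishes a uniform-in-$n$ bound on $\mathbb{E}\sum_{w\notin F}\Theta_n(\{w\})$ as a separate lemma. Your complementation shortcut is cleaner and avoids that extra lemma; the price is that it leans more heavily on the pointwise identification $\Theta(\{w\})=c_w/c$ and on the observation (which you correctly flag) that the $\Theta_n$-mass on isotopy types outside $\mathcal{G}(\R^m)/{\cong}$ vanishes because $c_n\to\sum_{w\in\mathcal{G}(\R^m)/{\cong}}c_w$. Both routes are sound; yours is a mild streamlining of the argument rather than a fundamentally different one.
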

		\begin{proof}
The proof is the same as the one of \cite[Theorem 1.1]{Antonio}, with the following differences:
	\begin{itemize}
	\item We apply Theorem \ref{teo4.1} instead of \cite[Theorem 4.1]{Antonio};
	\item We use $w\in \mathcal{G}(\R^m)/{\cong}$ instead of $\gamma\in \mathcal{G}$, $\Theta_n$ instead of $\hat{\mu}_n$ and $\mathcal{G}(M)/{\cong}$ instead of $\hat{\mathcal{G}}$.
	\end{itemize}
	\end{proof}
	\section{Geometric graphs}\label{section:Geometric graphs}
	We specialize the previous discussion to the case $k=1$, and consider
	\begin{equation*}
	\mathcal{G}^{(1)}=\{\text{isomorphism classes of connected, nondegenerate $M$--geometric graphs}\}.
	\end{equation*}
	\begin{rmk}\label{rem:open}The set of $M$--geometric graphs defined using closed balls equals the set of $M$--geometric graphs defined using open balls. To see this, assume that a geometric graph $\Gamma=(V(\Gamma),E(\Gamma))$ is defined using closed balls of radius $r$. Then, for each pair of distinct vertices $(p_i,p_j)$,
		\begin{equation*}
		(p_i,p_j)\in E(\Gamma) \iff d(p_i,p_j)\leq r.
		\end{equation*}Now, choose $\varepsilon\geq 0$ small enough that, for each pair of distinct vertices $(p_i,p_j)$,
		\begin{equation*}
		(p_i,p_j)\in E(\Gamma) \iff d(p_i,p_j)< r+\varepsilon.
		\end{equation*}Therefore, $\Gamma$ can be constructed as a $M$--geometric graph using open balls of radius $r+\varepsilon$. The inverse implication is analogous. 
	\end{rmk}
	
	In the case $M=\mathbb{R}^m$, the problem of describing the set $\mathcal{G}^{(1)}(\mathbb{R}^m)$ is equivalent to asking which graphs are realizable as $\mathbb{R}^m$--geometric graphs in a given dimension $m$. There is a vast literature about this problem and, commonly, geometric graphs realizable in dimension $m$ are called \emph{$m$--sphere graphs} while the minimal dimension $m$ such that a given graph is a $m$--sphere graph is called its \emph{sphericity}. In \cite{sphericity1} it is proved that every graph has finite sphericity; in \cite{sphericity2} the authors prove that the problem of deciding, given a graph $\Gamma$, whether $\Gamma$ is a $m$--sphere is NP-hard for all $m>1$. We can also observe that, for each $m>0$, there are graphs that are not $m$--sphere graphs. To see this, consider the \emph{kissing number} $k(m)$ in dimension $m$, defined as the number of non-overlapping unit spheres that can be arranged such that they each touch a common unit sphere. Consider the star graph with a central vertex connected to $n$ external vertices, where $n>k(m)$. In order to have a realization of dimension $m$ of this graph, we need a central sphere that touches $n$ spheres which do not touch each other. Since $n>k(m)$, this is not possible.
		\begin{figure}
		\begin{center}
			\includegraphics[width=7cm]{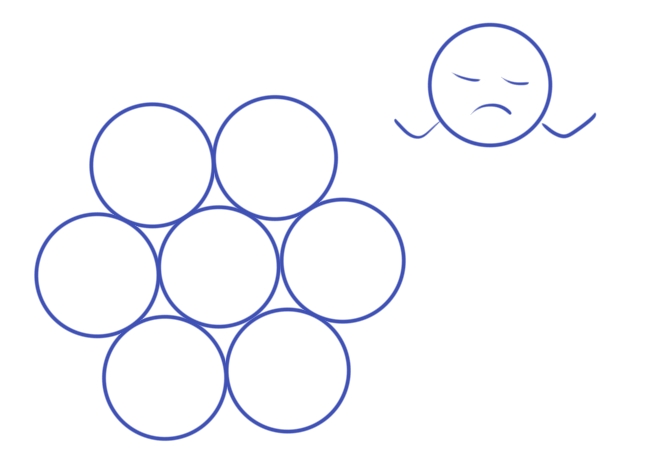}
		\end{center}
		\caption{$7$ kissing spheres in dimension $2$. The eighth sphere doesn't know where to go.}\label{Figurekissing}
	\end{figure}
	\begin{ex}
		In dimension $2$, the kissing number is $6$, as shown in Figure \ref{Figurekissing}. Therefore, any star graph $S_n$ on $n+1$ vertices, with $n>6$, is not realizable in dimension $2$ as a sphere graph.
	\end{ex}

	In the particular case of $m=1$, $1$--sphere graphs are called \emph{indifference graphs}, \emph{unit interval graphs} and there are many characterizations of such graphs \cite{unitgraphs1, unitgraphs2, unitgraphs3, unitgraphs4, unitgraphs5, unitgraphs6}. A classical characterization is due to Roberts and Wegner \cite{unitgraphs2,unitgraphs5,unitgraphs3} and it characterizes unit interval graphs by the absence of certain forbidden subgraphs, this is recalled in Theorem \ref{lekkerkerker}.
	\begin{theorem}[Roberts and Wegner]\label{lekkerkerker}
		A graph is a unit interval graph, i.e. it's an element of $\mathcal{G}^{(1)}(\mathbb{R})$, if and only if it does not contain any cycle of length at least four and any of the graphs shown in Figure \ref{fig:proi} as induced subgraph.
	\end{theorem}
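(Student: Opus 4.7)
The plan is to establish the equivalence by proving both directions, with the forward (``only if'') direction being a direct geometric check on each forbidden configuration and the reverse (``if'') direction being a combinatorial construction of an explicit unit interval representation.

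For the necessary direction, suppose $\Gamma$ admits a unit interval representation $v \mapsto I_v = [x_v, x_v + 1]$, so that $vw$ is an edge iff $|x_v - x_w| \leq 1$. For the claw $K_{1,3}$ with center $u$ and leaves $v_1, v_2, v_3$, order the leaves by $x$-coordinate $x_{v_1} \leq x_{v_2} \leq x_{v_3}$; mutual non-adjacency of consecutive leaves forces $x_{v_3} - x_{v_1} > 2$, while $u$ being adjacent to both $v_1$ and $v_3$ forces $x_{v_3} - x_{v_1} \leq 2$, a contradiction. For an induced cycle $C_n$ with $n \geq 4$, one argues that interval graphs on the line are chordal: assuming $I_{v_1}, \ldots, I_{v_n}$ form an induced cycle, the vertex whose interval has the smallest right endpoint forces its two cycle-neighbors to overlap, producing a chord. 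The remaining forbidden patterns (net, tent, and their companions) are dispatched by similarly short case analyses exploiting the induced linear order of the $x_v$.

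The sufficient direction is the core of the argument. Assuming $\Gamma$ contains none of the forbidden subgraphs, I would first extract a linear ordering $v_1 < v_2 < \cdots < v_N$ of $V(\Gamma)$ with the \emph{consecutive-adjacency property}: whenever $i < j < k$ and $v_iv_k$ is an edge, both $v_iv_j$ and $v_jv_k$ are edges. Such an ordering can be produced by running a Lexicographic Breadth-First Search (LexBFS) on $\Gamma$. The absence of induced $C_n$ for $n \geq 4$ makes $\Gamma$ chordal, so LexBFS outputs a perfect elimination ordering; the claw-free condition together with the absence of the remaining forbidden subgraphs upgrades this to the stronger consecutive-adjacency property. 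This upgrading is the technical heart of the proof: one must verify that any local failure of consecutive adjacency in the LexBFS output forces one of the listed forbidden subgraphs to appear as an induced subgraph.

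Once the consecutive-adjacency ordering is in hand, the geometric realization is built by a straightforward induction. Place $v_1$ at position $x_1 = 0$; having placed $v_1, \ldots, v_i$ at strictly increasing positions $x_1 < \cdots < x_i$, set $x_{i+1} := x_i + \varepsilon_{i+1}$ where $\varepsilon_{i+1} > 0$ is chosen small enough that $v_{i+1}$ lies within distance $1$ of precisely its intended neighbors among $\{v_1, \ldots, v_i\}$. The consecutive-adjacency property guarantees that those neighbors form a suffix of that list, so a valid $\varepsilon_{i+1}$ always exists. The main obstacle in the entire argument is extracting the consecutive-adjacency ordering from the forbidden subgraph hypotheses; the representation construction is then essentially bookkeeping.
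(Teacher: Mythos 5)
The paper does not actually prove this theorem: it is stated as a classical result of Roberts and Wegner, cited from \cite{unitgraphs2,unitgraphs5,unitgraphs3}, and used as a black box feeding into Corollary~\ref{cor:limit1}. So there is no internal argument to compare yours against, and I evaluate the sketch on its own terms.

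Your necessity direction is fine: the distance argument correctly rules out the claw, the minimum-right-endpoint argument shows an interval representation forces chordality (and the resulting edge is a genuine chord, since the two cycle-neighbours of a vertex on $C_n$ with $n \geq 4$ are non-adjacent on the cycle), and the net and tent are indeed dispatched by short explicit checks on the induced linear order. The genuine gap is exactly where you flag ``the technical heart'' and then do not carry it out. You assert that a single LexBFS sweep on a graph avoiding the forbidden subgraphs yields an ordering with the consecutive-adjacency (indifference) property. A single LexBFS sweep on a chordal graph gives a perfect elimination ordering, which is a different and strictly weaker condition (it constrains later neighbours to form a clique, not consecutive triples), and a single sweep on a unit interval graph does not in general produce an indifference ordering --- this is precisely why the known recognition algorithms for this class run multiple LexBFS sweeps with tie-breaking (Corneil's three-sweep algorithm) or first locate a special end vertex from which to start. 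The step ``any local failure of consecutive adjacency in the LexBFS output forces a forbidden subgraph'' must therefore be formulated precisely and proved before the inductive geometric placement can begin; that lemma is absent, and it is not obviously true for an arbitrary LexBFS ordering. The classical proofs proceed differently, either by first invoking the Lekkerkerker--Boland interval-graph characterization and then showing claw-freeness lets one equalize interval lengths, or by a direct construction of the indifference ordering from the clique structure. As written, the sufficiency direction of your argument remains open.
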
 
	As a consequence, we get the following corollary.
	\begin{cor}\label{cor:limit1}The support of the measure $\varphi^{(1)}_*\Theta$ defined in Corollary \ref{corsupppihik} is given by all graphs that do not contain any cycle of length at least four and any of the graphs shown in Figure \ref{fig:proi} as induced subgraph.
	\end{cor}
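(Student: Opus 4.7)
The plan is to combine Corollary \ref{corsupppihik} (specialized to $k=1$) with the Roberts--Wegner forbidden-subgraph characterization of unit interval graphs (Theorem \ref{lekkerkerker}). Corollary \ref{corsupppihik} identifies
\[
\supp(\varphi^{(1)}_*\Theta) = \mathcal{G}^{(1)}(\mathbb{R}^m)/{\simeq},
\]
so the task reduces to describing which abstract graphs are realizable as connected $\mathbb{R}^m$--geometric graphs; in the dimension $m=1$ relevant here, this is precisely the class of connected unit interval graphs.

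First I would verify the identification $\mathcal{G}^{(1)}(\mathbb{R}) = \{\text{connected unit interval graphs}\}$: by definition an edge joins $p_i$ and $p_j$ in an $\mathbb{R}$--geometric graph with parameter $r$ iff $|p_i-p_j|\leq 2r$, equivalently iff the closed intervals $[p_k-r,p_k+r]$ of common length $2r$ pairwise intersect, which is exactly the unit interval graph relation (up to rescaling). Conversely any finite unit interval graph arises in this form by placing its interval centres on the line. Remark \ref{rem:open} ensures that working with closed versus open balls does not alter the class of geometric graphs, so there is no boundary subtlety to resolve.

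Once this identification is in place, the conclusion is immediate: Theorem \ref{lekkerkerker} characterizes unit interval graphs as exactly those graphs containing no induced cycle of length at least four and none of the forbidden subgraphs of Figure \ref{fig:proi}. Substituting this description into the support identity from Corollary \ref{corsupppihik} yields the statement.

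I do not expect a genuine obstacle here: the corollary is a bookkeeping combination of two results (one proved earlier in the paper, the other quoted from the literature), and the only point to double-check is the matching of conventions (closed versus open balls, and the notion of connectedness) between the geometric-complex framework and the classical intersection-graph definition of unit interval graphs.
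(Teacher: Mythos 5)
Your proof is correct and matches the paper's (implicit) argument: the paper states Corollary \ref{cor:limit1} with no written proof, declaring it "a consequence" of Theorem \ref{lekkerkerker} combined with the support identity $\supp\varphi^{(1)}_*\Theta=\mathcal{G}^{(1)}(\mathbb{R}^m)/{\simeq}$ from Corollary \ref{corsupppihik}, specialized to $m=1$. Your extra verifications (that $\mathbb{R}$--geometric graphs coincide with unit interval graphs up to rescaling, and that Remark \ref{rem:open} disposes of the open/closed ball convention) are exactly the bookkeeping points a careful reader would want, and they cause no difficulty.
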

		\begin{figure}
    \centering
    \subfloat{{
    	\begin{tikzpicture}
			\node[bluenode] (1) {};
			\node[bluenode] (2) [below left = 0.6 cm and 0.3 cm of 1]  {};
			\node[bluenode] (3) [below right = 0.6 cm and 0.3 cm of 1] {};
			\node[bluenode] (4) [below right = 0.6 cm and 0.6 cm of 3]  {};
			\node[bluenode] (5) [below left = 0.6 cm and 0.6 cm of 2] {};
			\node[bluenode] (6) [above = 0.6 cm of 1] {};
			
			\path[draw,thick]
			(1) edge node {} (2)
			(1) edge node {} (3)
			(2) edge node {} (3)
			(4) edge node {} (3)
			(2) edge node {} (5)
			(1) edge node {} (6);
			
			\end{tikzpicture}
    }}
    \qquad
    \subfloat{{
    	\begin{tikzpicture}
			\node[bluenode] (1) {};
			\node[bluenode] (2) [below left = 1 cm and 1 cm of 1]  {};
			\node[bluenode] (3) [below right = 1 cm and 1 cm of 1] {};
	 	\node[bluenode] (4) [above = of 1] {};
			
			\path[draw,thick]
			(1) edge node {} (2)
			(1) edge node {} (4)
			(1) edge node {} (3);
			
			\end{tikzpicture}
    }}\qquad
    \subfloat{{
    	\begin{tikzpicture}
			\node[bluenode] (1) {};
			\node[bluenode] (2) [below left = 1 cm and 0.5 cm of 1]  {};
			\node[bluenode] (3) [below right = 1 cm and 0.5 cm of 1] {};
	    	\node[bluenode] (4) [below left = 1 cm and 0.5 cm of 2] {};
	    	\node[bluenode] (5) [below right = 1 cm and 0.5 cm of 2] {};
	    	\node[bluenode] (6) [below right = 1 cm and 0.5 cm of 3] {};
			
			\path[draw,thick]
			(1) edge node {} (2)
			(3) edge node {} (2)
			(1) edge node {} (3)
			(3) edge node {} (6)
			(2) edge node {} (4)
			(2) edge node {} (5)
			(3) edge node {} (5)
			(4) edge node {} (5)
			(5) edge node {} (6);
			
			\end{tikzpicture}
    }}
    \caption{Together with the cycles of length at least four, these are the non-admissible induced subgraphs for unit interval graphs on the line.}
    \label{fig:proi}
\end{figure}
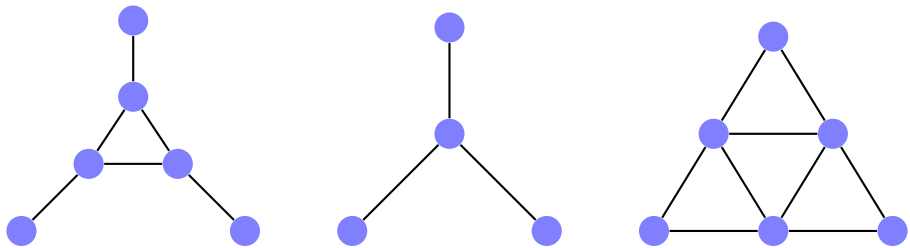
	\section{Normalized Laplacian of a graph and its spectrum}\label{section:Laplacian of a graph and its spectrum}

We now fix a graph $\Gamma$ on $n$ vertices $v_1,\ldots,v_n$ and we recall the definition of the (symmetric) normalized Laplace matrix, together with other common matrices associated to graphs. We shall then define the \emph{spectrum} and the \emph{spectral measure} associated to these matrices, and show some properties.
	\begin{definition}[Matrices associated to a graph]
		Let $A$ be the \emph{adjacency matrix} of $\Gamma$; let $D:=\diag(\deg v_1,\ldots,\deg v_n)$ be the \emph{degree matrix}; let $L:=D-A$ be the \emph{non-normalized Laplacian matrix} and let $\hat{L}:=I_n-D^{-1/2}AD^{-1/2}$ be the \emph{symmetric normalized Laplacian matrix}.
	\end{definition}
\begin{definition}[Spectrum of a matrix]
Given $Q\in \Sym(n,\mathbb{R})$ let $\spec(Q)$ be the spectrum of $Q$, i.e. the collection of its eigenvalues repeated with multiplicity,
\begin{equation*}
		\lambda_1(Q)\leq\ldots\leq \lambda_n(Q).
\end{equation*}
 We define the \emph{empirical spectral measure of} $Q$ as
\begin{equation*}
\mu_Q:=\frac{1}{n}\sum_{i=1}^n\delta_{\lambda_i(Q)}.
\end{equation*}
\end{definition}
\begin{definition}[Spectrum of a graph]\label{defspectrum}
		We define \emph{spectrum} of $\Gamma$, $\spec(\Gamma)$, as the spectrum of $\hat{L}$ and we write it as
		\begin{equation*}
		\lambda_1(\Gamma)\leq\ldots\leq \lambda_n(\Gamma).
		\end{equation*}
		We also define
		\begin{equation*}
		s(\Gamma):=\sum_{i=1}^{n}\delta_{\lambda_i(\Gamma)}
		\end{equation*}and the \emph{spectral measure of} $\Gamma$ as
		\begin{equation*}
		\mu_\Gamma:=\mu_{\hat{L}}=\frac{1}{n}\sum_{i=1}^n\delta_{\lambda_i(\Gamma)}.
		\end{equation*}
	\end{definition}
	Recall that, for every $i=1,\ldots,n$, $\lambda_i(\Gamma)\in[0,2]$ \cite[Equation (1.1) and Lemma 1.7]{Chung}. In particular, this implies that $s(\Gamma)\in\mathcal{M}([0,2])$ and $\mu_\Gamma\in\mathcal{M}^1([0,2])$.
	\begin{theorem}\label{teoo1edges}
	
			Let $(\Gamma_{1,n})_n$ and $(\Gamma_{2,n})_n$ be two sequences of graphs such that, for every $n$, $(\Gamma_{1,n})_n$ and $(\Gamma_{2,n})_n$ are two graphs on $n$ nodes that differ at most by $c$ edges. Denote by $\mu_{1,n}$ and $\mu_{2,n}$ the spectral measures associated to one of the matrices $A$, $D$, $L$, $\hat{L}$. Then
		\begin{equation*}
		\mu_{1,n}-\mu_{2,n}\starabove \rightharpoonup 0,
		\end{equation*} where $\starabove \rightharpoonup$ denotes the weak star convergence, i.e. for each $f\in C^{0}_c(\mathbb{R},\mathbb{R})$
			\begin{equation*}
			\bigg| \int_\mathbb{R}f\diff\mu_{1,n}-\int_\mathbb{R}f\diff\mu_{2,n}\bigg|\rightarrow 0.
			\end{equation*}
	\end{theorem}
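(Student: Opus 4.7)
The plan is a matrix perturbation argument: modifying $c$ edges perturbs each of the four matrices by a term of rank $O(c)$, and a rank-$r$ perturbation of a symmetric matrix moves the empirical spectral distribution function by at most $r/n$ in Kolmogorov distance. This $O(c/n)$ closeness then upgrades to weak-$*$ convergence of the difference to zero against any $f\in C_c^0$.

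First, I would verify the rank bound on $Q_{1,n}-Q_{2,n}$ one edge modification at a time. For the adjacency matrix $A$ the change produced by inserting or removing an edge $\{i,j\}$ is $\pm(e_i e_j^{\top}+e_j e_i^{\top})$, of rank $2$; for the degree matrix $D$ it is $\pm(e_i e_i^{\top}+e_j e_j^{\top})$, of rank $2$; for $L=D-A$ these combine into $\pm(e_i-e_j)(e_i-e_j)^{\top}$, of rank $1$. The delicate case is $\hat L$: the factor $D^{-1/2}$ couples the entries nonlinearly, and a single edge flip can perturb all entries in the rows and columns indexed by $i$ and $j$. The key observation is that $\hat L_{kl}$ is unchanged whenever both $k,l\notin\{i,j\}$, since its value depends only on $A_{kl}$, $d_k$, and $d_l$, none of which are affected. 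Hence $\Delta\hat L$ is supported on that cross of rows and columns, and any symmetric matrix with such a support pattern has rank at most $4$ (it can be written as $e_i u^{\top}+u e_i^{\top}+e_j v^{\top}+v e_j^{\top}$ for suitable $u,v\in\mathbb{R}^n$). Summing over the $c$ edge modifications yields $\operatorname{rank}(Q_{1,n}-Q_{2,n})\leq K c$ with $K\in\{1,2,4\}$ depending on the matrix type.

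Next, I would invoke the classical rank inequality (see e.g.\ Bai's monograph on random matrix theory): for $Q,Q'\in\Sym(n,\mathbb{R})$ with empirical spectral distribution functions $F_Q$ and $F_{Q'}$,
\begin{equation*}
\sup_{x\in\mathbb{R}}\bigl|F_Q(x)-F_{Q'}(x)\bigr|\leq \frac{\operatorname{rank}(Q-Q')}{n}.
\end{equation*}
Combined with the previous step this gives $\|F_{1,n}-F_{2,n}\|_\infty\leq Kc/n\to 0$. To upgrade this Kolmogorov bound to weak-$*$ convergence, for $f\in C_c^1(\mathbb{R},\mathbb{R})$ integration by parts yields
\begin{equation*}
\int f\diff\mu_{1,n}-\int f\diff\mu_{2,n}=-\int f'(x)\bigl(F_{1,n}(x)-F_{2,n}(x)\bigr)\diff x,
\end{equation*}
whose absolute value is bounded by $\|f'\|_{L^1}\cdot Kc/n\to 0$. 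For general $f\in C_c^0(\mathbb{R},\mathbb{R})$ a uniform approximation by $C_c^1$ functions transfers the estimate, since the $\mu_{i,n}$ are probability measures and the approximation error contributes at most $2\|f-\tilde f\|_\infty$ to the difference.

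The main obstacle is the rank estimate for $\hat L$: the nonlinear dependence on the degrees means the perturbation has no clean closed-form expression, so one must identify its support pattern rather than try to write it out explicitly, and then exploit the general fact that a symmetric matrix supported on a cross of $r$ rows and columns has rank at most $2r$.
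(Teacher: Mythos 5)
Your proof is correct, and it takes a genuinely different route from the paper's. The paper proceeds via the Wielandt--Hoffman inequality: it bounds the Schatten-$1$ norm $\|Q_{1,n}-Q_{2,n}\|_{S^1}$ (obtaining $O(c)$ for $A$, $O(c^2)$ for $D,L$, and $O(c\sqrt{n})$ for $\hat L$), deduces $\sum_i|\lambda_i^{(1)}-\lambda_i^{(2)}|\leq\|Q_{1,n}-Q_{2,n}\|_{S^1}$, and then splits the eigenvalues according to whether $|\lambda_i^{(1)}-\lambda_i^{(2)}|$ is small or large, using uniform continuity of $f$ for the small-gap terms and the Schatten bound to control the number of large-gap terms. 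You instead bound the rank of the perturbation directly ($\mathrm{rank}(Q_{1,n}-Q_{2,n})\leq Kc$ with $K\in\{1,2,4\}$ via the cross-support argument), invoke the rank inequality $\|F_{1,n}-F_{2,n}\|_\infty\leq\mathrm{rank}(Q_{1,n}-Q_{2,n})/n$ for empirical spectral distribution functions, and upgrade the Kolmogorov bound to weak-$*$ convergence by integration by parts plus $C^1_c$-approximation. Your approach has the advantage of being uniform across all four matrices: it yields the same $O(c/n)$ Kolmogorov rate for each, whereas the paper's Schatten bound for $\hat L$ degrades to $O(c/\sqrt n)$ and only still closes because the final estimate divides by $n$. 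In particular, the rank inequality completely sidesteps the nonlinear dependence of $\hat L$ on the degrees, which is the subtle point in the paper's Lemma bounding $\|\hat L_1-\hat L_2\|_{S^1}$. Your cross-support argument for the rank of a single-edge perturbation of $\hat L$ is sound, and the integration-by-parts/approximation step to pass from the Kolmogorov bound to the weak-$*$ statement is standard and handled correctly.
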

	We shall prove Theorem \ref{teoo1edges} in Section \ref{sectionweakconvergence}.
		\begin{rmk}
			In the case of $\hat{L}$, we have convergence in total variation distance for ``connected sum'' of complete graphs, but not for paths, as we shall see in Section \ref{sectionstrongconvergence}.
		\end{rmk}
		\begin{rmk} Theorem 2.8 from \cite{JJ} tells that, if two families $(\Gamma_{1,n})_n$ and  $(\Gamma_{2, n})_n$ differ by at most $c$ edges \emph{and} their corresponding spectral measures have weak limits, then they belong to the same spectral class (the notion of spectral class of a family of graph is recalled in Remark \ref{rmk:JJ}). In this sense our previous Theorem \ref{teoo1edges} can be considered as an analogue of \cite[Theorem 2.8]{JJ}: the difference of the spectral measure of two families of graphs $(\Gamma_{1,n})_n$ and  $(\Gamma_{2, n})_n$ differing by at most a finite number $c$ of edges, goes to zero weakly (without the assumption that the corresponding spectral measures have weak limits).
		\end{rmk}
		
		\subsection{Proof of Theorem \ref{teoo1edges}}\label{sectionweakconvergence}
		\subsubsection{Preliminaries}
		Given $Q\in \Sym(n,\mathbb{R})$, we define the \emph{$1$--Shatten norm of} $Q$ as
		\begin{equation*}
		\|Q\|_{S^1}:=\sum_{i=1}^n|\lambda_i(Q)|.
		\end{equation*}The \emph{Weilandt-Hoffman inequality} \cite[Exercise 1.3.6]{Tao} holds:
		\begin{equation}\label{Weilandt-Hoffman inequality}
		\sum_{i=1}^n|\lambda_i(Q_1)-\lambda_i(Q_2)|\leq \|Q_1-Q_2\|_{S^1}.
		\end{equation}
		\begin{prop}\label{prop1proof}
			Let $Q_1,Q_2\in \Sym(n,\mathbb{R})$ such that 
			\begin{equation*}
			\|Q_1-Q_2\|_{S^1}\leq C.
			\end{equation*}Then, for each $f\in C^0_c(\mathbb{R},\mathbb{R})$ and for each $\varepsilon>0$, there exists $\delta>0$ such that
			\begin{equation*}
			\biggl|\int_{\mathbb{R}}f\diff\mu_{Q_1}-\int_{\mathbb{R}}f\diff\mu_{Q_2}\biggr|\leq\varepsilon+\frac{2\sup |f|}{\delta n}\cdot C.
			\end{equation*}
		\end{prop}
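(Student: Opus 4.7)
The plan is to rewrite the difference of integrals as a sum over matched eigenvalues and then split the index set into a ``good'' part where the eigenvalues of $Q_1$ and $Q_2$ are close and a ``bad'' part where they are far apart. Concretely, because $\mu_{Q_i}=\tfrac{1}{n}\sum_{j=1}^n\delta_{\lambda_j(Q_i)}$ we have
\begin{equation*}
\int_\mathbb{R} f\diff\mu_{Q_1}-\int_\mathbb{R} f\diff\mu_{Q_2}=\frac{1}{n}\sum_{j=1}^{n}\bigl(f(\lambda_j(Q_1))-f(\lambda_j(Q_2))\bigr),
\end{equation*}
so the task is to bound this averaged sum.

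Since $f\in C^0_c(\mathbb{R},\mathbb{R})$, it is uniformly continuous. I would start by choosing $\delta>0$ (depending on $\varepsilon$ and $f$) so that $|x-y|<\delta$ implies $|f(x)-f(y)|<\varepsilon$. Then I would partition $\{1,\ldots,n\}$ as $J_{\mathrm{good}}\sqcup J_{\mathrm{bad}}$, where $J_{\mathrm{good}}$ collects the indices $j$ with $|\lambda_j(Q_1)-\lambda_j(Q_2)|<\delta$ and $J_{\mathrm{bad}}$ the remaining ones. On $J_{\mathrm{good}}$, each term contributes at most $\varepsilon$ by uniform continuity, so the total good contribution is at most $\varepsilon$. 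On $J_{\mathrm{bad}}$ I only use the crude bound $|f(\lambda_j(Q_1))-f(\lambda_j(Q_2))|\leq 2\sup|f|$.

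The remaining ingredient is to control $|J_{\mathrm{bad}}|$, and this is precisely where the Weilandt--Hoffman inequality \eqref{Weilandt-Hoffman inequality} comes in:
\begin{equation*}
\delta\cdot|J_{\mathrm{bad}}|\leq\sum_{j\in J_{\mathrm{bad}}}|\lambda_j(Q_1)-\lambda_j(Q_2)|\leq\sum_{j=1}^{n}|\lambda_j(Q_1)-\lambda_j(Q_2)|\leq\|Q_1-Q_2\|_{S^1}\leq C,
\end{equation*}
so $|J_{\mathrm{bad}}|\leq C/\delta$. Putting the two estimates together yields
\begin{equation*}
\biggl|\int_\mathbb{R} f\diff\mu_{Q_1}-\int_\mathbb{R} f\diff\mu_{Q_2}\biggr|\leq\frac{|J_{\mathrm{good}}|}{n}\varepsilon+\frac{2\sup|f|}{n}\cdot\frac{C}{\delta}\leq\varepsilon+\frac{2\sup|f|}{\delta n}\cdot C,
\end{equation*}
which is exactly the desired inequality. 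There is no real obstacle; the only subtle point is remembering that the Weilandt--Hoffman inequality matches eigenvalues \emph{in the same order}, which is precisely the order in which the spectra enter the definition of $\mu_{Q_i}$, so the pairing used in the two bounds is consistent.
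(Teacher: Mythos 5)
Your proof is correct and follows essentially the same argument as the paper: split the indices into a good set (where $|\lambda_j(Q_1)-\lambda_j(Q_2)|<\delta$, controlled by uniform continuity of $f$) and a bad set (whose size is bounded by $C/\delta$ via the Weilandt--Hoffman inequality and a Markov-type count), then use the crude bound $2\sup|f|$ on the bad set. The final remark about the eigenvalue ordering being consistent with the definition of $\mu_{Q_i}$ is a correct and worthwhile observation.
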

		\begin{proof}
			Denote by $\{\lambda_i^{(1)}\}_{i=1}^n$ and $\{\lambda_i^{(2)}\}_{i=1}^n$ the eigenvalues of $Q_1$ and $Q_2$ respectively. Then
			\begin{equation*}
			\int_{\mathbb{R}}f\diff\mu_{Q_1}-\int_{\mathbb{R}}f\diff\mu_{Q_2}=\frac{1}{n}\sum_{i=1}^{n}f(\lambda_i^{(1)})-f(\lambda_i^{(2)}),
			\end{equation*}therefore
			\begin{equation*}
			\biggl|\int_{\mathbb{R}}f\diff\mu_{Q_1}-\int_{\mathbb{R}}f\diff\mu_{Q_2}\biggr|\leq\frac{1}{n}\sum_{i=1}^{n}\bigl|f(\lambda_i^{(1)})-f(\lambda_i^{(2)})\bigr|.
			\end{equation*}Now, since $f\in C^0_c(\mathbb{R},\mathbb{R})$, $f$ is uniformly continuous and given $\varepsilon>0$ there exists $\delta=\delta(f)$ such that 
			\begin{equation*}
			|\lambda_1-\lambda_2|\leq\delta \qquad \Longrightarrow \qquad |f(\lambda_1)-f(\lambda_2)|\leq\varepsilon.
			\end{equation*}Therefore, since by Equation \ref{Weilandt-Hoffman inequality} and by hypothesis we have that
			\begin{equation*}
			\sum_{i=1}^{n}|\lambda_i^{(1)}-\lambda_i^{(2)}|\leq \|Q_1-Q_2\|_{S^1}\leq C,
			\end{equation*}it follows that 
			\begin{equation*}
			|\{|\lambda_i^{(1)}-\lambda_i^{(2)}|>\delta\}|\leq\frac{C}{\delta}.
			\end{equation*}Therefore,
			\begin{align*}
			\biggl|\int_{\mathbb{R}}f\diff\mu_{Q_1}-\int_{\mathbb{R}}f\diff\mu_{Q_2}\biggr|&\leq\frac{1}{n}\sum_{i=1}^{n}\bigl|f(\lambda_i^{(1)})-f(\lambda_i^{(2)})\bigr|\\
			&= \frac{1}{n}\sum_{|\lambda_i^{(1)}-\lambda_i^{(2)}|<\delta}\bigl|f(\lambda_i^{(1)})-f(\lambda_i^{(2)})\bigr|+\frac{1}{n}\sum_{|\lambda_i^{(1)}-\lambda_i^{(2)}|\geq\delta}\bigl|f(\lambda_i^{(1)})-f(\lambda_i^{(2)})\bigr|\\
			&\leq \frac{1}{n}\sum_{|\lambda_i^{(1)}-\lambda_i^{(2)}|<\delta}\varepsilon+\frac{1}{n}\sum_{|\lambda_i^{(1)}-\lambda_i^{(2)}|\geq\delta}\bigl|f(\lambda_i^{(1)})\bigr|+\bigl|f(\lambda_i^{(2)})\bigr|\\
			&\leq\frac{1}{n}\cdot\varepsilon\cdot\bigl|\{|\lambda_i^{(1)}-\lambda_i^{(2)}|<\delta\}\bigr|+\frac{1}{n}\cdot 2\sup |f|\cdot\bigl|\{|\lambda_i^{(1)}-\lambda_i^{(2)}|	\geq\delta\}\bigr|\\
			&\leq\frac{1}{n}\cdot\varepsilon\cdot n + \frac{1}{n}\cdot\frac{2\sup |f|}{\delta}\cdot C\\
			&\leq \varepsilon+\frac{2\sup |f|}{\delta n}\cdot C.
			\end{align*}
		\end{proof}
		\subsubsection{Applications to graphs}
		\begin{lem}\label{lemci}Let $\Gamma_1$, $\Gamma_2$ be two graphs with $V(\Gamma_1)=V(\Gamma_2)$ that differ by at most $C$--many edges. Then,
			\begin{align*}
			&\|A_1-A_2\|_{S^1}\leq 4C\\
			&\|D_1-D_2\|_{S^1}\leq 4C^2\\
			&\|L_1-L_2\|_{S^1}\leq 4C^2\\
			&\|\hat{L}_1-\hat{L}_2\|_{S^1}\leq 2C\cdot\sqrt{2}\cdot\sqrt{n-1}.
			\end{align*}
		\end{lem}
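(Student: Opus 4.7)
The overall strategy is to reduce to the case in which $\Gamma_1$ and $\Gamma_2$ differ by exactly one edge, via repeated use of the triangle inequality for the Schatten $1$-norm; once the four bounds are established for a single edge change, summing over the $C$ changes produces the statement.

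For $A$, $D$ and $L$ the single-edge estimates are immediate. Toggling an edge $\{u,v\}$ changes the adjacency matrix by $\pm(e_ue_v^T+e_ve_u^T)$, a symmetric rank-two matrix with singular values $\{1,1\}$, so $\|A_1-A_2\|_{S^1}\le 2$ per edge and hence $\|A_1-A_2\|_{S^1}\le 2C\le 4C$ overall. The difference $D_1-D_2$ is diagonal with at most two nonzero entries, each of absolute value one, giving $\|D_1-D_2\|_{S^1}\le 2C\le 4C^2$. For $L=D-A$ the triangle inequality yields $\|L_1-L_2\|_{S^1}\le\|D_1-D_2\|_{S^1}+\|A_1-A_2\|_{S^1}\le 4C\le 4C^2$.

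The interesting case is $\hat{L}$. Toggling $\{u,v\}$ alters, via the normalizing factors $1/\sqrt{d_u}$ and $1/\sqrt{d_v}$, every off-diagonal entry $(u,j)$ or $(v,j)$ whose column index $j$ is a neighbour of $u$ or $v$, so the difference matrix $\Delta:=\hat{L}_1-\hat{L}_2$ is not sparse. I would establish three facts for a single edge change. First, $\Delta$ vanishes outside rows and columns indexed by $\{u,v\}$, so $\operatorname{rank}(\Delta)\le 4$, in particular $\le n-1$. Second, $\|\Delta\|_F$ is bounded by a constant independent of $n$: the $(u,v)$ and $(v,u)$ entries contribute at most $2$, while the elementary mean-value estimate $\frac{1}{\sqrt{d}}-\frac{1}{\sqrt{d+1}}\le\frac{1}{2d^{3/2}}$ gives $|\Delta_{uj}|^2\le\frac{1}{4d_jd_u^3}$ for each neighbour $j\ne v$ of $u$; summing over the at most $d_u$ such $j$ (using $d_j\ge 1$) yields $\sum_j|\Delta_{uj}|^2\le\frac{1}{4d_u^2}\le\tfrac14$, and the analogous bounds hold for row $v$ and for the symmetric contributions from columns $u$ and $v$. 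Third, Cauchy--Schwarz applied to the singular values, $\|M\|_{S^1}\le\sqrt{\operatorname{rank}(M)}\,\|M\|_F\le\sqrt{n-1}\,\|M\|_F$, converts the uniform Frobenius bound into $\|\Delta\|_{S^1}\le 2\sqrt{2}\sqrt{n-1}$ per edge; summing over the $C$ edges yields the desired $2C\sqrt{2}\sqrt{n-1}$.

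The main obstacle is the Frobenius estimate for $\hat{L}$: even though the number of affected entries grows with $\max(d_u,d_v)$, each entry decays like $d_u^{-3/2}$ or $d_v^{-3/2}$, and this exact cancellation is what keeps $\|\Delta\|_F$ bounded by a universal constant, independent of both the vertex degrees and of $n$. Once this uniform Frobenius bound is in hand, the Schatten $1$-bound is a standard consequence of a rank/dimension argument.
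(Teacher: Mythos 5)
Your proof is correct, but it diverges from the paper's in an instructive way, chiefly for $\hat{L}$. For $A$, $D$, $L$ the paper bounds the full difference matrix in one shot: $\Delta_i$ has rank at most $4C$ and at most $4C$ nonzero entries bounded by a constant, so the Cauchy--Schwarz estimate $\|M\|_{S^1}\le\sqrt{\operatorname{rank}M}\,\|M\|_F$ gives the result. Your single-edge decomposition plus triangle inequality is more elementary and in fact yields sharper constants ($2C$, $2C$, $4C$), comfortably within the stated bounds. The genuine divergence is for $\hat{L}$: the paper simply observes that $\hat{L}_1-\hat{L}_2$ has at most $2C(n-1)$ nonzero entries, each of absolute value at most $1$, so $\|\hat{L}_1-\hat{L}_2\|_F\le\sqrt{2C(n-1)}$, and multiplies by $\sqrt{\operatorname{rank}}\le 2\sqrt{C}$. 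You instead prove, via the mean-value estimate $\frac{1}{\sqrt{d}}-\frac{1}{\sqrt{d+1}}\le\frac{1}{2d^{3/2}}$, that the Frobenius norm of a single-edge difference is bounded by a universal constant (your computation gives $\le\sqrt{3}$), independent of both $n$ and the degrees. This is a strictly sharper input, and the cancellation you identify is the real content.

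Having done that work, though, you then multiply by the trivial $\sqrt{\operatorname{rank}}\le\sqrt{n-1}$ instead of the bound $\sqrt{\operatorname{rank}}\le 2$ that you yourself just established (the per-edge difference is supported on rows and columns indexed by $\{u,v\}$, hence has rank at most $4$). Using your own rank bound gives $\|\hat{L}_1-\hat{L}_2\|_{S^1}\le 2\sqrt{3}\,C$, an $n$-independent estimate that is strictly stronger than the paper's $2\sqrt{2}\,C\sqrt{n-1}$. So your finer Frobenius analysis not only works; combined consistently with your rank observation, it quietly improves the lemma. It would be better to state the stronger conclusion than to throw away the improvement in order to match the constant in the statement. (As a minor caveat shared with the paper: the estimate for $\hat{L}$ tacitly assumes no vertex becomes isolated, since $D^{-1/2}$ is otherwise undefined; this can be handled by convention and contributes only $O(1)$ per edge to $\|\Delta\|_F^2$.)
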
 
		\begin{proof}Observe that any of the matrices
			\begin{equation*}
			\Delta_1:=A_1-A_2,\qquad\Delta_2:=D_1-D_2,\qquad\Delta_3:=L_1-L_2
			\end{equation*}consists of all zeros except for at most $4C$ entries, all of which entries are bounded by a constant (it is $1$ for $\Delta_1$ and $C$ for $\Delta_2$ and $\Delta_3$). Therefore, each $\Delta_i\in\Sym(n,\mathbb{R})$ for $i=1,2,3$ has rank at most $4C$ and all its eigenvalues are zero, except for at most $4C$ of them. 			It follows that, for $i=1,2,3$,
			\begin{align*}
			\|\Delta_i\|_{S^1}&=\sum_{j=n-2C+1}^n|\lambda_j(\Delta_i)|\\
			&=\langle(|\lambda_{n-2C+1}|,\ldots,|\lambda_n|),(1,\ldots,1) \rangle\\
			&\leq \sum_{j=n-2C+1}^n\bigl(\lambda_j(\Delta_i)^2\bigr)^{1/2}\sqrt{4C}\\
			&=2C^{1/2}\|\Delta_i\|_F\\
			&=2C^{1/2}\bigl(\sum_{j,k}(\Delta_i)_{jk}^2\bigr)^{1/2}\\
			&=2C^{1/2}(4C\tilde{C}^2)^{1/2}\\
			&\leq 4C\tilde{C}_i,
			\end{align*}where
			\begin{equation*}
			    \tilde{C}_i=\begin{cases}1 & \textrm{if }i=1\\ C & \textrm{if }i=2,3.\end{cases}
			\end{equation*}

			Similarly, $\Delta_4:=\hat{L}_1-\hat{L}_2$ consists of all zeros except for at most $2C(n-1)$ entries, all of which entries are bounded by $1$, and it has rank at most $4C$. Therefore, 
				\begin{align*}
			\|\Delta_4\|_{S^1}&=\sum_{j=1}^n|\lambda_j(\Delta_4)|\\
			&\leq \sum_{j=1}^n\bigl(\lambda_j(\Delta_4)^2\bigr)^{1/2}\cdot(4C)^{1/2}\\
			&=\bigl(\sum_{j,k}(\Delta_4)_{jk}^2\bigr)^{1/2}\cdot2C^{1/2}\\
			&\leq \bigl(\sum_{1}^{2C(n-1)}1\bigr)^{1/2}\cdot2C^{1/2}\\
			&=2C\cdot\sqrt{2}\cdot\sqrt{n-1}.
			\end{align*}
		\end{proof}As a corollary, we can prove Theorem \ref{teoo1edges}.
		\begin{proof}[Proof of Theorem \ref{teoo1edges}]We prove that, for each $f\in C^{0}_c(\mathbb{R},\mathbb{R})$ and for each $\varepsilon>0$,
			\begin{equation*}
			\lim_{n\rightarrow\infty}\bigg| \int_\mathbb{R}f\diff\mu_{1,n}-\int_\mathbb{R}f\diff\mu_{2,n}\bigg|\leq \varepsilon.
			\end{equation*}Let $c_1:=4C$, $c_2:=c_3:=4C^2$ and $c_4:=2C\cdot\sqrt{2}$. By Lemma \ref{lemci}, we have that 
			\begin{equation*}
			\|\Delta_i\|_{S^1}\leq c_i
			\end{equation*}for each $i=1,2,3$. By Proposition \ref{prop1proof}, there exists $\delta>0$ such that
			\begin{equation*}
			\biggl|\int_{\mathbb{R}}f\diff\mu_{1,n}-\int_{\mathbb{R}}f\diff\mu_{2,n}\biggr|\leq\varepsilon+\frac{2\sup |f|}{\delta n}\cdot c_i.
			\end{equation*}Therefore,
			\begin{equation*}
			\lim_{n\rightarrow\infty}\bigg| \int_\mathbb{R}f\diff\mu_{1,n}-\int_\mathbb{R}f\diff\mu_{2,n}\bigg|\leq \varepsilon.
			\end{equation*}Similarly, by Lemma \ref{lemci} we have that 
			\begin{equation*}
			\|\Delta_4\|_{S^1}\leq c_4\sqrt{n-1}.
			\end{equation*}By Proposition \ref{prop1proof}, there exists $\delta>0$ such that
			\begin{equation*}
			\biggl|\int_{\mathbb{R}}f\diff\mu_{1,n}-\int_{\mathbb{R}}f\diff\mu_{2,n}\biggr|\leq\varepsilon+\frac{2\sup |f|}{\delta n}\cdot c_4\sqrt{n-1}.
			\end{equation*}Therefore,
			\begin{equation*}
			\lim_{n\rightarrow\infty}\bigg| \int_\mathbb{R}f\diff\mu_{1,n}-\int_\mathbb{R}f\diff\mu_{2,n}\bigg|\leq \varepsilon.
			\end{equation*}
		\end{proof}
		\subsection{Strong convergence for complete graphs}\label{sectionstrongconvergence}
			\begin{lem}\label{lemmaKN}
				Given $N\in\mathbb{N}^+$, let $K_N$ and $K'_N$ be two complete graphs on $N$ nodes. Let $K_N\sqcup K'_N$ be their disjoint union and let $\Gamma_N:=K_N\bigcup_{c\text{ edges }} K'_N$ be their union together with $c$ edges $(v_i,v'_i)$ where $v_i\in K_N$ and $v'_i\in K'_N$, for $i=1,\ldots,c$. Let also $\mu_{K_N\sqcup K'_N}$ and $\mu_{\Gamma_N}$ be the spectral measures of these two graphs. Then,
				\begin{equation*}
				\mu_{K_N\sqcup K'_N}=\frac{1}{N}\cdot\delta_0+\biggl(\frac{N-1}{N}\biggr)\cdot\delta_{\frac{N}{N-1}} 
				\end{equation*}and
				\begin{equation*}
				\mu_{\Gamma_N}=\frac{1}{2N}\cdot\delta_0+\frac{1}{2N}\cdot\sum_{i=1}^{2c+1}\delta_{a_i}+\biggl(\frac{N-1-c}{N}\biggr)\cdot\delta_{\frac{N}{N-1}}
				\end{equation*}for some $a_i\in (0,2)$.
			\end{lem}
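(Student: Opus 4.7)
For the first identity, I would begin with the single complete graph $K_N$. A direct calculation shows that
\begin{equation*}
\hat{L}_{K_N} = I - \frac{1}{N-1}(J-I) = \frac{N}{N-1}\left(I - \frac{1}{N}J\right),
\end{equation*}
where $J$ is the all-ones matrix. Since $\frac{1}{N}J$ is the orthogonal projector onto the span of the constant vector, $\hat{L}_{K_N}$ has eigenvalue $0$ with multiplicity $1$ and eigenvalue $\frac{N}{N-1}$ with multiplicity $N-1$. For $K_N \sqcup K_N'$, the normalized Laplacian is block diagonal, so its spectrum is the multiset union, giving $0$ with multiplicity $2$ and $\frac{N}{N-1}$ with multiplicity $2(N-1)$; dividing by $2N$ yields the claimed formula for $\mu_{K_N \sqcup K_N'}$.

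For $\Gamma_N$, my plan is to exhibit explicitly a large eigenspace for $\frac{N}{N-1}$ and then use dimension counting for the rest. Label the bridge endpoints $v_1,\ldots,v_c$ and $v_1',\ldots,v_c'$ (degree $N$), and the remaining vertices (degree $N-1$). Consider the subspace
\begin{equation*}
W := \Bigl\{\,f \in \mathbb{R}^{V(\Gamma_N)}\;:\;f(v_i)=f(v_i')=0 \text{ for } i\leq c,\;\sum_{j>c}f(v_j)=\sum_{j>c}f(v_j')=0\,\Bigr\},
\end{equation*}
which has dimension $2(N-c-1)$. The key computation is to verify that $\hat{L}f = \frac{N}{N-1}f$ for every $f\in W$: for a non-bridge vertex $v_i$ the sum defining $(\hat{L}f)(v_i)$ collapses because all bridge-vertex values vanish and $\sum_{j>c}f(v_j)=0$, while for a bridge vertex $v_i$ (where $f(v_i)=0$) the three contributing sums vanish for the same reasons. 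This is a direct but slightly tedious computation, and I expect it to be the main technical step of the proof.

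Once $W$ is identified as a $2(N-c-1)$-dimensional eigenspace for $\frac{N}{N-1}$, the orthogonal complement $W^\perp$ has dimension $2c+2$ and is $\hat{L}$-invariant by symmetry of $\hat{L}$. Moreover, $D^{1/2}\mathbf{1}\perp W$ (the inner product on the unprimed side reduces to $\sqrt{N-1}\sum_{j>c}f(v_j)=0$, and similarly on the primed side), so the $1$-dimensional kernel of $\hat{L}$ (which is $1$-dimensional exactly because $\Gamma_N$ is connected for $c\geq 1$) lies in $W^\perp$. This leaves precisely $2c+1$ additional eigenvalues $a_1,\ldots,a_{2c+1}$ from $W^\perp$.

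Finally, to conclude $a_i\in(0,2)$: the value $0$ is ruled out since the kernel has already been accounted for (connectedness), and $2$ is ruled out because $\Gamma_N$ contains triangles, hence is not bipartite, and a well-known characterization gives $2\in\spec(\hat L)$ iff the graph is bipartite. Assembling $1$ kernel eigenvalue, $2c+1$ interior eigenvalues, and $2(N-c-1)$ copies of $\frac{N}{N-1}$ and dividing by $2N$ gives the claimed expression for $\mu_{\Gamma_N}$.
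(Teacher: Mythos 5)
Your proof is correct and, at its core, it is the same argument as the paper's: exhibit a $2(N-1-c)$--dimensional eigenspace for $\tfrac{N}{N-1}$ consisting of functions that vanish on all bridge endpoints and sum to zero on each remaining clique, then account for the kernel and count what remains. The paper phrases this through the isospectral random-walk Laplacian $\tilde L = I - D^{-1}A$ and an explicit sub-clique extension construction, while you work directly with $\hat L$ and a cleanly defined subspace $W$; your write-up is a bit tighter and also supplies a justification (connectedness and non-bipartiteness for $N\geq 3$) that the residual eigenvalues $a_i$ lie strictly in $(0,2)$, a point the paper states without argument.
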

			\begin{rmk}In order to prove Lemma \ref{lemmaKN}, we make the following observation. It is easy to see that the spectrum of the symmetric normalized Laplacian matrix $\hat{L}=I_n-D^{-1/2}AD^{-1/2}$ equals the spectrum of the \emph{random walk normalized Laplacian matrix} $\tilde{L}:=I_n-D^{-1}A$. Moreover, for a graph with vertex set $V$, $\tilde{L}$ can be seen as an operator from the set $\{f:V\rightarrow\R\}$ to itself. We shall work on this operator for proving Lemma \ref{lemmaKN} and, for a graph $\Gamma$, we shall use the simplified notation $L^{\Gamma}$ in order to indicate the random walk normalized Laplace operator for $\Gamma$.
			\end{rmk}
			\begin{proof}[Proof of Lemma \ref{lemmaKN}]
				Since the spectrum of $K_N\sqcup K'_N$ is given by $0$ with multiplicity $2$ and $\frac{N}{N-1}$ with multiplicity $2(N-1)$, we have that
				\begin{equation*}
				\mu_{K_N\sqcup K'_N}=\frac{1}{2N}\Biggl(2\cdot\delta_0+2(N-1)\cdot\delta_{\frac{N}{N-1}}\Biggr).
				\end{equation*}In order to prove the second part of the lemma, we shall find $2(N-1-c)$ functions on $V(\Gamma_N)$ that are eigenfunctions for the normalized Laplace operator with eigenvalue $\frac{N}{N-1}$ and are orthogonal to each other. In particular, by the symmetry of $\Gamma_N$, it suffices to find $N-1-c$ such functions that are $0$ on the vertices of $K'_N$.
				
				Observe that $K_{N-1-c}$ is a subgraph of $K_N\setminus\{v_1,\ldots,v_c\}$ that has $N-1-c$ eigenfunctions $f_1,\ldots,f_{N-1-c}$ for the largest eigenvalue. These are orthogonal to each other and orthogonal to the constants, therefore
				\begin{equation*}
				0=\sum_{v\in V(K_{N-1-c})}\deg_{K_{N-1-c}}(v)f_i(v)f_j(v)=\sum_{v\in V(K_{N-1-c})}f_i(v)f_j(v)
				\end{equation*}and
				\begin{equation*}
				\sum_{v\in V(K_{N-1-c})}\deg_{K_{N-1-c}}(v)f_i(v)=\sum_{v\in V(K_{N-1-c})}f_i(v)=0
				\end{equation*}
				for each $i,j\in\{1,\ldots,N-1-c\}$. Now, for $i\in\{1,\ldots,N-1-c\}$, let $\tilde{f}_i$ be the function on $V(K_N)$ that is equal to zero on $v_1,\ldots,v_c$ and is equal to $f_i$ otherwise. Then, $\tilde{f}_1,\ldots,\tilde{f}_{N-1-c}$ are orthogonal to each other and orthogonal to the constants because
				\begin{align*}
				\sum_{v\in V(K_N)}\deg_{K_N}(v)\tilde{f}_i(v)\tilde{f}_j(v)&=\sum_{v\in V(K_{N-1-c})}(N-1)\tilde{f}_i(v)\tilde{f}_j(v)\\
				&=\sum_{v\in V(K_{N-1-c})}\tilde{f}_i(v)\tilde{f}_j(v)\\
				&=\sum_{v\in V(K_{N-1-c})}f_i(v)f_j(v)\\
				&=0
				\end{align*}and
				\begin{equation*}
				\sum_{v\in V(K_N)}\tilde{f}_i(v)=\sum_{v\in V(K_{N-1-c})}f_i(v)=0.
				\end{equation*}Since for complete graphs any function that is orthogonal to the constants is an eigenfunction for $\frac{N}{N-1}$, we have that $\tilde{f}_1,\ldots,\tilde{f}_{N-1-c}$ are (pairwise orthogonal) eigenfunctions for $\frac{N}{N-1}$. 
				
				Analogously, for $i\in\{1,\ldots,N-1-c\}$, let now $\hat{f}_i$ be the function on $V(\Gamma_N)$ that is equal to zero on $K_N'\cup \{v_1,\ldots,v_c\}$ and is equal to $f_i$ otherwise. It's then easy to see that also these functions are orthogonal to each other and orthogonal to the constants. Now, for each $i$ and for each $v\in \Gamma_N$ with $\tilde{f}_i(v)=0$, we have that $v\in K_{N-1-c}$, therefore
				\begin{align*}
				L^{\Gamma_N}\hat{f}_i(v)&=\hat{f}_i(v)-\frac{1}{\deg_{\Gamma_N}(v)}\sum_{w\sim v}\hat{f}_i(w)\\
				&=\tilde{f}_i(v)-\frac{1}{\deg_{K_N}(v)}\sum_{w\sim v\text{ in }K_N}\tilde{f}_i(w)\\
				&=L^{K_N}\tilde{f}_i(v)\\
				&=\frac{N}{N-1}\cdot\tilde{f}_i(v)\\
				&=\frac{N}{N-1}\cdot\hat{f}_i(v).
				\end{align*}
				This proves that the functions $\hat{f}_i$'s are $N-1-c$ orthogonal eigenfunctions of the Laplace Operator in $\Gamma_N$ for the eigenvalue $\frac{N}{N-1}$. Since they are all $0$ on $K'_N$, by symmetry we can also get $N-1-c$ eigenfunctions for $\frac{N}{N-1}$ on $\Gamma_N$ that are $0$ on $K_N$ and therefore are orthogonal to the first $N-1-c$ functions. This implies that the multiplicity of $\frac{N}{N-1}$ for $\Gamma_N$ is at least $2(N-1-c)$. Therefore, 
				\begin{equation*}
				\mu_{\Gamma_N}=\frac{1}{2N}\cdot\delta_0+\frac{1}{2N}\cdot\sum_{i=1}^{2c+1}\delta_{a_i}+\biggl(\frac{N-1-c}{N}\biggr)\cdot\delta_{\frac{N}{N-1}}
				\end{equation*}for some $a_i\in (0,2)$.
			\end{proof}
			\begin{cor}
				The total variation distance between the probability measures $\mu_{K_N\sqcup K'_N}$ and $\mu_{\Gamma_N}$ defined in the previous lemma is
				\begin{equation*}
				\sup_{A\subseteq[0,2] \text{ measurable }}\biggl|\mu_{K_N\sqcup K'_N}(A)-\mu_{\Gamma_N}(A)\biggr|=\frac{2c+1}{2N}.
				\end{equation*}In particular, if $c=o(N)$, the total variation distance tends to zero for $N\rightarrow\infty$.
			\end{cor}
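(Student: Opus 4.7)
The plan is to reduce the problem to a direct atom-by-atom comparison of the two explicit atomic decompositions provided by Lemma \ref{lemmaKN}. Both measures are purely atomic with finite support, and for two discrete probability measures on a common space the total variation distance equals $\tfrac{1}{2}\sum_x |\mu_1(\{x\}) - \mu_2(\{x\})|$, with the supremum $\sup_A |\mu_1(A) - \mu_2(A)|$ attained on the set of atoms where $\mu_1 \geq \mu_2$ (or, equivalently, on its complement).

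First I would single out the natural candidate for the optimal set, namely $A := \{0, \tfrac{N}{N-1}\}$, which is the support of $\mu_{K_N \sqcup K'_N}$ and, provided the values $a_1, \ldots, a_{2c+1}$ from Lemma \ref{lemmaKN} all lie in $(0,2) \setminus \{\tfrac{N}{N-1}\}$ (which is the natural reading of the lemma, since the point masses at the $a_i$'s are listed separately from the one at $\tfrac{N}{N-1}$), is exactly the locus where $\mu_{K_N \sqcup K'_N}$ has strictly larger atomic weight than $\mu_{\Gamma_N}$. Substituting the explicit formulas then gives $\mu_{K_N \sqcup K'_N}(A) = 1$ and
\begin{equation*}
\mu_{\Gamma_N}(A) = \frac{1}{2N} + \frac{N-1-c}{N} = \frac{2N-2c-1}{2N},
\end{equation*}
so that $|\mu_{K_N \sqcup K'_N}(A) - \mu_{\Gamma_N}(A)| = \tfrac{2c+1}{2N}$.

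To conclude, I would verify that $A$ realizes the supremum. For any measurable $A'$ the signed difference $\mu_{K_N \sqcup K'_N}(A') - \mu_{\Gamma_N}(A')$ decomposes as a sum over the atoms in $A'$ of the signed weight differences computed above: namely $+\tfrac{1}{2N}$ at $0$, $+\tfrac{2c}{2N}$ at $\tfrac{N}{N-1}$, and $-\tfrac{1}{2N}$ at each $a_i$. The absolute value is therefore maximized by collecting only the positive contributions (giving $\tfrac{2c+1}{2N}$) or, by the probability constraint, equivalently only the negative ones (the $a_i$'s, contributing $-\tfrac{2c+1}{2N}$ in total). The final asymptotic statement then follows at once: if $c = o(N)$ then $\tfrac{2c+1}{2N} \to 0$. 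The only delicate point is the implicit nondegeneracy assumption that $a_i \neq \tfrac{N}{N-1}$; were some $a_i$ to coincide with $\tfrac{N}{N-1}$, the multiplicity of that eigenvalue in $\mu_{\Gamma_N}$ would be larger and the equality would become a strict inequality, so this genericity condition is built into the phrasing of Lemma \ref{lemmaKN}.
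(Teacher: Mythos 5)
Your proof is correct and is the natural one-line computation the paper leaves implicit (the corollary is stated without an explicit proof): write out the atomic weights from Lemma \ref{lemmaKN}, observe that the set $A=\{0,\tfrac{N}{N-1}\}$ collects all positive signed differences, and conclude $\mu_{K_N\sqcup K'_N}(A)-\mu_{\Gamma_N}(A)=\tfrac{2c+1}{2N}$. Your observation about the implicit nondegeneracy requirement $a_i\neq\tfrac{N}{N-1}$ is sharp and worth retaining: the proof of Lemma \ref{lemmaKN} only shows that $\tfrac{N}{N-1}$ has multiplicity \emph{at least} $2(N-1-c)$ in $\spec(\Gamma_N)$, so without assuming the $a_i$'s avoid $\tfrac{N}{N-1}$ the corollary's equality weakens to an upper bound $\leq\tfrac{2c+1}{2N}$ (which in any case still gives the $c=o(N)$ conclusion).
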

			\begin{ex}\label{counterexamplestrong}
				The previous corollary doesn't hold in general. As a counterexample, take two copies of the path on $N$ vertices, $P_N$ and $P'_N$. Their union via one external edge can be for example the path on $2N$ vertices, and
				\begin{equation*}
				\mu_{P_{2N}}=\frac{1}{2N}\Biggl(\sum_{k=0}^{2N-1}\delta_{1-\cos\frac{\pi k}{2N-1}}\Biggr),
				\end{equation*}while
				\begin{equation*}
				\mu_{P_N\sqcup P'_N}=\frac{1}{N}\Biggl(\sum_{k=0}^{N-1}\delta_{1-\cos\frac{\pi k}{N-1}}\Biggr).
				\end{equation*}The total variation distance between these two measures does not tend to zero for $N\rightarrow\infty$.
			\end{ex}
	\section{Random geometric graphs}\label{section:Random geometric graphs}	
	Specializing Corollary \ref{corsupppihik} to the case $k=1$, we get 
	\begin{equation*}
	\varphi^{(1)}_*\Theta_n\xrightarrow{\mathbb{P}}\varphi^{(1)}_*\Theta\qquad\text{and}\qquad\varphi^{(1)}_*\Theta_R\xrightarrow{\mathbb{P}}\varphi^{(1)}_*\Theta,
	\end{equation*}with $\supp\varphi^{(1)}_*\Theta=\mathcal{G}^{(1)}(\mathbb{R}^m)$.
	\begin{definition}
		We define the random geometric graphs
		\begin{equation*}
		\Gamma_n:=\check{C}^{(1)}(\mathcal{U}_n)\qquad\text{and}\qquad\Gamma_R:=\check{C}^{(1)}(\mathcal{P}_R)
		\end{equation*}and we associate the empirical spectral measures 
		\begin{equation*}
		\mu_n:=\mu_{\Gamma_n}=\frac{1}{n}\sum_{i=1}^{n}\delta_{\lambda_i(\Gamma_n)}\qquad\text{ and }\qquad \mu_R:=\mu_{\Gamma_R}=\frac{1}{\#(V(\Gamma_R))}\sum_{i=1}^{\#(V(\Gamma_R))}\delta_{\lambda_i(\Gamma_R)}.
		\end{equation*}
	\end{definition}
 	Since the spectrum of a graph $\Gamma$ is finite, we can rewrite the two random measures $\mu_n$ and $\mu_R$ above as follows. First, the set of all possible isomorphism classes of graphs is countable and therefore there exists a sequence $\{x_\ell\}_{\ell\in \mathbb{N}}\subset [0,2]$ such that:
	\be\mu_n=\sum_{\ell=1}^\infty c_{\ell, n}\delta_{x_\ell}\quad \textrm{and}\quad \mu_R=\sum_{\ell=1}^\infty c_{\ell, R}\delta_{x_\ell},
	\ee
	where the random variables $c_{\ell, n}$ and $c_{\ell, R}$ are defined by:
	\be 
	c_{\ell, n}=\frac{1}{n}\#\{\textrm{components $\Gamma_{n,j}$ of $\Gamma_n=\sqcup\Gamma_{n, j}$ such that $x_\ell$ belongs to the spectrum of $\Gamma_{n,j}$}\}
	\ee
	and
	\be 
	c_{\ell, R}=\frac{1}{\#(V(\Gamma_R))}\#\{\textrm{components $\Gamma_{R,j}$ of $\Gamma_R=\sqcup\Gamma_{R, j}$ such that $x_\ell$ belongs to the spectrum of $\Gamma_{R,j}$}\}.
	\ee
	In the above definitions of the coefficients $c_{\ell, n}$ and $c_{\ell, R}$, the components should be counted ``with multiplicity": if $x_\ell$ belongs to the spectrum of $\Gamma_{n, j}$ (respectively $\Gamma_{R, j}$) with some multiplicity $m(x_\ell)$, then the component $\Gamma_{n, j}$ (respectively $\Gamma_{R, j}$) is counted $m(x_\ell)$ times.
	
	We will need the following Lemma.
	We will need the following Lemma.
	\begin{lem}\label{lemg1}For every $\delta>0$ there exists $L>0$ such that 
	\be \mathbb{E}\sum_{\ell\geq L}c_{\ell,n}<\frac{\delta}{4}\quad \textrm{and}\quad \mathbb{E}\sum_{\ell\geq L}c_{\ell,R}<\frac{\delta}{4}.\ee
	\end{lem}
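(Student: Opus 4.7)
My plan is to bound the tail $\sum_{\ell\geq L}c_{\ell,n}$ by the fraction of vertices lying in connected components of $\Gamma_n$ whose size exceeds a threshold $s_0$, and then to bound this fraction in expectation uniformly in $n$. For each $s_0\in\mathbb{N}$, the set of isomorphism classes of connected graphs on at most $s_0$ vertices is finite, so the set $E(s_0)\subset[0,2]$ of normalized-Laplacian eigenvalues that can appear in the spectrum of any such graph is finite. I would therefore choose the enumeration $\{x_\ell\}_{\ell\in\mathbb{N}}$ so that $E(s_0)\subseteq\{x_1,\ldots,x_{L(s_0)}\}$ for some non-decreasing function $L\colon\mathbb{N}\to\mathbb{N}$. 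With this enumeration, for any $\ell>L(s_0)$ the value $x_\ell$ cannot be an eigenvalue of any component with at most $s_0$ vertices; since each component of size $s$ contributes at most $s$ atoms (counted with multiplicity) to $n\cdot\mu_n$, we get
\[
\sum_{\ell>L(s_0)}c_{\ell,n}\;\leq\;\frac{1}{n}\sum_{j\,:\,|V(\Gamma_{n,j})|>s_0}|V(\Gamma_{n,j})|\;=\;\frac{\#\{i\,:\,|C_n(p_i)|>s_0\}}{n},
\]
where $C_n(p_i)$ is the connected component of $\Gamma_n$ containing $p_i$; the analogous inequality holds for $c_{\ell,R}$.

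Taking expectation and using exchangeability of $p_1,\ldots,p_n$, the right-hand side equals $\mathbb{P}(|C_n(p_1)|>s_0)$, so the problem reduces to showing that this tail can be made $<\delta/4$ by taking $s_0$ large, uniformly in $n$. To establish this I would pass to the Poisson rescaling limit: by Theorem \ref{teo3.1} (used as in the proof of Corollary \ref{cor3.2}), for a fixed radius $R>0$ the component of $p_1$ inside $\hat{B}(p_1,Rn^{-1/m})$ agrees, with probability $\geq 1-\delta/8$ for $n$ large enough, with the component of the origin in $\mathcal{P}\cap B(0,R)$. Choosing $R$ larger than any bound on the diameter of a Poisson component of size $\leq s_0$, the event $\{|C_n(p_1)|>s_0\}$ is then contained, up to small probability, in the event that the Poisson component at the origin has size $>s_0$. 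For the Poisson case $c_{\ell,R}$ the reduction is even more direct, since components of $\mathcal{P}_R$ are by construction contained in $B(0,R)$.

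The main obstacle is then a uniform tail estimate on the size of the component at the origin in the Poisson model. The cleanest route uses the $L^1$-convergence $\mathcal{N}(\mathcal{U}_n;w)/n\to c_w$ from Theorem \ref{teo4.1}, grouped by the vertex count $s(w)$ of each isotopy class $w$: since $\sum_{w}s(w)\cdot(\mathcal{N}(\mathcal{U}_n;w)/n)=1$ deterministically, Fatou's lemma yields $\sum_{w}s(w)c_w\leq 1$, and hence $\sum_{w\,:\,s(w)>s_0}s(w)c_w\to 0$ as $s_0\to\infty$. Feeding this back through the comparison of the previous paragraph gives the desired bound on $\mathbb{E}\sum_{\ell\geq L}c_{\ell,n}$ and $\mathbb{E}\sum_{\ell\geq L}c_{\ell,R}$ for all $n$ (resp.\ $R$) sufficiently large; the finitely many smaller values of $n$ and $R$ are absorbed by enlarging $L$ if necessary, since each $\mu_n$ and $\mu_R$ is supported on a finite set of eigenvalues.
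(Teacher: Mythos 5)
Your proof takes a genuinely different and far more elaborate route than the paper's. The paper's argument is essentially a one-line observation: since $\mu_n$ (resp.\ $\mu_R$) is a probability measure, $\sum_\ell \mathbb{E}\,c_{\ell,n}=\mathbb{E}\sum_\ell c_{\ell,n}\leq 1$, so the series converges and its tail is eventually small; as written, this only produces an $L$ that may depend on the particular $n$ (resp.\ $R$). You have correctly sensed that for the lemma to do its job in the proof of Theorem~\ref{teomu} --- where the finite set $F''$ is chosen once and then $R\to\infty$ --- one wants an $L$ that works uniformly, and you pursue a uniform bound by dominating $\sum_{\ell>L(s_0)}c_{\ell,n}$ by the fraction of vertices in components larger than $s_0$, passing to the Poisson model via Theorem~\ref{teo3.1}, and finishing with the $L^1$-convergence of Theorem~\ref{teo4.1}.

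The final step, however, does not close the argument. Fatou only gives $\sum_w s(w)c_w\leq 1$, not equality. Your reduction requires a bound, uniform in large $n$, on
$\mathbb{P}(|C_n(p_1)|>s_0)=1-\sum_{w:\,s(w)\leq s_0}s(w)\,\mathbb{E}\bigl[\mathcal{N}(\mathcal{U}_n;w)\bigr]/n$,
and for each fixed $s_0$ this converges as $n\to\infty$ to $1-\sum_{w:\,s(w)\leq s_0}s(w)c_w$, which as $s_0\to\infty$ tends to $1-\sum_w s(w)c_w$ --- a quantity that Fatou alone cannot show is zero. So ``feeding back'' the convergence of $\sum_{w:\,s(w)>s_0}s(w)c_w$ to $0$ only works if $\sum_w s(w)c_w=1$, i.e.\ if no vertex mass escapes into arbitrarily large (possibly giant) components; in a percolating regime that is exactly what is at issue, and a fixed fraction of vertices may sit in a component whose eigenvalues wander over the index $\ell$ as $n$ grows. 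Your reduction usefully isolates the uniform estimate that is really needed, but the last step requires an independent argument that $\sum_w s(w)c_w=1$, or a separate treatment of the spectral contribution of a giant component, neither of which the cited results supply.
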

	\begin{proof}Let $A\subset\{x_\ell\}_{\ell\in \mathbb{N}} $ be any subset. Then:
	\be\sum_{\ell}\mathbb{E}c_{\ell, n}=\mathbb{E}\sum_{\ell}c_{\ell,n}\leq 1\quad \textrm{and}\quad \sum_{\ell}\mathbb{E}c_{\ell, R}=\mathbb{E}\sum_{\ell}c_{\ell,R}\leq 1.
	\ee
	In particular the two series $\mathbb{E}\sum_{\ell}c_{\ell,n}$ and $\mathbb{E}\sum_{\ell}c_{\ell,R}$ converge and therefore the existence of such $L$ is clear (as the tails of the series must be arbitrarily small).
	\end{proof}

	\begin{cor}For every $\ell\in \mathbb{N}$ there exists constants $c_\ell\geq 0$ such that we have the following convergence of random variables:
	\be c_{\ell, n}\stackrel{L^1}{\longrightarrow} c_\ell\quad \textrm{and}\quad c_{\ell, R}\stackrel{L^1}{\longrightarrow} c_\ell.\ee
	The constants $c_\ell$ are positive if and only if $x_\ell$ belongs to the spectrum of a $\R^m$--geometric graph. Moreover the measure
	\be\label{eq:measure} \mu:=\sum_{\ell\in \mathbb{N}}c_\ell \delta_{x_\ell}\ee
	is a probability measure on $\R$ with support contained in $[0,2]$.
	\end{cor}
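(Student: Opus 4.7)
The plan is to reduce to Theorem \ref{teo4.1} (and Proposition \ref{prop2.1} for the Poisson case), which provide $L^1$ convergence of the normalized count of components of each isotopy type, and to use Lemma \ref{lemg1} to control tails. First I would decompose $c_{\ell,n}$ isotopy-type-by-isotopy-type: letting $m_\ell(w)$ denote the multiplicity of $x_\ell$ as an eigenvalue of $\hat{L}_{\varphi^{(1)}(w)}$ for $w\in\mathcal{G}(M)/{\cong}$, grouping eigenvalues of $\Gamma_n=\sqcup_j\Gamma_{n,j}$ component by component yields
\begin{equation*}
c_{\ell, n} \;=\; \sum_{w\in\mathcal{G}(M)/{\cong}} m_\ell(w)\,\frac{\mathcal{N}(\mathcal{U}_n;w)}{n},
\end{equation*}
and analogously for $c_{\ell, R}$. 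The natural candidate limit is
\begin{equation*}
c_\ell \;:=\; \sum_{w\in\mathcal{G}(\R^m)/{\cong}} m_\ell(w)\, c_w,
\end{equation*}
with $c_w$ the constants from Proposition \ref{prop2.1} (understood to vanish outside $\mathcal{G}(\R^m)/{\cong}$).

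For the $L^1$ convergence I would truncate at $S_K=\{w:|V(\varphi^{(1)}(w))|\leq K\}$: the truncated sum is a finite combination of terms each converging in $L^1$ by Theorem \ref{teo4.1} (using $\vol(M)=1$), and converges to $\sum_{w\in S_K}m_\ell(w)c_w$. Since $c_{\ell,n}\in[0,1]$ uniformly, dominated convergence reduces the full $L^1$ statement to convergence in probability, and the tail $\sum_{w\notin S_K}m_\ell(w)\mathcal{N}(\mathcal{U}_n;w)/n$ is bounded above by $\sum_{w\notin S_K}|V(w)|\mathcal{N}(\mathcal{U}_n;w)/n$, which via the identity $\sum_w|V(w)|\mathcal{N}(\mathcal{U}_n;w)=n$ equals $1-\sum_{w\in S_K}|V(w)|\mathcal{N}(\mathcal{U}_n;w)/n$. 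Combining this with a Scheff\'e-type argument applied to the $L^1$ convergence $b_0(\mathcal{U}_n)/n\to c=\sum_w c_w$ from Theorem \ref{teo4.1} gives uniform-in-$n$ tail control. Making this uniform tail bound rigorous is the main technical obstacle; the analogous estimate in the Poisson case uses Proposition \ref{prop2.1} in place of Theorem \ref{teo4.1}.

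The positivity claim is immediate: since $c_w>0$ precisely on $\mathcal{G}(\R^m)/{\cong}$ (Proposition \ref{prop2.1}), we have $c_\ell>0$ iff some $w\in\mathcal{G}(\R^m)/{\cong}$ satisfies $m_\ell(w)>0$, i.e.\ iff $x_\ell$ belongs to the spectrum of an $\R^m$-geometric graph. To conclude that $\mu$ is a probability measure on $[0,2]$: support in $[0,2]$ follows from $\{x_\ell\}\subset[0,2]$; Fatou gives $\sum_\ell c_\ell\leq\liminf_n\sum_\ell\mathbb{E}[c_{\ell,n}]=1$; for the reverse inequality, for any $\delta>0$ I would invoke Lemma \ref{lemg1} to produce $L$ uniform in $n$ with $\mathbb{E}\sum_{\ell\geq L}c_{\ell,n}<\delta/4$, so that $\sum_{\ell<L}c_\ell=\lim_n\sum_{\ell<L}\mathbb{E}[c_{\ell,n}]\geq 1-\delta/4$; letting $\delta\to 0$ yields $\sum_\ell c_\ell=1$.
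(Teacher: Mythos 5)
Your decomposition $c_{\ell,n}=\sum_w m_\ell(w)\,\mathcal{N}(\mathcal{U}_n;w)/n$ is a genuinely different route from the paper's. The paper does not pass through the isotopy--type decomposition at all: it introduces the spectral counting function $\mathcal{N}(\Gamma,\ell)$ directly and observes that the entire chain Theorem \ref{teosandwich} $\to$ Theorem \ref{teo3.1} $\to$ Corollary \ref{cor3.2} $\to$ Theorem \ref{teo4.1} (resp.\ Proposition \ref{prop2.1}) applies verbatim to \emph{any} ``local'' counting function that assigns a nonnegative integer weight to each connected component, so the $L^1$ convergence of $c_{\ell,n}$ and $c_{\ell,R}$ comes for free. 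The probability-measure claim is handled by citing the abstract argument of \cite[Proposition~6.4]{Antonio} with Lemma \ref{lemg1} substituted for \cite[Lemma~6.3]{Antonio}; your Fatou-plus-Lemma-\ref{lemg1} finish is essentially the same thing unpacked.

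The tail bound you flag as ``the main technical obstacle'' is a genuine gap, and the mechanism you sketch does not close it. Bounding $m_\ell(w)\le|V(w)|$ and using $\sum_w|V(w)|\mathcal{N}(\mathcal{U}_n;w)=n$, your tail becomes $1-\sum_{w\in S_K}|V(w)|\mathcal{N}(\mathcal{U}_n;w)/n$, whose $L^1$-limit over $n$ (for $K$ fixed) is $1-\sum_{w\in S_K}|V(w)|c_w$; sending $K\to\infty$ makes this small only if $\sum_w|V(w)|c_w=1$. That identity fails above the percolation threshold: a positive fraction of the $n$ points lies in a single giant component whose isotopy type changes with $n$, so its vertices are never captured by any fixed $w$ and $\sum_w|V(w)|c_w<1$. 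The Scheff\'e argument you invoke only gives $\sum_w\bigl|\mathbb{E}\,\mathcal{N}(\mathcal{U}_n;w)/n - c_w\bigr|\to 0$, i.e.\ control of the \emph{unweighted} $b_0$-count; it does not upgrade to the $|V(w)|$-weighted sum. Relatedly, it is not automatic that your candidate limit $\sum_w m_\ell(w)c_w$ agrees with the true $L^1$-limit of $c_{\ell,n}$ unless one shows the giant component's spectral multiplicity of $x_\ell$ contributes $o(n)$; this is precisely what the Integral Geometry Sandwich argument, run directly on $\mathcal{N}(\Gamma,\ell)$ as in the paper, is designed to establish, which is why the paper works at the level of the counting function rather than term-by-term over isotopy types.
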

	\begin{proof}The convergence in $L^1$ of the random variales $c_{\ell, n}$ and $c_{\ell, R}$ follows from their description as ``number of components such that $x_\ell$ belongs to the spectrum of the random geometric graph'': in fact for every $\ell\geq 0$ we can introduce the counting function:
	\be \mathcal{N}(\Gamma, \ell)=\#\{\textrm{components of  $\Gamma$ for which $x_\ell$ belongs to their spectrum}\}.\ee
	With this notation we have:
	\be c_{\ell, n}=\frac{\mathcal{N}(\Gamma_n,\ell) }{n}\quad \textrm{and}\quad c_{\ell, R}=\frac{\mathcal{N}(\Gamma_R,\ell) }{\textrm{vol}(B(0,R))}=\frac{\mathcal{N}(\Gamma_R,\ell) }{\#(V(\Gamma_R))}\cdot \frac{\#(V(\Gamma_R))}{\textrm{vol}(B(0,R))}.\ee
	The convergence of $c_{\ell, n}$ follows the exact same proof of Theorem \ref{teo4.1} and the convergence of $c_{\ell, R}$ proceeds as in Proposition \ref{prop2.1}. The measure $\mu$ is well defined and the fact that it is a probability measure follows from the same proof as \cite[Proposition 6.4]{Antonio}, using the above Lemma \ref{lemg1} as a substitute of \cite[Lemma 6.3]{Antonio}.
	\end{proof}
	\begin{theorem}\label{teomu}
		Let $\mu\in \mathcal{M}^1([0,2])$ be the measure defined in \eqref{eq:measure}. Then
		\begin{equation*}
		\mu_n\overset{*}{\underset{n\rightarrow\infty}{\rightharpoonup}}\mu\qquad \text{and}\qquad
		\mu_R\overset{*}{\underset{R\rightarrow\infty}{\rightharpoonup}}\mu.
		\end{equation*}
		i.e. $\forall f\in C^0([0,2],\mathbb{R})$ we have $\mathbb{E}\int f\diff \mu_n\rightarrow\int f\diff\mu$ and $\mathbb{E}\int f\diff \mu_R\rightarrow\int f\diff\mu$
	
	\end{theorem}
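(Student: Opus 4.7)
\medskip

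\noindent\textbf{Proof proposal.} Since $f\in C^0([0,2],\mathbb{R})$ is bounded, set $M:=\sup_{[0,2]}|f|$. Both $\mu_n$ and $\mu$ are purely atomic measures supported on the countable set $\{x_\ell\}_{\ell\in\mathbb{N}}$, so I can compute explicitly
\begin{equation*}
\mathbb{E}\int f\,d\mu_n=\sum_{\ell\in\mathbb{N}}\mathbb{E}(c_{\ell,n})\,f(x_\ell)
\quad\text{and}\quad
\int f\,d\mu=\sum_{\ell\in\mathbb{N}}c_\ell\,f(x_\ell),
\end{equation*}
and identically for the Poisson model with $c_{\ell,R}$. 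The plan is therefore to control the two series by a head--tail splitting, with the tail handled uniformly in $n$ (resp.\ $R$) by Lemma \ref{lemg1} and the head handled by the $L^1$--convergence $c_{\ell,n}\to c_\ell$ (resp.\ $c_{\ell,R}\to c_\ell$) established in the corollary immediately preceding the theorem.

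More precisely, fix $\varepsilon>0$. First I would apply Lemma \ref{lemg1} with $\delta=\varepsilon/M$ to obtain an integer $L=L(\varepsilon,f)$ such that $\mathbb{E}\sum_{\ell\geq L}c_{\ell,n}<\varepsilon/(4M)$ for every $n$, and similarly for $c_{\ell,R}$. Since the $L^1$--convergence $c_{\ell,n}\to c_\ell$ implies $\mathbb{E}(c_{\ell,n})\to c_\ell$, passing to the limit in the inequality $\sum_{\ell<L'}\mathbb{E}(c_{\ell,n})\leq 1$ for any $L'$ and then letting $L'\to\infty$ also gives $\sum_{\ell\geq L}c_\ell<\varepsilon/(4M)$ (up to enlarging $L$ if necessary, using that $\mu$ is a probability measure so its tail is summable to $1$). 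Then for every $n$ I estimate
\begin{equation*}
\left|\mathbb{E}\int f\,d\mu_n-\int f\,d\mu\right|
\leq \sum_{\ell<L}\bigl|\mathbb{E}(c_{\ell,n})-c_\ell\bigr|\,|f(x_\ell)|
+ M\!\sum_{\ell\geq L}\mathbb{E}(c_{\ell,n})
+ M\!\sum_{\ell\geq L}c_\ell,
\end{equation*}
and the last two terms are together bounded by $\varepsilon/2$.

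For the head, which is a \emph{finite} sum of at most $L$ terms, I invoke the $L^1$--convergence $c_{\ell,n}\stackrel{L^1}{\longrightarrow}c_\ell$, which gives in particular $\mathbb{E}(c_{\ell,n})\to c_\ell$ for each $\ell$. Choosing $n$ large enough so that $|\mathbb{E}(c_{\ell,n})-c_\ell|<\varepsilon/(2LM)$ for every $\ell<L$, the head contributes at most $\varepsilon/2$. Combining, $|\mathbb{E}\int f\,d\mu_n-\int f\,d\mu|<\varepsilon$ for all $n$ large enough, which is the definition of weak-$\ast$ convergence in expectation. The argument for $\mu_R$ is word-for-word the same, using $c_{\ell,R}\stackrel{L^1}{\longrightarrow}c_\ell$ and the $R$--version of Lemma \ref{lemg1}.

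The only point that requires any care, and which I would consider the main obstacle, is the \emph{uniformity in $n$} of the tail bound: without the uniform estimate from Lemma \ref{lemg1} one cannot justify exchanging the limit with the infinite sum over $\ell$. Fortunately the lemma provides exactly this uniformity, so the rest of the proof is essentially a dominated-convergence-style truncation. No further ingredient beyond the material already developed for the isotopy-class counting functions is needed.
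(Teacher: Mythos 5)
Your proposal is correct and matches the paper's proof in both strategy and details: you perform the same head--tail decomposition, invoke Lemma \ref{lemg1} to control the tail uniformly, and use the $L^1$-convergence $c_{\ell,n}\to c_\ell$ (equivalently, convergence of expectations) on the finite head. The only cosmetic difference is that you describe the head as ``indices below a threshold $L$'' and enlarge $L$ once, whereas the paper uses two finite index sets $F$ and $F'$ (for the $\mathbb{E}c_{\ell,R}$-tail and the $c_\ell$-tail respectively) and takes their union.
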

	\begin{proof}
	The proof of the statement for $\mu_n$ and $\mu_R$ is the same: we do it for $\mu_R$. Let $f\in C^{0}([0,2], \R)$, and fix $\varepsilon>0$. Apply Lemma \ref{lemg1} with the choice of $\delta=\varepsilon/\sup|f|$, and get the corresponding set $F=\{\ell_1, \ldots, \ell_a\}\subset \mathbb{N}$. Also, from the convergence of the series $\sum c_\ell$ we get the existence of a finite set $F'$ such that $\sum_{\ell\notin F}c_\ell<\varepsilon/\sup|f|$. Define $F''=F\cup F''$ (this is still a finite set) and:
	\begin{align} 
	\left|\mathbb{E}\int_{[0,2]}f d\mu_R-\int_{[0,2]}fd\mu\right|=&\left| \mathbb{E}\sum_\ell c_{\ell,R}f(x_\ell)-\sum_{\ell}c_{\ell}f(x_\ell)\right|\\
	=&\left|\mathbb{E}\sum_{\ell\in F''} c_{\ell,R}f(x_\ell)+\mathbb{E}\sum_{\ell\notin F''} c_{\ell,R}f(x_\ell)-\sum_{\ell\in F''} c_{\ell}f(x_\ell)-\sum_{\ell\notin F''} c_{\ell}f(x_\ell)\right|\\
	\leq &\left|\mathbb{E}\sum_{\ell\in F''} c_{\ell,R}f(x_\ell)-\sum_{\ell\in F''} c_{\ell}f(x_\ell)\right|\\
	&+\left|\mathbb{E}\sum_{\ell\notin F''} c_{\ell,R}f(x_\ell)\right|+\left|\sum_{\ell\notin F''} c_{\ell}f(x_\ell)\right|\\
	\leq &\left|\sum_{\ell\in F''} \mathbb{E}(c_{\ell,R}-c_\ell)f(x_\ell)\right|+\sup|f|\cdot\left| \sum_{\ell\notin F''} \mathbb{E}c_{\ell,R}\right|+\sup|f|\cdot\left| \sum_{\ell\notin F''} c_{\ell}\right|\\
	\leq &\sup|f|\sum_{\ell\in F''} \mathbb{E}|c_{\ell,R}-c_\ell|+\sup|f|\frac{\delta}{4}+\sup|f|\frac{\delta}{4}\\
	\label{eq:epsilon}\leq&\mathbb{E}|c_{\ell,R}-c_\ell|+\frac{\varepsilon}{2}\longrightarrow \frac{\varepsilon}{2}\quad \textrm{as $R\to \infty$},
	\end{align}
where in the last line we have used the $L^1$ convergence of $c_{\ell, R}\to c_\ell$. Since this is true for every $\varepsilon>0$, it follows that:
\be \lim_{R\to \infty}\mathbb{E}\int_{[0,2]}fd\mu_R=\int_{[0,2]}d\mu.\ee
	\end{proof}
	\begin{prop}\label{prop:last}
	The measure $\mu$ appearing in Theorem \ref{teomu} has the following properties:
		\begin{enumerate}
			\item $\mu$ is not absolutely continuous with respect to Lebesgue;
			\item $\lim_{n\rightarrow\infty}\frac{1}{n}\cdot\mathbb{E}\#\bigl(\text{eigenvalues of $\Gamma_n$ in } [a,b]\bigr)=\mu([a,b])>0$;
			\item $\mu(\{0\})=\beta$.
		\end{enumerate}
	\end{prop}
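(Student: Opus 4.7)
The plan is to establish (3) first, deduce (1) from it as a trivial corollary, and then prove (2) by a tail/truncation argument that mirrors the proof of Theorem \ref{teomu}.

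First I would prove (3). The key observation is standard: the multiplicity of the eigenvalue $0$ of the normalized Laplace operator $\hat{L}$ of a graph equals its number of connected components. Applied to $\Gamma_n$ this gives $n\cdot \mu_n(\{0\})=b_0(\Gamma_n)$, hence
\[
\mathbb{E}\,\mu_n(\{0\})=\frac{1}{n}\,\mathbb{E}\,b_0(\Gamma_n)\xrightarrow[n\to \infty]{}\beta
\]
thanks to the $L^1$-convergence of $b_0(\Gamma_n)/n$ recalled in the introduction (equivalently, Theorem \ref{teo4.1} applied without restrictions on the isotopy type). On the other hand, letting $\ell_0$ be the unique index with $x_{\ell_0}=0$, the atomic decomposition gives $\mu_n(\{0\})=c_{\ell_0, n}$, while the corollary preceding Theorem \ref{teomu} yields $\mathbb{E}\,c_{\ell_0, n}\to c_{\ell_0}=\mu(\{0\})$. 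Comparing the two limits produces $\mu(\{0\})=\beta$.

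Property (1) is then immediate: since $\beta>0$, the measure $\mu$ carries positive mass on the singleton $\{0\}$, hence it cannot be absolutely continuous with respect to Lebesgue measure. For (2) I would imitate the proof of Theorem \ref{teomu}, using the characteristic function $\chi_{[a,b]}$ as a test function but exploiting that we take expectation first. From $\mu_n=\sum_\ell c_{\ell, n}\delta_{x_\ell}$ one has
\[
\frac{1}{n}\,\mathbb{E}\,\#\{\text{eigenvalues of }\Gamma_n\text{ in }[a,b]\}=\mathbb{E}\,\mu_n([a,b])=\sum_{\ell:\,x_\ell\in[a,b]}\mathbb{E}\,c_{\ell, n}.
\]
Given $\varepsilon>0$, I would apply Lemma \ref{lemg1} to obtain an integer $L$ with $\mathbb{E}\sum_{\ell\geq L}c_{\ell, n}<\varepsilon$ uniformly in $n$, and enlarge $L$ if needed, using convergence of $\sum c_\ell$, so that also $\sum_{\ell\geq L}c_\ell<\varepsilon$. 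Splitting the sum into $\ell<L$ (a finite sum to which the termwise $L^1$-convergence $\mathbb{E}\,c_{\ell, n}\to c_\ell$ established before Theorem \ref{teomu} applies) and $\ell\geq L$ (bounded in absolute value by $2\varepsilon$) and letting $\varepsilon\to 0$ yields $\mathbb{E}\,\mu_n([a,b])\to \mu([a,b])$. The positivity $\mu([a,b])>0$ then follows from (3) whenever $0\in[a,b]$, and more generally whenever $[a,b]$ contains some $x_\ell$ that appears in the spectrum of an $\R^m$-geometric graph, since this forces $c_\ell>0$ by the earlier corollary identifying the strictly positive atoms.

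The main obstacle I anticipate is simply the verification that the truncation argument for (2) really only requires Lemma \ref{lemg1} together with the pointwise $L^1$-convergence of the coefficients $c_{\ell, n}$. This works precisely because we take expectation before evaluating on the indicator, turning the convergence question into one about absolutely convergent series of real numbers rather than about passing weak-$*$ convergence to discontinuous functionals; the latter would in general require an atom-free assumption at the endpoints $a,b$ that manifestly fails here by the nature of $\mu$.
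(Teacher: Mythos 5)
Your proof follows the same route as the paper: establish (3) from the identification of the zero eigenspace of the normalized Laplacian with the connected components (the paper works with $\Gamma_R$ and $c_{\ell_0,R}$, you work with $\Gamma_n$ and $c_{\ell_0,n}$, which in fact ties more directly to the introduction's definition of $\beta$), deduce (1) immediately from $\beta>0$, and obtain (2) by rerunning the truncation argument of Theorem \ref{teomu} with $\chi_{[a,b]}$ in place of the continuous test function $f$, noting that only boundedness was used. A minor improvement on your side: you correctly flag that the asserted positivity $\mu([a,b])>0$ requires $[a,b]$ to meet the set of atoms $\{x_\ell : c_\ell>0\}$ (which it does for any nondegenerate subinterval of $[0,2]$ since these atoms are dense), a point the paper's proof leaves implicit.
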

	
	\begin{proof}We start with point (3): we have that $\mu(\{0\})=c_{\ell_0}$ where $0=x_{\ell_0}$ and $c_{\ell_0}$ is the $L^1$-limit of $c_{\ell_0, R}$, which is the random variable:
	\be c_{\ell_0, R}=\#\{\textrm{components of $\Gamma_R$ containing $0$ in their spectrum}\}=b_0(\Gamma_R),\ee
	and therefore (3) follows from the definition of $\beta$.
	
	Point (1) also follows immediately, since $\beta_0>0$ and $\mu$ charges positively sets of Lebesgue measure zero (hence it cannot be absolutely continuous with respect to Lebesghe measure). 
	
	For the proof of point (2) we argue exactly as in the proof of Theorem \ref{teomu}, by replacing $f$ with $\chi_{[a,b]}$ (the characteristic function of the interval $[a,b]$) and observing that the only property of $f$ that we have used is its boundedness.
	\end{proof}
		\bibliographystyle{alpha}
	\bibliography{rgg20.03.08}	
\end{document}